\theoremstyle{plain}
\newtheorem{theorem}{Theorem}
\newtheorem{corollary}[theorem]{Corollary}
\newtheorem{lemma}[theorem]{Lemma}
\newtheorem{proposition}[theorem]{Proposition}
\theoremstyle{definition}
\theoremstyle{remark}
\newtheorem{example}[theorem]{Example}
\newtheorem{remark}[theorem]{Remark}
\numberwithin{equation}{section}
\numberwithin{theorem}{section}
\begin{document}
\title{On composition-closed classes of Boolean functions}
\author{Tam\'{a}s Waldhauser}
\address{Mathematics Research Unit\\
University of Luxembourg\\
6 rue Richard Coudenhove-Kalergi\\
L-1359 Luxembourg, Luxembourg, and\\
Bolyai Institute\\
University of Szeged\\
Aradi v\'{e}rtan\'{u}k tere 1\\
H-6720 Szeged, Hungary}
\email{twaldha@math.u-szeged.hu}

\begin{abstract}
We determine all composition-closed equational classes of Boolean functions.
These classes provide a natural generalization of clones and iterative
algebras: they are closed under composition, permutation and identification
(diagonalization) of variables and under introduction of inessential variables
(cylindrification), but they do not necessarily contain projections. Thus the
lattice formed by these classes is an extension of the Post lattice. The
cardinality of this lattice is continuum, yet it is possible to describe its
structure to some extent.

\end{abstract}
\maketitle

\section{Introduction\label{sect intro}}

The goal of this paper is to describe composition-closed equational classes of
Boolean functions not necessarily containing projections, thereby generalizing
Post's description of Boolean clones. First we recall the definition of a
clone, and then we give an informal overview of the problem that we consider.
For formal definitions and more background see
Section~\ref{sect preliminaries} and \cite{Lau,PK}.

We define the \emph{composition} of an $n$-ary function $f\colon
A^{n}\rightarrow A$ by the $k$-ary functions $g_{1},\ldots,g_{n}\colon
A^{k}\rightarrow A$ as the $k$-ary function $f\left(  g_{1},\ldots
,g_{n}\right)  $ given by%
\begin{equation}
f\left(  g_{1},\ldots,g_{n}\right)  \left(  \mathbf{a}\right)  =f\left(
g_{1}\left(  \mathbf{a}\right)  ,\ldots,g_{n}\left(  \mathbf{a}\right)
\right)  \text{ for all }\mathbf{a}\in A^{k}\text{.} \label{eq comp}%
\end{equation}
We say that $f$ is the \emph{outer function} of the composition, and
$g_{1},\ldots,g_{n}$ are the \emph{inner functions}. A \emph{clone} on the set
$A$ is a class $\mathcal{C}\subseteq\bigcup_{n\geq1}A^{A^{n}}$ of finitary
functions that is closed under composition and contains the \emph{projections}%
\[
e_{i}^{\left(  n\right)  }\colon A^{n}\rightarrow A,\left(  x_{1},\ldots
,x_{n}\right)  \mapsto x_{i}\quad\left(  n\in\mathbb{N}\hspace{0cm},1\leq
i\leq n\right)  .
\]
(Here, and in the rest of the paper, $\mathbb{N}=\left\{  1,2,\ldots\right\}
$, i.e., we exclude $0$ from the set of natural numbers.)

Although the above definition of composition is restrictive in the sense that
the inner functions must have the same arity, by making use of projections one
can see that clones are closed under compositions without restrictions on the
arities. For example, let us suppose that $f$ is a ternary function in a clone
$\mathcal{C}$, and $g_{1},g_{2},g_{3}$ are unary, binary, ternary functions in
$\mathcal{C}$, respectively. If we would like to build the composite function
$h\left(  x_{1},x_{2},x_{3}\right)  =f\left(  g_{1}\left(  x_{1}\right)
,g_{2}\left(  x_{2},x_{1}\right)  ,g_{3}\left(  x_{1},x_{1},x_{3}\right)
\right)  $ using only compositions of the form (\ref{eq comp}), then we could
proceed as follows: first construct ternary functions $g_{1}^{\prime}%
,g_{2}^{\prime},g_{3}^{\prime}\in\mathcal{C}$ with the help of the
projections:%
\begin{align*}
g_{1}^{\prime}  &  =g_{1}(e_{1}^{\left(  3\right)  }),\\
g_{2}^{\prime}  &  =g_{2}(e_{2}^{\left(  3\right)  },e_{1}^{\left(  3\right)
}),\\
g_{3}^{\prime}  &  =g_{3}(e_{1}^{\left(  3\right)  },e_{1}^{\left(  3\right)
},e_{3}^{\left(  3\right)  }),
\end{align*}
and then form the composition $h=f\left(  g_{1}^{\prime},g_{2}^{\prime}%
,g_{3}^{\prime}\right)  \in\mathcal{C}$.

As we can see from the above example, composing a function with projections
allows us to add dummy variables to the function (see $g_{1}^{\prime}$), to
permute the variables of the function (see $g_{2}^{\prime}$) and to identify
variables of the function (see $g_{3}^{\prime}$). Function classes closed
under the latter three operations are called \emph{equational classes}, since
they can be defined by functional equations (see
Subsection~\ref{subsect equational classes}). The above discussion shows that
every clone is an equational class, but, as we shall see in
Subsection~\ref{subsect equational classes}, there are equational classes that
are not clones.

All clones on a given finite base set $A$ form an algebraic lattice. This
\emph{clone lattice} has continuum cardinality if $\left\vert A\right\vert
\geq3$ (see \cite{JM}), and it seems to be a very hard problem to describe its
structure. The case $\left\vert A\right\vert =2$, i.e., the case of Boolean
functions was settled by E.~L.~Post, who described all clones of Boolean
functions in \cite{Post}.\ There are countably many such clones, and their
lattice is known as the Post lattice (see Figure~\ref{fig post}).

We will generalize the notion of a clone by considering function classes that
are closed under composition (in the sense of (\ref{eq comp})) but do not
necessarily contain the projections. However, we would like to be able to
identify and permute variables and introduce dummy variables, therefore we
only consider composition-closed equational classes. These classes form a
complete lattice that contains the clone lattice as the principal filter
generated by the clone of projections (see Figure~\ref{fig biglattice}). The
main result of this paper is a description of this lattice over a two-element
base set. Although the clone lattice is countable in this case, we will see
that the lattice of composition-closed equational classes of Boolean functions
is uncountable.

Composition-closed equational classes subsume iterative algebras as well. A
function class $\mathcal{K}$ is an \emph{iterative algebra} if $f\left(
g_{1},\ldots,g_{n}\right)  \in\mathcal{K}$, whenever $f\in\mathcal{K}$ and
$g_{i}\in\mathcal{K}\cup\left\{  \text{projections}\right\}  \mathcal{\ }$for
$i=1,2,\ldots,n$. Clearly, every iterative algebra is a composition-closed
equational class, and an iterative algebra is a clone iff it contains the projections.

The difference between composition-closed equational classes and iterative
algebras can be best understood by visualizing compositions as trees. As an
example, let us consider the following two compositions:

\begin{center}%
\raisebox{-0cm}{\includegraphics[
height=4.6568cm,
width=12.8372cm
]%
{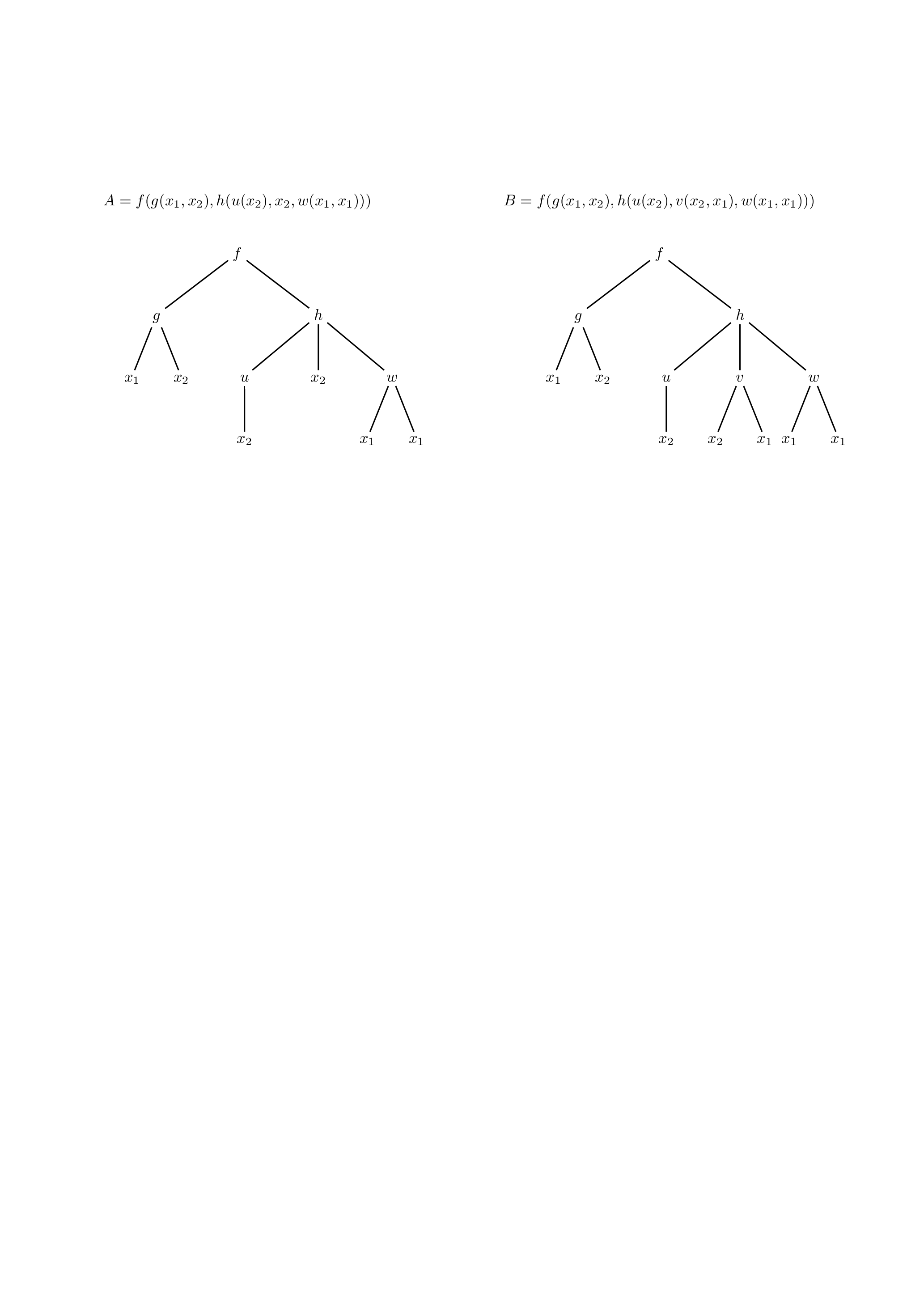}%
}%

\end{center}

\noindent If $\mathcal{K}$ is an iterative algebra and $f,g,h,u,v,w\in
\mathcal{K}$, then both $A$ and $B$ must belong to $\mathcal{K}$. However, if
$\mathcal{K}$ is only assumed to be a composition-closed equational class,
then $A$ does not necessarily belong to $\mathcal{K}$, as it involves the
composition $h\left(  u\left(  x_{2}\right)  ,x_{2},w\left(  x_{1}%
,x_{1}\right)  \right)  $, where one inner function is a projection. This
problem does not arise with $B$, hence $B\in\mathcal{K}$ is guaranteed,
whenever $\mathcal{K}$ is a composition-closed equational class.

In general, we can say that composition-closed equational classes are closed
under compositions whose tree satisfies the following condition: for any
internal node, either all or none of its children are leaves. As an exercise
in handling such compositions, we invite the reader to verify the following
fact: The clone generated by the addition operation of a field (or, more
generally, of any additive commutative semigroup) consists of functions of the
form $\sum a_{i}x_{i}$~$\left(  a_{i}\in\mathbb{N}\right)  $. The iterative
algebra generated by addition contains only those such functions where $\sum
a_{i}\geq2$, while the composition-closed equational class generated by
addition contains only those where $\sum a_{i}$ is even.

The paper is organized as follows: In Section \ref{sect preliminaries} we
present the necessary background on equational classes, Boolean clones, and
Galois connections between functions and relations. In Section
\ref{sect idempotents vs clones} we make some basic observations about
composition-closed equational classes of Boolean functions, and we outline a
strategy for constructing all of them. In Section \ref{sect easy} we carry out
this strategy for the easy cases, and then we deal with the harder cases in
Sections \ref{sect W_infty} and \ref{sect W_k}. Finally, in Section
\ref{sect concluding} we put together all the information we found to get a
picture about the lattice of composition-closed equational classes of Boolean functions.

\section{Preliminaries\label{sect preliminaries}}

\subsection{Subfunctions\label{subsect subfunctions}}

Let $f$ and $g$ be operations on a set $A$ of arity $n$ and $m$, respectively.
If there exists a map $\sigma\colon\left\{  1,2,\ldots,n\right\}
\rightarrow\left\{  1,2,\ldots,m\right\}  $ such that%
\[
g\left(  x_{1},\ldots,x_{m}\right)  =f\left(  x_{\sigma\left(  1\right)
},\ldots,x_{\sigma\left(  n\right)  }\right)  ,
\]
then we say that $g$ is a \emph{subfunction} (or identification minor or
simple variable substitution) of $f$, and we denote this fact by $g\preceq f$.
If $\sigma$ is bijective, then $g$ is obtained from $f$ by permuting
variables; if $\sigma$ is not injective, then $g$ is obtained from $f$ by
identifying variables; if $\sigma$ is not surjective, then $g$ is obtained
from $f$ by introducing inessential (dummy) variables.

The subfunction relation gives rise to a quasiorder on the set of all finitary
functions on $A$ (see \cite{quasi-ordering}). The corresponding
\emph{equivalence} is defined by $f\equiv g\iff f\preceq g$ and $g\preceq f$,
and it is clear that $f$ and $g$ are equivalent iff they differ only in
inessential variables and/or in the order of their variables. We will not
distinguish between equivalent functions in the sequel. For example, we will
denote the set of all constant zero functions (for any $0\in A$) simply by
$\left\{  0\right\}  $, and $\left\{  \operatorname{id}\right\}  $ will stand
for the set of all projections, as these are the functions equivalent to the
identity function.

Let $\Omega$ denote the class of all\ Boolean functions, i.e., the class of
finitary operations on $A=\left\{  0,1\right\}  $. The subfunction relation
induces naturally a partial order on $\Omega/\equiv$; the bottom of this poset
is shown in Figure~\ref{fig subf}. We can see (and it is easy to prove) that
it has four connected components, namely $\Omega_{00},\Omega_{11},\Omega
_{01},\Omega_{10}$, where%
\[
\Omega_{ab}=\left\{  f\in\Omega:f\left(  \mathbf{0}\right)  =a,\,f\left(
\mathbf{1}\right)  =b\right\}  \quad\left(  a,b\in\left\{  0,1\right\}
\right)  .
\]
Let us observe that $\Omega_{01}$ is nothing else but the clone of idempotent
functions. For an arbitrary function class $\mathcal{K}$, we will abbreviate
$\mathcal{K}\cap\Omega_{ab}$ by $\mathcal{K}_{ab}$, and we will later use the
following (hopefully intuitive) notation as well:%
\[
\mathcal{K}_{0\ast}=\mathcal{K}_{00}\cup\mathcal{K}_{01},~\mathcal{K}_{\ast
1}=\mathcal{K}_{01}\cup\mathcal{K}_{11},~\mathcal{K}_{=}=\mathcal{K}_{00}%
\cup\mathcal{K}_{11}.
\]

\begin{figure}
[ptb]
\begin{center}
\includegraphics[
height=3.082cm,
width=12.6322cm
]%
{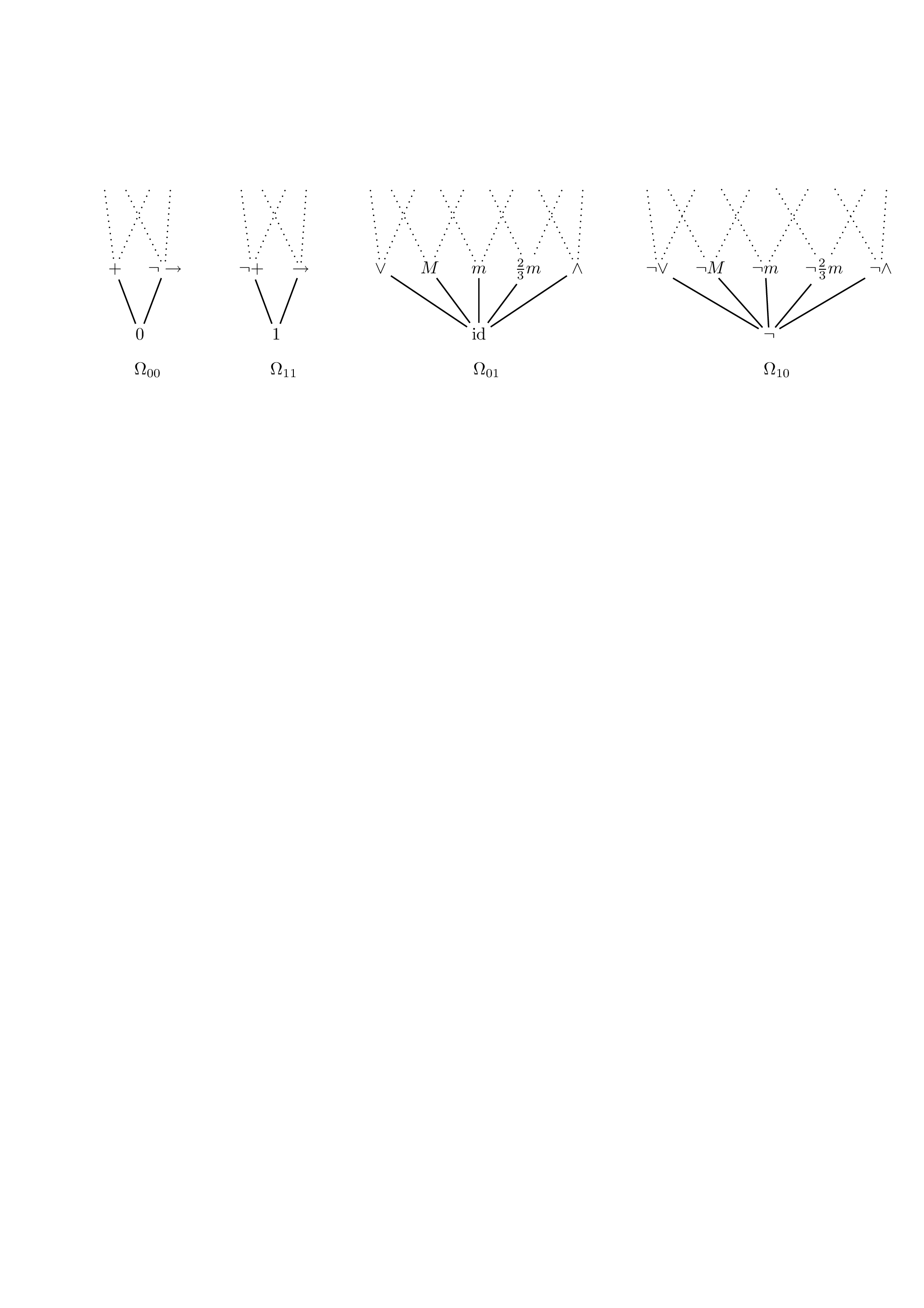}%
\caption{The subfunction quasiorder on Boolean functions}%
\label{fig subf}%
\end{center}
\end{figure}
The minimal elements of $\left(  \Omega/\equiv;\preceq\right)  $ are the unary
functions: $0,1,\operatorname{id}$ and $\lnot$ (negation). On the next level
we can see the binary operations $+$ (addition modulo $2$), $\rightarrow$
(implication), $\vee$ (disjunction), $\wedge$ (conjunction) and the ternary
functions $M$ (majority operation), $m$ (minority operation), $\frac{2}{3}m$
($\frac{2}{3}$-minority operation, see \cite{BurSan}) together with their
negations. Here negation is \textquotedblleft taken from
outside\textquotedblright, e.g., $\lnot\frac{2}{3}m$ is a shorthand notation
for the function $\lnot\frac{2}{3}m\left(  x,y,z\right)  =1+\frac{2}%
{3}m\left(  x,y,z\right)  =1+xy+yz+xz+x+z$.

\subsection{Equational classes\label{subsect equational classes}}

A class $\mathcal{K}$ of operations on a set $A$ is an \emph{equational class}
if it is an order ideal in the subfunction quasiorder, i.e., if $f\in
\mathcal{K}$ and $g\preceq f$ imply $g\in\mathcal{K}$. The denomination is
explained by the fact that these are exactly the classes that can be defined
by functional equations \cite{CF,EFHH}. Clearly every clone is an equational
class, but there are other equational classes as well; a natural example is
the class of antimonotone (order reversing) Boolean functions, which can be
defined by the functional equation $f\left(  \mathbf{x}\wedge\mathbf{y}%
\right)  \wedge f\left(  \mathbf{x}\right)  =f\left(  \mathbf{x}\right)  $.
Another example is the class $\Omega_{=}$ (see
Subsection~\ref{subsect subfunctions}), which can be defined by the equation
$f\left(  \mathbf{0}\right)  =f\left(  \mathbf{1}\right)  $. Equational
classes can be characterized by relational constraints as well; we will
discuss this in more detail in Subsection~\ref{subsect relational constarints}.

The equational classes on a given set $A$ form a lattice $\mathbf{E}_{A}$ with
intersection and union as the lattice operations. This lattice has continuum
cardinality already on the two-element set, and its structure is very
complicated \cite{quasi-ordering}. If $\mathcal{A}$ and $\mathcal{B}$ are
classes of functions, then their \emph{composition}, denoted by $\mathcal{A}%
\circ\mathcal{B}$, is the set of all compositions where the outer function
belongs to $\mathcal{A}$ and the inner functions belong to $\mathcal{B}$:%
\[
\mathcal{A}\circ\mathcal{B}=\left\{  f\left(  g_{1},\ldots,g_{n}\right)
:f\in\mathcal{A},~g_{1},\ldots,g_{n}\in\mathcal{B}\right\}  .
\]
In general, associativity does not hold for function class composition, but it
holds for equational classes \cite{C,composition of post classes}, hence we
obtain a monoid $\left(  \mathbf{E}_{A};\circ\right)  $ with the identity
element $\left\{  \operatorname{id}\right\}  $.

We will mostly consider Boolean functions, and in this case we will drop the
index $A$, and denote the set of equational classes simply by $\mathbf{E}$. A
class $\mathcal{K}$ is \emph{closed under composition} iff $\mathcal{K}%
\circ\mathcal{K}\subseteq\mathcal{K}$. As we shall see in Proposition
\ref{prop (sub)idempotents}, this is equivalent to the formally stronger
requirement that $\mathcal{K}$ is \emph{idempotent}, i.e., $\mathcal{K}%
\circ\mathcal{K}=\mathcal{K}$ (cf.~\cite{C}). (Let us note that this is a
distinguishing feature of Boolean functions: if $A$ has at least three
elements, then one can construct a class $\mathcal{K}\in\mathbf{E}_{A}$ such
that $\mathcal{K}\circ\mathcal{K}\subsetneq\mathcal{K}$.) The goal of this
paper is to describe the idempotent elements of $\left(  \mathbf{E}%
;\circ\right)  $.

The usual notation for the set of idempotents of a semigroup $\mathbf{S}$ is
$E\left(  \mathbf{S}\right)  $, but in our case this would lead to the
somewhat awkward notation $E\left(  \mathbf{E}\right)  $, therefore we will
simply write $\mathbf{I}$ for the set of composition-closed equational classes
over $\left\{  0,1\right\}  $. Clearly $\mathbf{I}$ is closed under arbitrary
intersections (we allow the empty class), hence it is a complete lattice. The
lattice of clones appears in $\mathbf{I}$ as the principal filter generated by
$\left\{  \operatorname{id}\right\}  $, and we will see that the rest of
$\mathbf{I}$ is the principal ideal generated by $\Omega_{=}$ (see
Figure~\ref{fig biglattice}). We will also see that the lattice $\mathbf{I}$
has continuum cardinality.

\subsection{The Post lattice\label{subsect Post lattice}}

There are countably many clones on the two-element set, and these have been
described by E.~L.~Post in \cite{Post}. Figure~\ref{fig post} shows the clone
lattice on $\left\{  0,1\right\}  $, usually referred to as the \emph{Post
lattice}. The top element is $\Omega$, the class of all Boolean functions, and
the bottom element is $\left\{  \operatorname{id}\right\}  $, the clone
consisting of projections only. The other clones labelled in the figure are
the following:%
\begin{figure}
[ptb]
\begin{center}
\includegraphics[
height=11.5666cm,
width=12.6322cm
]%
{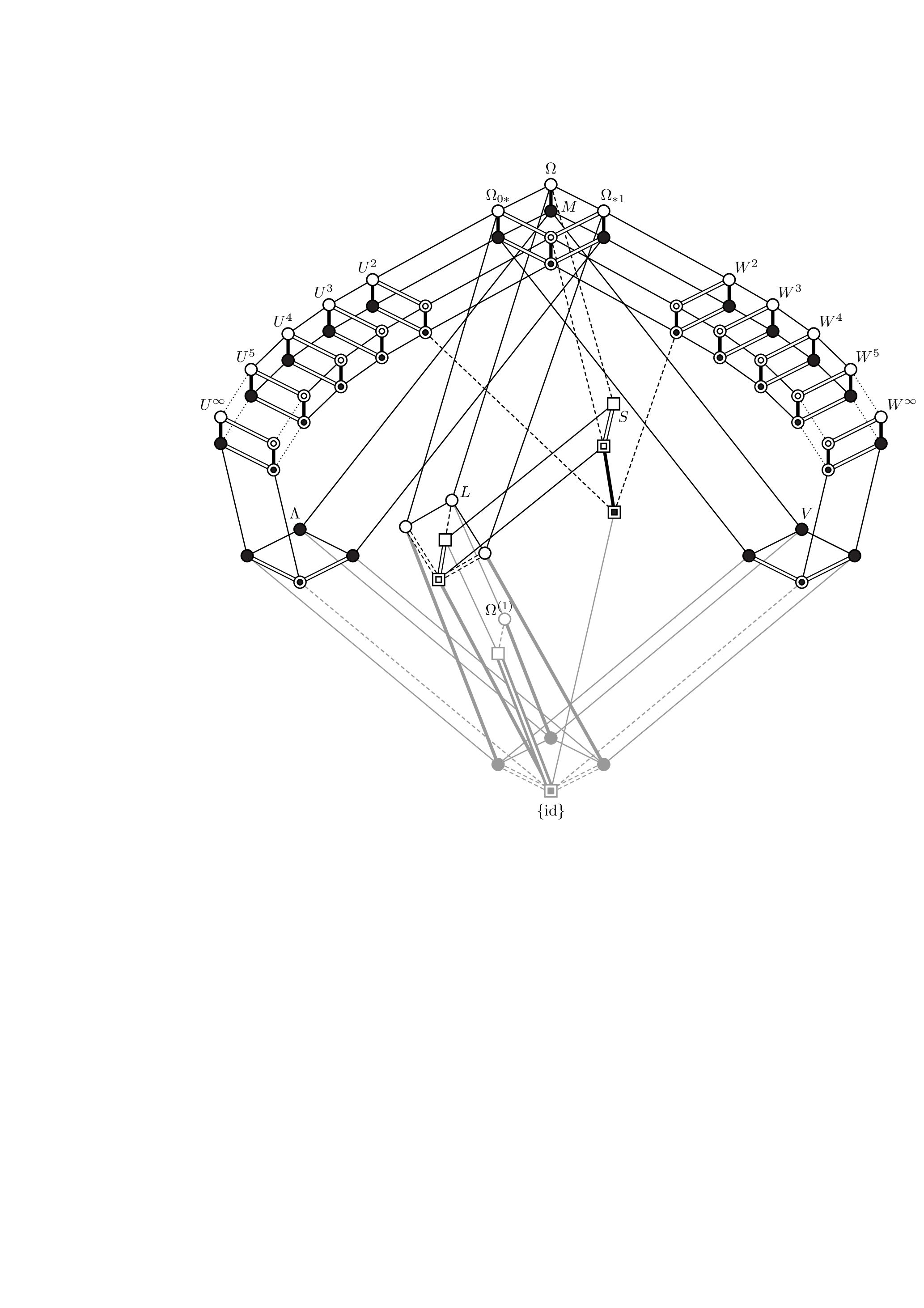}%
\caption{The Post lattice}%
\label{fig post}%
\end{center}
\end{figure}

\begin{itemize}
\item $\Omega_{0\ast}$ is the clone of $0$-preserving functions;

\item $\Omega_{\ast1}$ is the clone of $1$-preserving functions;

\item $M$ is the clone of monotone (order preserving) functions;

\item $S$ is the clone of self-dual functions, i.e., functions satisfying
\[
\lnot f\left(  \lnot x_{1},\ldots,\lnot x_{n}\right)  =f\left(  x_{1}%
,\ldots,x_{n}\right)  ;
\]

\item $L$ is the clone of linear functions, i.e., functions of the form
$x_{1}+\cdots+x_{n}+c$ with $n\geq0,c\in\left\{  0,1\right\}  $;

\item $\Lambda$ consists of conjunctions $x_{1}\wedge\cdots\wedge x_{n}%
$~$\left(  n\in\mathbb{N}\hspace{0cm}\right)  $ and the two constants $0,1$;

\item $V$ consists of disjunctions $x_{1}\vee\cdots\vee x_{n}~\left(
n\in\mathbb{N}\hspace{0cm}\right)  $ and the two constants $0,1$;

\item $\Omega^{\left(  1\right)  }$ is the clone of essentially at most unary functions;

\item $W^{k}$ is the clone of functions preserving the relation $\left\{
0,1\right\}  ^{k}\setminus\left\{  \mathbf{0}\right\}  $; it can be generated,
e.g., by the function $w_{k}$ of arity $k+2$ defined by\footnote{The lower
covers of $W^{k}$ in the Post lattice are $W^{k}\cap M,W^{k}\cap\Omega_{01}$
and $W^{k+1}$. Therefore, in order to verify that $w_{k}$ generates $W^{k}$,
it suffices to check that $w_{k}\in W^{k}$ and $w_{k}$ is neither monotone nor
idempotent, nor does it belong to $W^{k+1}$.}%
\[
w_{k}\left(  x_{1},\ldots,x_{k+2}\right)  =\left\{  \!\!%
\begin{array}
[c]{ll}%
0, & \text{if }\left\vert \left\{  i:x_{i}=1\right\}  \right\vert =2\text{ and
}x_{1}=1;\\
1, & \text{otherwise.}%
\end{array}
\right.
\]

\item $W^{\infty}=W^{2}\cap W^{3}\cap\cdots$ is the clone generated by implication;

\item $U^{k}$ is the dual of $W^{k}$ for $k=2,3,\ldots,\infty$.
\end{itemize}

All other clones can be obtained as intersections of these clones. The
different types of nodes and edges in Figure~\ref{fig post} help the
navigation in the Post lattice as follows:

\begin{itemize}
\item nodes representing clones of idempotent functions have a double outline
(others have a single outline), and a double edge connects a clone
$\mathcal{C}$ to $\mathcal{C}\cap\Omega_{01}$;

\item nodes representing clones of monotone functions are filled (others have
empty interior), and a thick edge connects a clone $\mathcal{C}$ to
$\mathcal{C}\cap M$;

\item nodes representing clones of self-dual functions are squares (others are
circles), and a dashed edge connects a clone $\mathcal{C}$ to $\mathcal{C}\cap
S$;

\item nodes representing clones of essentially at most unary functions are
grey (others are black), and all edges incident with unary clones are grey.
\end{itemize}

\subsection{Relational constraints\label{subsect relational constarints}}

If $P\subseteq A^{m}$ is a relation of arity $m$, and $N\in A^{m\times n}$ is
an $m\times n$ matrix such that each column of $N$ belongs to $P$, then we say
that $N$ is a $P$\emph{-matrix}. Applying an $n$-ary function $f$ to the rows
of $N$, we obtain the column vector $f\left(  N\right)  \in A^{m}$. A
\emph{relational constraint} of arity $m$ is a pair $\left(  P,Q\right)  $,
where $P$ and $Q$ are $m$-ary relations. An $n$-ary function $f$
\emph{satisfies} the constraint $\left(  P,Q\right)  $ if $f\left(  N\right)
\in Q$ for every $P$-matrix $N$ of size $m\times n$. Satisfaction of
relational constraints gives rise to a Galois connection that defines
equational classes of functions.

\begin{theorem}
[\cite{Pip}]\label{thm pippenger}A class of functions on a finite set $A$ is
an equational class iff it can be defined by relational constraints.
\end{theorem}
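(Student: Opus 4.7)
The plan is to prove both directions separately. For the implication ``definable by relational constraints $\Rightarrow$ equational class'' I would directly verify closure under $\preceq$: if $f$ satisfies a constraint $(P,Q)$ and $g(x_{1},\ldots,x_{m})=f(x_{\sigma(1)},\ldots,x_{\sigma(n)})$ is a subfunction of $f$, then for any $P$-matrix $N$ with $m$ columns, reorganising the columns according to $\sigma$ produces another $P$-matrix $N'$ with $g(N)=f(N')\in Q$. Hence the class defined by any collection of constraints is automatically an order ideal in $\preceq$.

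For the converse I would produce, for each $f\notin\mathcal{K}$, a single relational constraint that separates $f$ from $\mathcal{K}$, and then take the collection of these constraints to define $\mathcal{K}$. Assume $f$ has arity $n$, set $m=|A|^{n}$, and enumerate $A^{n}=\{\mathbf{a}^{(1)},\ldots,\mathbf{a}^{(m)}\}$. Let $c_{j}\in A^{m}$ have $i$-th entry equal to the $j$-th coordinate of $\mathbf{a}^{(i)}$, put $P_{f}=\{c_{1},\ldots,c_{n}\}\subseteq A^{m}$, and define
\[
Q_{f}=\bigl\{\bigl(h(\mathbf{a}^{(1)}),\ldots,h(\mathbf{a}^{(m)})\bigr)^{T}:h\in\mathcal{K}\text{ is }n\text{-ary}\bigr\}.
\]
The idea is that $P_{f}$ encodes the ``generic'' $n$-column input as a single $m$-tuple per column, while $Q_{f}$ records exactly the value-vectors that $\mathcal{K}$ can realise on this input through $n$-ary members.

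Two checks then remain. First, every $g\in\mathcal{K}$ of arity $k$ satisfies $(P_{f},Q_{f})$: the columns of any $P_{f}$-matrix $N'$ of width $k$ form a tuple $(c_{\sigma(1)},\ldots,c_{\sigma(k)})$ for some $\sigma\colon\{1,\ldots,k\}\to\{1,\ldots,n\}$, so $g(N')$ equals the value-vector of the $n$-ary subfunction $h(x_{1},\ldots,x_{n})=g(x_{\sigma(1)},\ldots,x_{\sigma(k)})$; since $h\preceq g$ and $\mathcal{K}$ is an order ideal, $h\in\mathcal{K}$ and hence $g(N')\in Q_{f}$. Second, $f$ itself fails $(P_{f},Q_{f})$ on the canonical $P_{f}$-matrix $N_{f}$ whose columns are $c_{1},\ldots,c_{n}$: if $f(N_{f})\in Q_{f}$ were to hold, some $n$-ary $h\in\mathcal{K}$ would agree with $f$ on every point of $A^{n}$, forcing $h=f$ and contradicting $f\notin\mathcal{K}$.

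The main (and essentially only) obstacle is pinning down the correct shape of the separating constraint $(P_{f},Q_{f})$; once it is in hand, both verifications are routine. Finiteness of $A$ is used exactly to guarantee that $m=|A|^{n}$ is finite so that $(P_{f},Q_{f})$ is a genuine finitary relational constraint; the rest is bookkeeping reminiscent of the Pol-Inv Galois connection for clones.
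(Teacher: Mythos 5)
Your proof is correct. The paper itself does not prove Theorem~\ref{thm pippenger} (it is quoted from Pippenger's work), but your separating constraint $(P_{f},Q_{f})$ --- columns of the matrix $O_{n}$ listing all of $A^{n}$ as $P$, and the value-vectors $h(O_{n})$ of $n$-ary members of $\mathcal{K}$ as $Q$ --- is exactly the construction the paper uses in its proof of the analogous Theorem~\ref{thm galois connection}, so your argument matches the paper's approach in all essentials.
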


As an illustration of this theorem let us consider our two examples from
Subsection~\ref{subsect equational classes}: the class of antimonotone
functions can be defined by the constraint $\left(  \leq,\geq\right)  $, and
the class $\Omega_{=}$ can be defined by $\left(  \left\{  \left(  0,1\right)
\right\}  ,\left\{  \left(  0,0\right)  ,\left(  1,1\right)  \right\}
\right)  $.

Iterative algebras can be characterized by relational constraints as well. Let
us note that iterative algebras are usually defined with the help of the five
operations $\zeta,\tau,\Delta,\nabla,\ast$ introduced by Mal'cev \cite{M}, but
using function class composition we can give a very compact definition: a
function class $\mathcal{K}$ is an \emph{iterative algebra} iff $\mathcal{K}%
\circ\left(  \mathcal{K}\cup\left\{  \operatorname{id}\right\}  \right)
\subseteq\mathcal{K}$.

\begin{theorem}
[\cite{H}]A class of functions on a finite set $A$ is an iterative algebra iff
it can be defined by relational constraints $\left(  P,Q\right)  $ with
$Q\subseteq P$.
\end{theorem}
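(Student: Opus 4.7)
My plan is to prove both implications directly; only the ``only if'' direction requires any construction.

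For the ``if'' direction, I would suppose that $\mathcal{K}$ consists of the functions satisfying some family of constraints $(P,Q)$ with $Q\subseteq P$, and verify $\mathcal{K}\circ(\mathcal{K}\cup\{\operatorname{id}\})\subseteq\mathcal{K}$ directly. Given $f\in\mathcal{K}$ of arity $n$ and inner functions $g_{1},\ldots,g_{n}$ of common arity $k$, each in $\mathcal{K}$ or a projection $e_{i}^{(k)}$, fix a defining constraint $(P,Q)$ of arity $m$ and any $P$-matrix $N$ of size $m\times k$. Form $N'$ whose $j$-th column is $g_{j}(N)$: if $g_{j}\in\mathcal{K}$ then $g_{j}(N)\in Q\subseteq P$, and if $g_{j}$ is a projection then $g_{j}(N)$ is simply a column of $N$, hence in $P$. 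Thus $N'$ is a $P$-matrix, so $f(N')\in Q$, which says exactly that $f(g_{1},\ldots,g_{n})$ satisfies $(P,Q)$.

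For the ``only if'' direction, let $\mathcal{K}$ be an iterative algebra. For each $n\geq 1$ I would build a single ``truth-table'' constraint of arity $m=\lvert A\rvert^{n}$. Fixing an enumeration $\mathbf{a}_{1},\ldots,\mathbf{a}_{m}$ of $A^{n}$, define $\operatorname{col}(h)=(h(\mathbf{a}_{1}),\ldots,h(\mathbf{a}_{m}))^{\mathsf T}\in A^{m}$ for each $n$-ary operation $h$, and set
\[
Q_{n}=\{\operatorname{col}(h):h\in\mathcal{K}\text{ is }n\text{-ary}\},\qquad P_{n}=Q_{n}\cup\{\operatorname{col}(e_{i}^{(n)}):1\leq i\leq n\}.
\]
By construction $Q_{n}\subseteq P_{n}$. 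The routine observation is that if $N$ is an $m\times k$ matrix whose $j$-th column is $\operatorname{col}(g_{j})$ for some $g_{j}\in\mathcal{K}\cup\{e_{1}^{(n)},\ldots,e_{n}^{(n)}\}$, then, row by row, $f(N)=\operatorname{col}(f(g_{1},\ldots,g_{k}))$ for any $k$-ary $f$. Hence if $f\in\mathcal{K}$, the iterative algebra property gives $f(g_{1},\ldots,g_{k})\in\mathcal{K}$, so $f(N)\in Q_{n}$; this shows every $f\in\mathcal{K}$ satisfies every $(P_{n},Q_{n})$. Conversely, if a $k$-ary $f$ satisfies $(P_{k},Q_{k})$, apply it to the particular $P_{k}$-matrix $N^{*}$ whose columns are $\operatorname{col}(e_{1}^{(k)}),\ldots,\operatorname{col}(e_{k}^{(k)})$ (equivalently, whose rows enumerate $A^{k}$); one reads off $f(N^{*})=\operatorname{col}(f)$, so $\operatorname{col}(f)\in Q_{k}$ forces $f\in\mathcal{K}$.

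There is no serious obstacle; the content lies entirely in the asymmetric choice of $(P_{n},Q_{n})$, with projections admitted into $P_{n}$ but not placed into $Q_{n}$. This mirrors the asymmetry of the definition $\mathcal{K}\circ(\mathcal{K}\cup\{\operatorname{id}\})\subseteq\mathcal{K}$, in which projections occur on the inner side of the composition without being required to lie in $\mathcal{K}$ itself.
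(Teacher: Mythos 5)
Your proof is correct. Note that the paper itself gives no proof of this statement --- it is quoted from Harnau's work \cite{H} --- so there is nothing to compare against directly; but your argument is exactly the natural adaptation of the paper's own proof of Theorem~\ref{thm galois connection}: there the author takes $P_{n}$ to be the columns of the truth-table matrix $O_{n}$ (i.e.\ the columns $\operatorname{col}(e_{i}^{(n)})$) and $Q_{n}=\{f(O_{n}):f\in\mathcal{K}\}$, and encodes the asymmetry of composition closure via \emph{strong} satisfaction of $(P_{n},Q_{n})$, whereas you encode the asymmetry of $\mathcal{K}\circ(\mathcal{K}\cup\{\operatorname{id}\})\subseteq\mathcal{K}$ by enlarging $P_{n}$ to $Q_{n}\cup\{\operatorname{col}(e_{i}^{(n)})\}$ so that ordinary satisfaction with $Q_{n}\subseteq P_{n}$ suffices. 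Both the verification that such classes are iterative algebras (using $Q\subseteq P$ to feed inner-function outputs back into $P$, and the columns of $N$ themselves for projection inner functions) and the recovery of $f$ from the matrix $N^{*}$ whose rows enumerate $A^{k}$ are sound; the case where all inner functions are projections is what makes the defined class an equational class, and your constraints handle it since the iterative-algebra property covers that case as well.
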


A function $f$ \emph{preserves} the relation $P$ iff $f$ satisfies the
constraint $\left(  P,P\right)  $. This induces the well-known Pol-Inv Galois
connection between clones and relational clones.

\begin{theorem}
[\cite{BKKR,G}]A class of functions on a finite set $A$ is a clone iff it can
be defined by relations.
\end{theorem}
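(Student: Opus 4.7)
I would derive this as a short corollary of the two preceding theorems, exploiting the fact that a clone is exactly an iterative algebra containing the projections.

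For the ``if'' direction, assume $\mathcal{C}$ is the set of functions preserving every relation from some family $\{R_\alpha\}$, i.e., $\mathcal{C}$ is defined by the constraints $(R_\alpha,R_\alpha)$. Every projection preserves every relation, so $\{\operatorname{id}\}\subseteq\mathcal{C}$. To verify closure under composition, let $f\in\mathcal{C}$ be $n$-ary and $g_1,\ldots,g_n\in\mathcal{C}$ be $k$-ary, and let $N$ be an $R_\alpha$-matrix of size $m\times k$. Then each column $g_i(N)$ lies in $R_\alpha$, so the $m\times n$ matrix $N'$ whose columns are $g_1(N),\ldots,g_n(N)$ is itself an $R_\alpha$-matrix; hence $f(g_1,\ldots,g_n)(N)=f(N')\in R_\alpha$, and $f(g_1,\ldots,g_n)\in\mathcal{C}$.

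For the ``only if'' direction, let $\mathcal{C}$ be a clone. Since $\mathcal{C}$ is closed under composition and contains the projections, it is an iterative algebra, so by H's theorem above $\mathcal{C}$ is defined by a family of constraints $\{(P_\alpha,Q_\alpha)\}$ with $Q_\alpha\subseteq P_\alpha$. The plan is to argue also $P_\alpha\subseteq Q_\alpha$, so each constraint collapses to $(R_\alpha,R_\alpha)$ with $R_\alpha=P_\alpha=Q_\alpha$, which is exactly preservation of $R_\alpha$. For this I use that the unary identity $e_1^{(1)}\in\mathcal{C}$ satisfies every constraint in the family: given any $\mathbf{a}\in P_\alpha$, the $m\times 1$ matrix with single column $\mathbf{a}$ is a $P_\alpha$-matrix, and applying $e_1^{(1)}$ returns $\mathbf{a}$, which must therefore lie in $Q_\alpha$.

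I do not foresee any real obstacle beyond what is already encapsulated in the Pippenger and H theorems; the new content is just the one-line observation that the presence of projections in $\mathcal{C}$ forces $P\subseteq Q$ in every satisfied constraint, which, combined with the reverse inclusion coming from the iterative-algebra structure, makes $P_\alpha$ and $Q_\alpha$ coincide.
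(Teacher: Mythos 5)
Your argument is correct. Note first that the paper offers no proof of this statement at all: it is quoted as a classical result of Bodnar\v{c}uk--Kalu\v{z}nin--Kotov--Romov and Geiger, so there is nothing to compare line by line. What you do instead is reduce it to the two neighbouring cited theorems, and both halves of your reduction check out: the ``if'' direction is the same matrix computation that appears in the paper's proof of Theorem~\ref{thm galois connection} (columns $g_1(N),\ldots,g_n(N)$ form an $R_\alpha$-matrix, so $f(g_1,\ldots,g_n)(N)=f(N')\in R_\alpha$), together with the trivial observation that projections preserve everything; and in the ``only if'' direction your key point --- that $e_1^{(1)}\in\mathcal{C}$ applied to a single-column $P_\alpha$-matrix forces $P_\alpha\subseteq Q_\alpha$, which together with Harnau's $Q_\alpha\subseteq P_\alpha$ collapses each constraint to $(R_\alpha,R_\alpha)$ --- is a clean and valid one-line observation. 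The only caveat worth recording is that this is a reduction, not a self-contained proof: all of the substance is deferred to Harnau's theorem, which is the \emph{more} general statement and is itself ordinarily proved by extending Geiger's construction (a Galois-closure argument on relational pairs), so logically you are deriving the older, weaker theorem from the newer, stronger one. That is perfectly legitimate given the theorems as stated in the paper, but it would not serve as an independent proof of the Pol--Inv theorem.
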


Now we present another Galois connection that characterizes composition-closed
equational classes. Let us say that a function $f$ \emph{strongly satisfies}
the relational constraint $\left(  P,Q\right)  $, if $f$ satisfies both
$\left(  P,Q\right)  $ and $\left(  Q,Q\right)  $ (i.e., $f$ satisfies
$\left(  P,Q\right)  $ and preserves $Q$). The function class $\mathcal{K}$ is
\emph{strongly defined} by relational constraints if there exists a set
$\left\{  \left(  P_{i},Q_{i}\right)  :i\in I\right\}  $ of relational
constraints such that a function belongs to $\mathcal{K}$ iff it strongly
satisfies $\left(  P_{i},Q_{i}\right)  $ for all $i\in I$.

\begin{theorem}
\label{thm galois connection}A class of functions on a finite set $A$ is a
composition-closed equational class iff it can be strongly defined by
relational constraints.
\end{theorem}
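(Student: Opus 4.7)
The plan is to prove the two inclusions separately. For the easier implication, suppose $\mathcal{K}$ is strongly defined by a family $\{(P_i,Q_i) : i \in I\}$ of constraints; then $\mathcal{K}$ is an equational class by Theorem~\ref{thm pippenger} applied to the enlarged family $\{(P_i,Q_i),(Q_i,Q_i) : i \in I\}$, since strong satisfaction is exactly ordinary satisfaction of both pairs. For composition-closedness, I would take $f,g_1,\ldots,g_n \in \mathcal{K}$ with the $g_j$ of common arity $k$ and check that $h=f(g_1,\ldots,g_n)$ strongly satisfies each $(P_i,Q_i)$. Given a $P_i$-matrix $N$, each $g_j(N)$ lies in $Q_i$ because $g_j$ satisfies $(P_i,Q_i)$; the columns $g_1(N),\ldots,g_n(N)$ assemble into a $Q_i$-matrix $N'$, and since $f$ preserves $Q_i$ one gets $h(N)=f(N')\in Q_i$. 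Replacing the initial $P_i$-matrix by a $Q_i$-matrix in the same argument shows that $h$ also preserves $Q_i$.

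For the nontrivial direction, let $\mathcal{K}$ be a composition-closed equational class. For each $n\geq 1$ let $N_0 \in A^{m\times n}$, with $m=|A|^n$, be the canonical matrix whose rows enumerate $A^n$ in some fixed order, and define
\[
P_n=\{\text{columns of } N_0\}\subseteq A^m,\qquad Q_n=\{g(N_0):g\in \mathcal{K}^{(n)}\}\subseteq A^m,
\]
where $\mathcal{K}^{(n)}$ denotes the $n$-ary part of $\mathcal{K}$. My claim is that $\mathcal{K}$ is exactly the class of functions strongly satisfying every $(P_n,Q_n)$. That every $h\in\mathcal{K}$ satisfies $(P_n,Q_n)$ is the equational-class content: a $P_n$-matrix $N$ of size $m\times k$ is obtained from $N_0$ by a column-selection map $\sigma\colon\{1,\ldots,k\}\to\{1,\ldots,n\}$, and then $h(N)=g(N_0)$ for the $n$-ary subfunction $g(x_1,\ldots,x_n)=h(x_{\sigma(1)},\ldots,x_{\sigma(k)})$, which lies in $\mathcal{K}^{(n)}$. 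That $h$ preserves $Q_n$ is the composition content: a $Q_n$-matrix $N$ has columns $g_j(N_0)$ with $g_j\in\mathcal{K}^{(n)}$, so $h(N)=h(g_1,\ldots,g_k)(N_0)\in Q_n$ because $h(g_1,\ldots,g_k)\in\mathcal{K}^{(n)}$ by composition-closedness.

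Conversely, an $n$-ary function $f$ that strongly satisfies $(P_n,Q_n)$ can be tested against the $P_n$-matrix $N_0$ itself, forcing $f(N_0)\in Q_n$; since the rows of $N_0$ exhaust $A^n$, $f$ is determined by $f(N_0)$ and therefore agrees with some $g\in\mathcal{K}^{(n)}$, so $f\in\mathcal{K}$. The guiding insight -- rather than a genuine obstacle -- is that on the $P$-side of $(P_n,Q_n)$ one encodes equational-class closure (identification, permutation and addition of dummy variables), while on the $Q$-side one encodes composition closure; the extra $(Q,Q)$-preservation clause built into strong satisfaction is precisely what is needed to control the intermediate $Q_i$-matrix produced when an outer function is fed the outputs of inner functions, which is the step where plain satisfaction of $(P,Q)$ fails and hence the reason a new Galois connection is required in the first place.
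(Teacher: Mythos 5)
Your proof is correct and follows essentially the same route as the paper: the easy direction via the $Q_i$-matrix assembled from the columns $g_j(N)$, and the converse via the canonical matrix $O_n$ enumerating $A^n$, with $P_n$ its columns and $Q_n$ the images $g(O_n)$ for $g\in\mathcal{K}^{(n)}$, so that satisfaction of $(P_n,Q_n)$ encodes closure under subfunctions and preservation of $Q_n$ encodes closure under composition. The only cosmetic difference is that you justify the equational-class part by explicitly invoking Theorem~\ref{thm pippenger} on the doubled family $\{(P_i,Q_i),(Q_i,Q_i)\}$, which the paper leaves implicit.
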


\begin{proof}
First let us suppose that $\mathcal{K}$ is strongly defined by a set $\left\{
\left(  P_{i},Q_{i}\right)  :i\in I\right\}  $ of relational constraints,
where $P_{i}$ and $Q_{i}$ are relations of arity $m_{i}$. By
Theorem~\ref{thm pippenger}, $\mathcal{K}$ is an equational class. To verify
that $\mathcal{K}$ is closed under composition, let us consider arbitrary
functions $f,g_{1},\ldots,g_{n}\in\mathcal{K}$, where the arity of $f$ is $n$,
and the arity of $g_{1},\ldots,g_{n}$ is $k$. Then the composition $h=f\left(
g_{1},\ldots,g_{n}\right)  $ is a $k$-ary function on $A$. Let $i\in I$, and
let $N$ be a $P_{i}$-matrix of size $m_{i}\times k$. Since $g_{1},\ldots
,g_{n}$ satisfy the constraint $\left(  P_{i},Q_{i}\right)  $, the
$m_{i}\times n$ matrix $N^{\prime}$ formed by the column vectors $g_{1}\left(
N\right)  ,\ldots,g_{n}\left(  N\right)  $ is a $Q_{i}$-matrix. Therefore,
$h\left(  N\right)  =f\left(  N^{\prime}\right)  \in Q_{i}$, as $f$ preserves
the relation $Q_{i}$. This shows that the composition $h$ satisfies the
constraint $\left(  P_{i},Q_{i}\right)  $. Noting that $f,g_{1},\ldots,g_{n}$
all preserve the relation $Q_{i}$, we see that $h$ preserves $Q_{i}$ as well,
hence $h$ strongly satisfies $\left(  P_{i},Q_{i}\right)  $. This holds for
all $i\in I$, thus $h\in\mathcal{K}$, as claimed.

For the other implication, let us assume that $\mathcal{K}$ is a
composition-closed equational class. Let us write all elements of $A^{n}$
below each other (as row vectors), and let $O_{n}$ denote the resulting
$\left\vert A\right\vert ^{n}\times n$ matrix. Let $P_{n}$ be the set of
column vectors of $O_{n}$, and let $Q_{n}$ be the set of all column vectors of
the form $f\left(  O_{n}\right)  $, where $f\in\mathcal{K}$ is of arity $n$.
Let $\mathcal{K}^{\prime}$ be the class of functions strongly defined by
$\left\{  \left(  P_{n},Q_{n}\right)  :n\in\mathbb{N}\hspace{0cm}\right\}  $.
We will prove that $\mathcal{K}^{\prime}=\mathcal{K}$.

If $f^{\prime}\in\mathcal{K}^{\prime}$ is a function of arity $n$, then
$f^{\prime}\left(  O_{n}\right)  \in Q_{n}$, hence, according to the
definition of $Q_{n}$, there exists an $n$-ary function $f\in\mathcal{K}$ such
that $f^{\prime}\left(  O_{n}\right)  =f\left(  O_{n}\right)  $. Since the
rows $O_{n}$ contain every element of $A^{n}$, this implies that $f^{\prime
}=f$, thus $f^{\prime}\in\mathcal{K}$, hence we can conclude that
$\mathcal{K}^{\prime}\subseteq\mathcal{K}$.

In order to prove that $\mathcal{K}\subseteq\mathcal{K}^{\prime}$, we need to
verify for an arbitrary $n$-ary function $f\in\mathcal{K}$ that $f$ strongly
satisfies the constraint $\left(  P_{k},Q_{k}\right)  $ for every
$k\in\mathbb{N}\hspace{0cm}$. If $N_{1}$ is a $P_{k}$-matrix of size
$\left\vert A\right\vert ^{k}\times n$, then $f\left(  N_{1}\right)
=f_{1}\left(  O_{k}\right)  $ for a suitable $k$-ary subfunction $f_{1}$ of
$f$. Since $\mathcal{K}$ is an equational class, we have $f_{1}\in\mathcal{K}%
$, hence $f_{1}\left(  O_{k}\right)  \in Q_{k}$. This shows that $f$ satisfies
the constraint $\left(  P_{k},Q_{k}\right)  $. We need to check yet that $f$
preserves $Q_{k}$, i.e., that $f\left(  N_{2}\right)  \in Q_{k}$ for any
$Q_{k}$-matrix $N_{2}$ of size $\left\vert A\right\vert ^{k}\times n$. By the
definition of $Q_{k}$, the list of columns of $N_{2}$ is of the form
$g_{1}\left(  O_{k}\right)  ,\ldots,g_{n}\left(  O_{k}\right)  $ for some
$k$-ary $g_{1},\ldots,g_{n}\in\mathcal{K}$, thus $f\left(  N_{2}\right)
=h\left(  O_{k}\right)  $, where $h=f\left(  g_{1},\ldots,g_{n}\right)  $.
Since $\mathcal{K}$ is closed under composition, we have $h\in\mathcal{K}$,
hence $f\left(  N_{2}\right)  =h\left(  O_{k}\right)  \in Q_{k}$, as claimed.
\end{proof}

Concerning our two examples, let us observe that the class of antimonotone
functions is not closed under composition, but $\Omega_{=}$ is closed. Indeed,
$\Omega_{=}$ is strongly defined by the constraint $\left(  \left\{  \left(
0,1\right)  \right\}  ,\left\{  \left(  0,0\right)  ,\left(  1,1\right)
\right\}  \right)  $, since the relation $\left\{  \left(  0,0\right)
,\left(  1,1\right)  \right\}  $ is just the equality relation, and it is
preserved by every function.

\section{Idempotents vs. clones\label{sect idempotents vs clones}}

From now on we will restrict our attention to Boolean functions. In this
section we make some basic observations about the relationship between a
composition-closed equational class $\mathcal{K}$ and the clone $\mathcal{C}$
generated by $\mathcal{K}$. These observations will make it possible to
construct all such classes for a given clone $\mathcal{C}$.

Let us first briefly discuss the unary case: There are $16$ equational classes
consisting of essentially at most unary functions, and the following $10$ are
closed under composition:%
\[
\emptyset,~\left\{  0\right\}  ,~\left\{  1\right\}  ,~\left\{  0,1\right\}
,~\left\{  \operatorname{id}\right\}  ,~\left\{  0,\operatorname{id}\right\}
,~\left\{  1,\operatorname{id}\right\}  ,~\left\{  0,1,\operatorname{id}%
\right\}  ,~\left\{  \operatorname{id},\lnot\right\}  ,~\left\{
\operatorname{id},\lnot,0,1\right\}  .
\]
As we shall see in the following lemma, disregarding these 10 trivial cases,
we can always assume that $+,\lnot+,\rightarrow$ or $\lnot\rightarrow$ is
present in our composition-closed equational class.

\begin{lemma}
\label{lemma +imp}If $\mathcal{K}$ is a composition-closed equational class
that is not a clone, then

\begin{enumerate}
\item $\mathcal{K}\subseteq\Omega_{=}$;

\item if $\mathcal{K}\nsubseteq\Omega^{\left(  1\right)  }$, then
$\mathcal{K}\cap\left\{  +,\lnot+,\rightarrow,\lnot\rightarrow\right\}
\neq\emptyset$.
\end{enumerate}
\end{lemma}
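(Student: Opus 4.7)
My approach rests on the observation that for every $f\in\mathcal{K}$ the unary diagonal $g(x)=f(x,\dots,x)$ is a subfunction of $f$, hence belongs to $\mathcal{K}$, and that the only unary Boolean functions (up to equivalence) are $0,1,\operatorname{id}$, and $\lnot$.

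For part (1), I will first argue that $\operatorname{id}\in\mathcal{K}$ already forces $\mathcal{K}$ to be a clone: since $\mathcal{K}$ is equational it is closed under introduction of dummy variables, so every projection $e_{i}^{(n)}$ (being equivalent to $\operatorname{id}$) lies in $\mathcal{K}$, and composition-closure then makes $\mathcal{K}$ a clone. Similarly $\lnot\in\mathcal{K}$ forces $\mathcal{K}$ to be a clone because $\lnot(\lnot)$ is a legal composition of two unary functions (matching arities in the sense of (\ref{eq comp})) which equals $\operatorname{id}$, reducing to the previous case. Consequently, if $\mathcal{K}$ is not a clone then neither $\operatorname{id}$ nor $\lnot$ lies in $\mathcal{K}$, so the unary $g(x)=f(x,\dots,x)\in\mathcal{K}$ must be one of the constants $0,1$, giving $f(\mathbf{0})=f(\mathbf{1})$, i.e., $f\in\Omega_{=}$.

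For part (2), I pick $f\in\mathcal{K}$ with at least two essential variables, WLOG of arity $n\geq 2$; then $f$ is non-constant and by part~(1) satisfies $f(\mathbf{0})=f(\mathbf{1})=c$ for some $c\in\{0,1\}$. Non-constancy forces the existence of a tuple $\mathbf{a}\in\{0,1\}^{n}\setminus\{\mathbf{0},\mathbf{1}\}$ with $f(\mathbf{a})\neq c$. Setting $S=\{i:a_{i}=1\}$, a proper non-empty subset of $\{1,\dots,n\}$, I form the binary subfunction $g(x_{1},x_{2})$ obtained from $f$ by substituting $x_{2}$ for the variables indexed by $S$ and $x_{1}$ for the rest; this $g$ belongs to $\mathcal{K}$. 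A direct computation gives $g(0,0)=g(1,1)=c$ and $g(0,1)=f(\mathbf{a})\neq c$, while $g(1,0)=f(\mathbf{1}_{S^{c}})\in\{c,\lnot c\}$; in particular $g$ essentially depends on both variables (comparing $(0,0)$ with $(0,1)$, and $(0,1)$ with $(1,1)$), so $g$ is essentially binary. A two-value case analysis on $g(1,0)$ then identifies $g$, up to a permutation of its variables, with one of $+,\lnot+,\rightarrow,\lnot\rightarrow$, these being exactly the equivalence classes of essentially binary members of $\Omega_{=}$.

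I do not foresee serious obstacles: the two key observations (that $\operatorname{id}$ or $\lnot$ in $\mathcal{K}$ forces a clone, and that a non-constant $f\in\Omega_{=}$ must take its second value on some non-constant input) do all the work; the final classification of essentially binary elements of $\Omega_{=}$ into the four equivalence classes $+,\lnot+,\rightarrow,\lnot\rightarrow$ is a routine enumeration of $2\times 2$ truth tables.
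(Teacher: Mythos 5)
Your proof is correct. Part (1) is essentially identical to the paper's argument: both reduce to the observation that the unary diagonal $\Delta_{f}(x)=f(x,\ldots,x)$ lies in $\mathcal{K}$, and that $\Delta_{f}\in\{\operatorname{id},\lnot\}$ would force $\operatorname{id}\in\mathcal{K}$ (via $\lnot\lnot=\operatorname{id}$ in the second case), so $\Delta_{f}$ must be constant. For part (2) you take a genuinely more self-contained route. The paper simply invokes the structure of the subfunction quasiorder (its Figure~\ref{fig subf}): any equational class not contained in $\Omega^{\left(1\right)}$ contains one of the $14$ ``second-level'' functions, and of these only $+,\lnot+,\rightarrow,\lnot\rightarrow$ lie in $\Omega_{=}$. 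You instead prove the relevant special case directly: given a non-constant $f\in\Omega_{=}$ and a witness $\mathbf{a}\notin\{\mathbf{0},\mathbf{1}\}$ with $f(\mathbf{a})\neq f(\mathbf{0})$, the identification minor along the partition $\{S,S^{c}\}$ with $S=\{i:a_{i}=1\}$ is automatically essentially binary because $g(0,0)=g(1,1)\neq g(0,1)$. This neatly sidesteps the one point where a naive version of your argument could fail: in general a function with two essential variables need not have an essentially binary minor (the majority function is the standard counterexample, which is exactly why the paper's second level contains ternary functions $M,m,\frac{2}{3}m$); your construction shows this obstruction cannot occur inside $\Omega_{=}$. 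The paper's version buys brevity by citing the quasiorder picture; yours buys independence from that classification at the cost of a short truth-table case analysis, which you carry out correctly.
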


\begin{proof}
If $\mathcal{K}$ is closed under composition, but $\mathcal{K}$ is not a
clone, then $\operatorname{id}\notin\mathcal{K}$. Since $\mathcal{K}$ is an
equational class, for each $f\in\mathcal{K}$ the unary subfunction $\Delta
_{f}\left(  x\right)  =f\left(  x,\ldots,x\right)  $ of $f$ belongs to
$\mathcal{K}$. If there is a function $f\in\mathcal{K}\cap\Omega_{01}$, then
we can conclude that $\operatorname{id}=\Delta_{f}\in\mathcal{K}$, a
contradiction. If there is a function $f\in\mathcal{K}\cap\Omega_{10}$, then
$\lnot=\Delta_{f}\in\mathcal{K}$, and since $\mathcal{K}$ is closed under
composition, we have $\operatorname{id}=\lnot\lnot\in\mathcal{K}$, which is a
contradiction again. Thus $\mathcal{K}\cap\Omega_{01}=\emptyset$ and
$\mathcal{K}\cap\Omega_{10}=\emptyset$, therefore $\mathcal{K}\subseteq
\Omega_{=}$.

To prove the second statement of the lemma, let us observe that any equational
class $\mathcal{K}\nsubseteq\Omega^{\left(  1\right)  }$ must contain at least
one of the 14 functions shown on the \textquotedblleft second
level\textquotedblright\ of Figure~\ref{fig subf}. Since we have
$\mathcal{K}\subseteq\Omega_{=}$, we see that at least one of the functions
$+,\lnot+,\rightarrow,\lnot\rightarrow$ belongs to $\mathcal{K}$.
\end{proof}

Note that the first statement of the lemma shows that the non-clone
composition-closed equational classes form a principal filter in $\mathbf{I}$
(see Figure~\ref{fig biglattice}). Hence the lattice $\mathbf{I}$ has six
coatoms: the five maximal clones ($\Omega_{0\ast},\Omega_{\ast1},M,S,L$) and
$\Omega_{=}$. Next we prove that every composition-closed equational class is
idempotent, as mentioned in Subsection~\ref{subsect equational classes}.

\begin{lemma}
\label{lemma elso}Let $\mathcal{K}_{1},\mathcal{K}_{2}$ be composition-closed
equational classes such that $\mathcal{K}_{1}\subseteq\mathcal{K}_{2}$ and
$\mathcal{K}_{1}\nsubseteq\Omega^{\left(  1\right)  }$. Then we have
$\mathcal{K}_{1}\circ\mathcal{K}_{2}=\mathcal{K}_{2}$.
\end{lemma}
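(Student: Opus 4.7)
The plan is to establish the two inclusions separately. The forward inclusion $\mathcal{K}_{1}\circ\mathcal{K}_{2}\subseteq\mathcal{K}_{2}$ is immediate: since $\mathcal{K}_{1}\subseteq\mathcal{K}_{2}$ and $\mathcal{K}_{2}$ is closed under composition, one has $\mathcal{K}_{1}\circ\mathcal{K}_{2}\subseteq\mathcal{K}_{2}\circ\mathcal{K}_{2}\subseteq\mathcal{K}_{2}$.

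For the reverse inclusion $\mathcal{K}_{2}\subseteq\mathcal{K}_{1}\circ\mathcal{K}_{2}$, I fix an arbitrary $g\in\mathcal{K}_{2}$ of arity $k$ and aim to exhibit it as a composition $h(g_{1},\ldots,g_{n})$ whose outer function $h$ lies in $\mathcal{K}_{1}$ and whose inner functions $g_{i}$ are $k$-ary members of $\mathcal{K}_{2}$. The natural case split is on whether $\mathcal{K}_{1}$ is a clone. If it is, then $\operatorname{id}\in\mathcal{K}_{1}$ and the one-place composition $g=\operatorname{id}(g)$ already witnesses $g\in\mathcal{K}_{1}\circ\mathcal{K}_{2}$.

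If $\mathcal{K}_{1}$ is not a clone, then Lemma~\ref{lemma +imp}(ii), applicable because $\mathcal{K}_{1}\not\subseteq\Omega^{(1)}$, produces some $h\in\mathcal{K}_{1}\cap\{+,\lnot+,\rightarrow,\lnot\rightarrow\}$. In each of these four cases the diagonal $\Delta_{h}(x)=h(x,x)$ is a constant: it equals $0$ for $+$ and $\lnot\rightarrow$, and $1$ for $\lnot+$ and $\rightarrow$. Hence that constant lies in $\mathcal{K}_{1}\subseteq\mathcal{K}_{2}$, and the equational-class property makes it available in $\mathcal{K}_{2}$ in every arity, in particular as a $k$-ary function with $k$ dummy variables. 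I can then read off the required binary composition from one of the identities
\[
g = g + 0, \qquad g = \lnot(g + 1), \qquad g = 1 \rightarrow g, \qquad g = g \wedge \lnot 0,
\]
using $g$ as one inner argument and the relevant constant as the other; both inner functions lie in $\mathcal{K}_{2}$, so indeed $g\in\mathcal{K}_{1}\circ\mathcal{K}_{2}$.

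I do not anticipate any substantive obstacle: the statement is essentially a bookkeeping consequence of Lemma~\ref{lemma +imp}, and the only point requiring any care is checking the four explicit identities displayed above, which cover exactly the four alternatives delivered by that lemma.
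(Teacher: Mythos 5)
Your proof is correct and follows essentially the same route as the paper: the forward inclusion via $\mathcal{K}_2\circ\mathcal{K}_2\subseteq\mathcal{K}_2$, and the reverse inclusion by splitting on whether $\operatorname{id}\in\mathcal{K}_1$ and otherwise invoking Lemma~\ref{lemma +imp} to obtain one of $+,\lnot+,\rightarrow,\lnot\rightarrow$ and composing $g$ with the constant $\Delta_h(x)=h(x,x)$. The paper writes these witnesses as $f(f(x,x),g)$ (resp. $f(g,f(x,x))$ for $\lnot\rightarrow$), which is exactly your set of four identities.
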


\begin{proof}
One direction is obvious: $\mathcal{K}_{1}\circ\mathcal{K}_{2}\subseteq
\mathcal{K}_{2}\circ\mathcal{K}_{2}\subseteq\mathcal{K}_{2}$. The containment
$\mathcal{K}_{2}\subseteq\mathcal{K}_{1}\circ\mathcal{K}_{2}$ is also clear if
$\operatorname{id}\in\mathcal{K}_{1}$. If this is not the case, then one of
the functions $+,\lnot+,\rightarrow,\lnot\rightarrow$ belongs to
$\mathcal{K}_{1}$ by Lemma~\ref{lemma +imp}. If $f\left(  x,y\right)
=x+y\in\mathcal{K}_{1}$, then for any $g\in\mathcal{K}_{2}$ we have
$g=f\left(  f\left(  x,x\right)  ,g\right)  \in\mathcal{K}_{1}\circ
\mathcal{K}_{2}$, thus $\mathcal{K}_{2}\subseteq\mathcal{K}_{1}\circ
\mathcal{K}_{2}$. The same is true for $\lnot+$ and $\rightarrow$, while for
$f\left(  x,y\right)  =\lnot\left(  x\rightarrow y\right)  $ we can use the
composition $f\left(  g,f\left(  x,x\right)  \right)  $.
\end{proof}

\begin{proposition}
\label{prop (sub)idempotents}An equational class $\mathcal{K}$ is closed under
composition iff it is idempotent.
\end{proposition}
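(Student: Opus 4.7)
The plan is to establish only the forward implication $\mathcal{K}\subseteq\mathcal{K}\circ\mathcal{K}$, since the reverse inclusion is precisely the hypothesis of being closed under composition. I would split the argument on whether $\mathcal{K}\subseteq\Omega^{\left(1\right)}$, so that the essentially non-unary bulk can be dispatched by an already proved lemma.

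If $\mathcal{K}\nsubseteq\Omega^{\left(1\right)}$, I would apply Lemma~\ref{lemma elso} with $\mathcal{K}_{1}=\mathcal{K}_{2}=\mathcal{K}$: its hypotheses $\mathcal{K}\subseteq\mathcal{K}$ and $\mathcal{K}\nsubseteq\Omega^{\left(1\right)}$ are immediate, and the conclusion $\mathcal{K}\circ\mathcal{K}=\mathcal{K}$ is exactly what is required. This single invocation reduces the entire proposition to handling the essentially at most unary case.

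In the remaining case $\mathcal{K}\subseteq\Omega^{\left(1\right)}$, the class $\mathcal{K}$ must be one of the ten composition-closed classes listed at the opening of this section. For the six entries containing $\operatorname{id}$, any $f\in\mathcal{K}$ equals $f(\operatorname{id},\ldots,\operatorname{id})$ (using the convention that $\operatorname{id}$ stands for every projection), so $f\in\mathcal{K}\circ\mathcal{K}$. For the four constant-only entries $\emptyset,\{0\},\{1\},\{0,1\}$ the statement is trivial: $\emptyset\circ\emptyset=\emptyset$, while any constant $c$ of arity $n$ in a nonempty such class can be realised as the composition of the unary copy of $c$ with its $n$-ary copy, both lying in $\mathcal{K}$. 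Thus no real obstacle remains once Lemma~\ref{lemma elso} is invoked; the only point requiring care is the completeness of the enumeration of the ten unary composition-closed equational classes, which the paper has already supplied.
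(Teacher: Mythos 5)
Your proposal is correct and follows essentially the same route as the paper: the non-unary case is dispatched by Lemma~\ref{lemma elso} with $\mathcal{K}_{1}=\mathcal{K}_{2}=\mathcal{K}$, and the case $\mathcal{K}\subseteq\Omega^{\left(1\right)}$ is handled separately (the paper dismisses it as obvious, while you spell out the check over the ten unary composition-closed classes). No gaps.
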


\begin{proof}
The \textquotedblleft if\textquotedblright\ part is obvious; the
\textquotedblleft only if\textquotedblright\ part is also obvious if
$\mathcal{K}\subseteq\Omega^{\left(  1\right)  }$, otherwise it follows from
the above lemma with $\mathcal{K}_{1}=\mathcal{K}_{2}=\mathcal{K}$.
\end{proof}

In light of the proposition above, in the following we will refer to a
composition-closed equational class simply as an idempotent. In the next
proposition we explore some relationships between an idempotent $\mathcal{K}$
and the clone $\left[  \mathcal{K}\right]  $ generated by $\mathcal{K}$ that
will play a crucial role in the sequel.

\begin{proposition}
\label{prop CK}Let $\mathcal{K}\nsubseteq\Omega^{\left(  1\right)  }$ be an
idempotent, and let $\mathcal{C}=\left[  \mathcal{K}\right]  $. Then we have
$\mathcal{C}\circ\mathcal{K}=\mathcal{K}$ and $\mathcal{K}\circ\mathcal{C}%
=\mathcal{C}$.
\end{proposition}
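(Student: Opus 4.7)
The plan is to handle the two equalities independently, with $\mathcal{K} \circ \mathcal{C} = \mathcal{C}$ being the easier one. Since $\mathcal{C}$ is a clone, it is automatically a composition-closed equational class, and by construction $\mathcal{K} \subseteq \mathcal{C}$ with $\mathcal{K} \nsubseteq \Omega^{(1)}$. Applying Lemma~\ref{lemma elso} with $\mathcal{K}_1 = \mathcal{K}$ and $\mathcal{K}_2 = \mathcal{C}$ then yields $\mathcal{K} \circ \mathcal{C} = \mathcal{C}$ at once.

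For $\mathcal{C} \circ \mathcal{K} = \mathcal{K}$, the inclusion $\mathcal{K} \subseteq \mathcal{C} \circ \mathcal{K}$ is immediate from $\operatorname{id} \in \mathcal{C}$, so the substantive direction is $\mathcal{C} \circ \mathcal{K} \subseteq \mathcal{K}$. I would introduce the auxiliary class
\[
\mathcal{D} = \bigl\{ f \in \Omega : f(g_1, \ldots, g_n) \in \mathcal{K} \text{ whenever } g_1, \ldots, g_n \in \mathcal{K} \text{ share a common arity} \bigr\},
\]
where $n$ denotes the arity of $f$. The strategy is to show that $\mathcal{D}$ is a clone containing $\mathcal{K}$; since $\mathcal{C} = [\mathcal{K}]$ is by definition the smallest such clone, this forces $\mathcal{C} \subseteq \mathcal{D}$, which is exactly the desired inclusion.

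The three checks needed for $\mathcal{D}$ are straightforward. First, $\mathcal{K} \subseteq \mathcal{D}$ is nothing but the idempotence of $\mathcal{K}$. Second, every projection lies in $\mathcal{D}$ since $e_i^{(n)}(g_1, \ldots, g_n) = g_i \in \mathcal{K}$. Third, $\mathcal{D}$ is closed under clone composition: for $f \in \mathcal{D}$ of arity $n$, functions $f_1, \ldots, f_n \in \mathcal{D}$ of common arity $m$, and any $g_1, \ldots, g_m \in \mathcal{K}$ of common arity, the identity
\[
f(f_1, \ldots, f_n)(g_1, \ldots, g_m) = f\bigl(f_1(g_1, \ldots, g_m), \ldots, f_n(g_1, \ldots, g_m)\bigr)
\]
allows us to apply $f_j \in \mathcal{D}$ at each inner position (placing $f_j(g_1, \ldots, g_m)$ into $\mathcal{K}$) and then $f \in \mathcal{D}$ at the outer step to land in $\mathcal{K}$. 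The only mildly delicate point is arity bookkeeping --- in particular, using that $\mathcal{K}$, being an equational class, is closed under introducing dummy variables, so inner tuples can always be lifted to a common arity when needed --- but this is routine.
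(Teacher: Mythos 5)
Your proof is correct and follows essentially the same route as the paper: the second equality is obtained exactly as in the paper from Lemma~\ref{lemma elso}, and the first by tracing how $\mathcal{C}$ is generated from $\mathcal{K}$ and the projections under composition. The only difference is presentational --- the paper runs an explicit induction on the composition tree of $f\in\mathcal{C}$, whereas you package that same induction as the statement that your auxiliary class $\mathcal{D}$ is a clone containing $\mathcal{K}$; both arguments hinge on the identity $f(f_1,\ldots,f_n)(g_1,\ldots,g_m)=f\bigl(f_1(g_1,\ldots,g_m),\ldots,f_n(g_1,\ldots,g_m)\bigr)$ together with the idempotence of $\mathcal{K}$.
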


\begin{proof}
The second equality follows from Lemma \ref{lemma elso}. The containment
$\mathcal{C}\circ\mathcal{K}\supseteq\mathcal{K}$ is clear, since
$\operatorname{id}\in\mathcal{C}$, so it remains to prove that $\mathcal{C}%
\circ\mathcal{K}\subseteq\mathcal{K}$. Every element of $\mathcal{C}%
\circ\mathcal{K}$ is of the form $h=f\left(  g_{1},\ldots,g_{n}\right)  $,
where $f\in\mathcal{C}$ is $n$-ary, and $g_{1},\ldots,g_{n}\in\mathcal{K}$ are
$k$-ary functions. Since $f\in\mathcal{C}$, it is a composition of elements of
$\mathcal{K}$ and projections. We will prove $h\in\mathcal{K}$ by induction on
the size (number of nodes) of the tree describing this composition (see
Section~\ref{sect intro}).

If the tree has only one node, then $f\in\mathcal{K}$, and then $h\in
\mathcal{K}$ follows since $\mathcal{K}$ is closed under composition. If the
tree has at least two nodes, then $f$ is of the form $f=u\left(  v_{1}%
,\ldots,v_{r}\right)  $ with appropriate functions $u\in\mathcal{K}$ and
$v_{1},\ldots,v_{r}\in\mathcal{C}$, where the composition tree of each $v_{i}$
is smaller than the tree corresponding to $f$. Now we can write $h$ as%
\[
h=\left(  u\left(  v_{1},\ldots,v_{r}\right)  \right)  \left(  g_{1}%
,\ldots,g_{n}\right)  =u\left(  v_{1}\left(  g_{1},\ldots,g_{n}\right)
,\ldots,v_{r}\left(  g_{1},\ldots,g_{n}\right)  \right)  .
\]
By the induction hypothesis, every one of the functions $v_{i}\left(
g_{1},\ldots,g_{n}\right)  $ belongs to $\mathcal{K}$, therefore
$h\in\mathcal{K}\circ\mathcal{K}=\mathcal{K}$.
\end{proof}

For any equational class $\mathcal{K}$ let us write $\left\lfloor
\mathcal{K}\right\rfloor $ for the smallest idempotent containing
$\mathcal{K}$, and recall that $\left[  \mathcal{K}\right]  $ denotes the
smallest clone containing $\mathcal{K}$. The next proposition shows a
relationship between these two closure operators.

\begin{proposition}
\label{prop gen vs lgen}For any equational class $\mathcal{K}$ we have
$\left[  \mathcal{K}\right]  =\left\lfloor \mathcal{K}\cup\left\{
\operatorname{id}\right\}  \right\rfloor $ and $\left\lfloor \mathcal{K}%
\right\rfloor =\left[  \mathcal{K}\right]  \circ\mathcal{K}.$
\end{proposition}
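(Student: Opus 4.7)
For the first equality $[\mathcal{K}]=\lfloor\mathcal{K}\cup\{\mathrm{id}\}\rfloor$, I would make two trivial observations: every clone is in particular a composition-closed equational class, so $[\mathcal{K}]$ is an idempotent containing $\mathcal{K}\cup\{\mathrm{id}\}$; and conversely, any idempotent containing $\mathrm{id}$ automatically contains all projections (which are subfunctions of $\mathrm{id}$) and is therefore a clone, so $\lfloor\mathcal{K}\cup\{\mathrm{id}\}\rfloor$ is a clone containing $\mathcal{K}$. The two closure operators then coincide by minimality.

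For the second equality $\lfloor\mathcal{K}\rfloor=[\mathcal{K}]\circ\mathcal{K}$, the inclusion $[\mathcal{K}]\circ\mathcal{K}\subseteq\lfloor\mathcal{K}\rfloor$ is obtained by applying Proposition \ref{prop CK} to the idempotent $\lfloor\mathcal{K}\rfloor$ itself. Since $\mathcal{K}\subseteq\lfloor\mathcal{K}\rfloor\subseteq[\mathcal{K}]$ (the right-hand inclusion holding because $[\mathcal{K}]$ is itself an idempotent), we have $[\lfloor\mathcal{K}\rfloor]=[\mathcal{K}]$, and the proposition yields $\lfloor\mathcal{K}\rfloor=[\mathcal{K}]\circ\lfloor\mathcal{K}\rfloor\supseteq[\mathcal{K}]\circ\mathcal{K}$.

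For the reverse inclusion $\lfloor\mathcal{K}\rfloor\subseteq[\mathcal{K}]\circ\mathcal{K}$, my plan is to show that $[\mathcal{K}]\circ\mathcal{K}$ is itself an idempotent containing $\mathcal{K}$, after which minimality delivers the inclusion. Containment of $\mathcal{K}$ is immediate from $\mathrm{id}\in[\mathcal{K}]$. The class $[\mathcal{K}]\circ\mathcal{K}$ is an equational class because permuting, identifying, or introducing dummy variables in $f(g_1,\ldots,g_n)$ passes into the inner functions, which remain in the equational class $\mathcal{K}$. The key technical step is closure under composition: given $h_0,h_1,\ldots,h_n\in[\mathcal{K}]\circ\mathcal{K}$ with $h_i=f_i(g_{i,1},\ldots,g_{i,n_i})$ where $f_i\in[\mathcal{K}]$ and $g_{i,j}\in\mathcal{K}$, I would expand the composite $h_0(h_1,\ldots,h_n)$ and use the clone structure of $[\mathcal{K}]$ (free composition with projections, permutation of arguments, introduction of dummy variables) to package the outer $f_0$ together with each $g_{0,k}(f_1,\ldots,f_n)$ into a single function $F\in[\mathcal{K}]$ applied to the concatenated list of all inner functions $g_{i,j}\in\mathcal{K}$.

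The main obstacle I anticipate is the hypothesis $\mathcal{K}\nsubseteq\Omega^{(1)}$ in Proposition \ref{prop CK}: if $\mathcal{K}\subseteq\Omega^{(1)}$, then $\lfloor\mathcal{K}\rfloor\subseteq\Omega^{(1)}$ as well, and that proposition cannot be invoked to obtain the first direction. I would handle this either by imitating the tree-induction proof of Proposition \ref{prop CK} directly, which does not actually require non-unariness (induct on the size of the composition tree of $f\in[\mathcal{K}]$ in $f(g_1,\ldots,g_n)$, absorb any projection occurring at an internal node to shrink the tree, reduce to the case where the root is some $u\in\mathcal{K}$ with strictly smaller subtrees $v_i\in[\mathcal{K}]$, apply the induction hypothesis to $v_i(g_1,\ldots,g_n)\in\lfloor\mathcal{K}\rfloor$, and conclude by closure of $\lfloor\mathcal{K}\rfloor$ under composition), or by simply verifying the identity against the ten idempotents of essentially unary functions enumerated at the start of Section \ref{sect idempotents vs clones}.
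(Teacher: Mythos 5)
Your proof is correct and follows essentially the same route as the paper: both inclusions are obtained exactly as you describe, by showing that $\left[\mathcal{K}\right]\circ\mathcal{K}$ is an idempotent equational class containing $\mathcal{K}$ and by applying Proposition~\ref{prop CK} to $\left\lfloor\mathcal{K}\right\rfloor$ (using $\left[\left\lfloor\mathcal{K}\right\rfloor\right]=\left[\mathcal{K}\right]$). The only cosmetic differences are that the paper derives closure under composition from the cited associativity of $\circ$ on equational classes via $\left(\mathcal{C}\circ\mathcal{K}\right)\circ\left(\mathcal{C}\circ\mathcal{K}\right)\subseteq\mathcal{C}\circ\mathcal{C}\circ\mathcal{C}\circ\mathcal{K}=\mathcal{C}\circ\mathcal{K}$ instead of your direct expansion (which amounts to verifying the needed instance of associativity by hand), and that it dismisses the essentially unary case as trivial where you treat it explicitly.
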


\begin{proof}
The first equality expresses the obvious fact that the clone generated by
$\mathcal{K}$ consists of those functions that can be obtained from elements
of $\mathcal{K}$ and from projections by means of composition. Introducing the
notation $\mathcal{C}=\left[  \mathcal{K}\right]  $, the second equality takes
the form $\left\lfloor \mathcal{K}\right\rfloor =\mathcal{C}\circ\mathcal{K}$.
The right hand side contains $\mathcal{K}$, since $\operatorname{id}%
\in\mathcal{C}$, and it is closed under composition (hence idempotent):%
\[
\left(  \mathcal{C}\circ\mathcal{K}\right)  \circ\left(  \mathcal{C}%
\circ\mathcal{K}\right)  \subseteq\mathcal{C}\circ\mathcal{C}\circ
\mathcal{C}\circ\mathcal{K}=\mathcal{C}\circ\mathcal{K}.
\]

To prove that $\mathcal{C}\circ\mathcal{K}$ is indeed the smallest idempotent
containing $\mathcal{K}$, we have to show that $\mathcal{C}\circ
\mathcal{K}\subseteq\left\lfloor \mathcal{K}\right\rfloor $. The case
$\mathcal{K}\subseteq\Omega^{\left(  1\right)  }$ is trivial, otherwise we can
apply Proposition~\ref{prop CK} to the idempotent $\left\lfloor \mathcal{K}%
\right\rfloor $, and we get%
\[
\mathcal{C}\circ\left\lfloor \mathcal{K}\right\rfloor =\left[  \left\lfloor
\mathcal{K}\right\rfloor \right]  \circ\left\lfloor \mathcal{K}\right\rfloor
=\left\lfloor \mathcal{K}\right\rfloor .
\]
Therefore, $\mathcal{C}\circ\mathcal{K}\subseteq\mathcal{C}\circ\left\lfloor
\mathcal{K}\right\rfloor =\left\lfloor \mathcal{K}\right\rfloor $, and this
proves the proposition.
\end{proof}

\begin{example}
Let us give some simple examples of idempotents together with relational
constraints strongly defining them.

\begin{itemize}
\item $\Omega_{=}=\left\{  f\in\Omega:f\left(  \mathbf{0}\right)  =f\left(
\mathbf{1}\right)  \right\}  $

strongly defined by $\left(  \left\{  \left(  0,1\right)  \right\}  ,\left\{
\left(  0,0\right)  ,\left(  1,1\right)  \right\}  \right)  $

\item $\Omega_{00}=\left\{  f\in\Omega:f\left(  \mathbf{0}\right)  =f\left(
\mathbf{1}\right)  =0\right\}  $

strongly defined by $\left(  \left\{  \left(  0,1\right)  \right\}  ,\left\{
\left(  0,0\right)  \right\}  \right)  $

\item $\Omega_{11}=\left\{  f\in\Omega:f\left(  \mathbf{0}\right)  =f\left(
\mathbf{1}\right)  =1\right\}  $

strongly defined by $\left(  \left\{  \left(  0,1\right)  \right\}  ,\left\{
\left(  1,1\right)  \right\}  \right)  $

\item $\mathcal{R}=\left\{  f\in\Omega:f\left(  \lnot x_{1},\ldots,\lnot
x_{n}\right)  =f\left(  x_{1},\ldots,x_{n}\right)  \right\}  $

strongly defined by $\left(  \left\{  \left(  0,1\right)  ,\left(  1,0\right)
\right\}  ,\left\{  \left(  0,0\right)  ,\left(  1,1\right)  \right\}
\right)  $
\end{itemize}

\noindent Members of the class $\mathcal{R}$ are called \emph{reflexive
functions}.
\end{example}

\begin{example}
\label{ex lgen+}One can determine $\left\lfloor +\right\rfloor $ using our
observations about the composition-closed equational class generated by the
addition operation of a field (see Section~\ref{sect intro}). Alternatively,
we can use Proposition~\ref{prop gen vs lgen}: the elements of $\left\lfloor
+\right\rfloor $ are exactly the functions of the form%
\[
f\left(  x_{i_{1}}+x_{j_{1}},\ldots,x_{i_{n}}+x_{j_{n}}\right)  ,
\]
where $f\in\left[  +\right]  =L_{0\ast}$ and $x_{i_{k}},x_{j_{k}}$ are
arbitrary (not necessarily distinct) variables. Such a function is a sum of an
even number of variables, hence
\[
\left\lfloor +\right\rfloor =\left\{  x_{1}+\cdots+x_{n}:n\text{ is
even}\right\}  =L_{00}.
\]
Similarly, we have
\[
\left\lfloor \lnot+\right\rfloor =\left\{  x_{1}+\cdots+x_{n}+1:n\text{ is
even}\right\}  =L_{11}.
\]

\end{example}

\begin{example}
Proposition \ref{prop gen vs lgen} gives the following general form for the
elements of $\left\lfloor \rightarrow\right\rfloor $:%
\[
f\left(  x_{i_{1}}\rightarrow x_{j_{1}},\ldots,x_{i_{n}}\rightarrow x_{j_{n}%
}\right)  ,
\]
where $f\in\left[  \rightarrow\right]  =W^{\infty}$, and $x_{i_{k}},x_{j_{k}}$
are arbitrary (not necessarily distinct) variables. Here, the lack of
associativity makes it harder to decide whether a given function is of this
form. For example, the function $f\left(  x,y,z\right)  =x\rightarrow\left(
y\rightarrow z\right)  $ seems not to have the required form, but we may
rewrite it as $\left(  x\rightarrow y\right)  \rightarrow\left(  x\rightarrow
z\right)  $, therefore $f\in\left\lfloor \rightarrow\right\rfloor $. On the
other hand, $g\left(  x,y,z\right)  =\left(  x\rightarrow y\right)
\rightarrow z$ does not belong to $\left\lfloor \rightarrow\right\rfloor $. To
verify this, let us observe that $\rightarrow\in\Omega_{11}$, hence
$\left\lfloor \rightarrow\right\rfloor \subseteq\Omega_{11}$. However,
$g\notin\Omega_{11}$ as $g\left(  0,0,0\right)  =0$, therefore $g\notin
\left\lfloor \rightarrow\right\rfloor $. In
Proposition~\ref{prop bottom W_infty} we will describe $\left\lfloor
\rightarrow\right\rfloor $ by relational constraints; this description makes
it easy to decide membership for $\left\lfloor \rightarrow\right\rfloor $.
\end{example}

Proposition \ref{prop gen vs lgen} suggests the following strategy for finding
all idempotents: Let us fix a clone $\mathcal{C}$, and form all possible
compositions $\mathcal{C}\circ\mathcal{K}$, where $\mathcal{K}$ is an
equational class, such that $\left[  \mathcal{K}\right]  =\mathcal{C}${}. This
way we obtain all those idempotents that generate the clone $\mathcal{C}$. Let
us denote the set of these idempotents by $\mathbf{I}\left(  \mathcal{C}%
\right)  $:%
\[
\mathbf{I}\left(  \mathcal{C}\right)  =\left\{  \mathcal{K}\in\mathbf{I}%
:\left[  \mathcal{K}\right]  =\mathcal{C}\right\}  .
\]
Clearly these sets form a partition of $\mathbf{I}$. For unary clones
$\mathcal{C}$ it is an easy task to determine $\mathbf{I}\left(
\mathcal{C}\right)  $: for $\mathcal{C}\subseteq\left\{  0,1,\operatorname{id}%
\right\}  $ we have $\mathbf{I}\left(  \mathcal{C}\right)  =\left\{
\mathcal{C},\mathcal{C}_{=}\right\}  $; for the other unary clones we have
$\mathbf{I}\left(  \mathcal{C}\right)  =\left\{  \mathcal{C}\right\}  $.

\begin{theorem}
\label{thm interval}For any clone $\mathcal{C}$, the set $\mathbf{I}\left(
\mathcal{C}\right)  $ is an interval in the lattice of idempotents. If
$\mathbf{I}\left(  \mathcal{C}\right)  $ has more than one element, then
$\mathcal{C}_{=}$ is its only coatom.
\end{theorem}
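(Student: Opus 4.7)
The strategy is to show that $\mathbf{I}(\mathcal{C})$ is order-convex in $\mathbf{I}$ and has both a maximum and a minimum, so it is a genuine interval, and then to identify the coatom when it exists.

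The maximum is $\mathcal{C}$ itself: being a clone, $\mathcal{C}$ is composition-closed and an equational class, so $\mathcal{C}\in\mathbf{I}$, and $[\mathcal{C}]=\mathcal{C}$ places it in $\mathbf{I}(\mathcal{C})$; any $\mathcal{K}\in\mathbf{I}(\mathcal{C})$ satisfies $\mathcal{K}\subseteq[\mathcal{K}]=\mathcal{C}$. Order-convexity is a sandwich: if $\mathcal{K}_{1}\subseteq\mathcal{K}\subseteq\mathcal{K}_{2}$ with $\mathcal{K}_{1},\mathcal{K}_{2}\in\mathbf{I}(\mathcal{C})$ and $\mathcal{K}\in\mathbf{I}$, monotonicity of $[\cdot]$ forces $\mathcal{C}=[\mathcal{K}_{1}]\subseteq[\mathcal{K}]\subseteq[\mathcal{K}_{2}]=\mathcal{C}$, so $\mathcal{K}\in\mathbf{I}(\mathcal{C})$.

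Next I would settle the coatom. Assuming $|\mathbf{I}(\mathcal{C})|>1$, pick $\mathcal{K}\in\mathbf{I}(\mathcal{C})\setminus\{\mathcal{C}\}$. If $\operatorname{id}\in\mathcal{K}$, equational closure (permutation, identification, introduction of dummy variables) would put every projection into $\mathcal{K}$, and together with composition-closure this would make $\mathcal{K}$ a clone, giving the contradiction $\mathcal{K}=[\mathcal{K}]=\mathcal{C}$. Hence $\mathcal{K}$ is not a clone, so Lemma~\ref{lemma +imp}(1) gives $\mathcal{K}\subseteq\Omega_{=}$, whence $\mathcal{K}\subseteq\mathcal{C}\cap\Omega_{=}=\mathcal{C}_{=}$. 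The class $\mathcal{C}_{=}$ is the intersection of the two idempotents $\mathcal{C}$ and $\Omega_{=}$, so $\mathcal{C}_{=}\in\mathbf{I}$, and the chain $\mathcal{C}=[\mathcal{K}]\subseteq[\mathcal{C}_{=}]\subseteq\mathcal{C}$ yields $[\mathcal{C}_{=}]=\mathcal{C}$, i.e.\ $\mathcal{C}_{=}\in\mathbf{I}(\mathcal{C})$. Because every element of $\mathbf{I}(\mathcal{C})\setminus\{\mathcal{C}\}$ is contained in $\mathcal{C}_{=}$, the class $\mathcal{C}_{=}$ is the unique coatom.

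The remaining task, and the main obstacle, is to produce the minimum. The natural candidate is $\mathcal{K}^{\flat}:=\bigcap\mathbf{I}(\mathcal{C})$, which automatically belongs to $\mathbf{I}$ since $\mathbf{I}$ is closed under arbitrary intersection; I must verify $[\mathcal{K}^{\flat}]=\mathcal{C}$. The case $\mathcal{C}\subseteq\Omega^{(1)}$ is dispatched by the explicit list of unary idempotents preceding Lemma~\ref{lemma +imp}. Otherwise, Proposition~\ref{prop CK} supplies $\mathcal{C}\circ\mathcal{K}=\mathcal{K}$ for each $\mathcal{K}\in\mathbf{I}(\mathcal{C})$, a property that passes to the intersection, so $\mathcal{C}\circ\mathcal{K}^{\flat}=\mathcal{K}^{\flat}$; since $\mathcal{K}^{\flat}$ is itself idempotent, Proposition~\ref{prop gen vs lgen} further yields $[\mathcal{K}^{\flat}]\circ\mathcal{K}^{\flat}=\mathcal{K}^{\flat}$. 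Hence the two clones $[\mathcal{K}^{\flat}]$ and $\mathcal{C}$ act identically on $\mathcal{K}^{\flat}$ by left composition, and together with the inclusion $\mathcal{K}^{\flat}\subseteq\mathcal{C}_{=}$ established in the previous paragraph, a careful analysis should force $[\mathcal{K}^{\flat}]=\mathcal{C}$, completing the identification $\mathbf{I}(\mathcal{C})=[\mathcal{K}^{\flat},\mathcal{C}]$.
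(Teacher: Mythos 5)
Your treatment of the maximum, the order-convexity, and the coatom is correct and essentially identical to the paper's (the paper likewise reduces the coatom statement to Lemma~\ref{lemma +imp}(1) and reduces ``interval'' to ``closed under arbitrary intersections''). The problem is the one step you yourself flag as the main obstacle: showing $\left[ \mathcal{K}^{\flat}\right] =\mathcal{C}$ for $\mathcal{K}^{\flat}=\bigcap \mathbf{I}\left( \mathcal{C}\right) $. You correctly derive $\mathcal{C}\circ \mathcal{K}^{\flat}=\mathcal{K}^{\flat}$ and $\left[ \mathcal{K}^{\flat}\right] \circ \mathcal{K}^{\flat}=\mathcal{K}^{\flat}$, but then assert that these two facts, together with $\mathcal{K}^{\flat}\subseteq \mathcal{C}_{=}$, ``should force'' $\left[ \mathcal{K}^{\flat}\right] =\mathcal{C}$. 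That inference is not valid as stated: two clones acting identically on a class by left composition need not coincide (for instance $\Omega \circ \left\{ 0,1\right\} =\left\{ \operatorname{id}\right\} \circ \left\{ 0,1\right\} =\left\{ 0,1\right\} $), and the inclusion in $\mathcal{C}_{=}$ adds nothing here. The paper itself points out that the conclusion is ``rather surprising'' --- an intersection of generating sets of $\mathcal{C}$ again generates $\mathcal{C}$ --- so this is precisely the nontrivial content of the theorem, and it is left unproved in your proposal.

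The missing idea is to compose on the \emph{right} rather than dwell on left composition. From $\mathcal{C}\circ \mathcal{K}^{\flat}=\mathcal{K}^{\flat}$, compose both sides on the right with the clone $\left[ \mathcal{K}^{\flat}\right] $ (associativity of $\circ$ holds for equational classes) to get $\mathcal{C}\circ \bigl( \mathcal{K}^{\flat}\circ \left[ \mathcal{K}^{\flat}\right] \bigr) =\mathcal{K}^{\flat}\circ \left[ \mathcal{K}^{\flat}\right] $, and then invoke the \emph{second} identity of Proposition~\ref{prop CK}, namely $\mathcal{K}^{\flat}\circ \left[ \mathcal{K}^{\flat}\right] =\left[ \mathcal{K}^{\flat}\right] $, to obtain $\mathcal{C}\circ \left[ \mathcal{K}^{\flat}\right] =\left[ \mathcal{K}^{\flat}\right] $. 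Since $\operatorname{id}\in \left[ \mathcal{K}^{\flat}\right] $, the left-hand side contains $\mathcal{C}$, so $\mathcal{C}\subseteq \left[ \mathcal{K}^{\flat}\right] $; the reverse inclusion is trivial. (One should also note that $\mathcal{K}^{\flat}\nsubseteq \Omega^{\left( 1\right) }$ is needed to apply Proposition~\ref{prop CK} here, which is why the unary clones must be handled separately, as you do.) With this step supplied, your argument matches the paper's proof.
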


\begin{proof}
We have settled the unary case above, so now let us suppose that
$\mathcal{C}\nsubseteq\Omega^{\left(  1\right)  }$. It is clear that the
largest element of $\mathbf{I}\left(  \mathcal{C}\right)  $ is $\mathcal{C}$
itself, and it is also clear that $\mathbf{I}\left(  \mathcal{C}\right)  $ is
convex. Therefore, in order to prove that it is indeed an interval, it
suffices to show that $\mathbf{I}\left(  \mathcal{C}\right)  $ is closed under
arbitrary intersections, hence it has a least element.

Let $\mathcal{K}_{i}\in\mathbf{I}\left(  \mathcal{C}\right)  ~\left(  i\in
I\right)  $, and let $\mathcal{K}=\bigcap\mathcal{K}_{i}$. Then $\mathcal{C}%
\circ\mathcal{K}\subseteq\bigcap\left(  \mathcal{C}\circ\mathcal{K}%
_{i}\right)  =\bigcap\mathcal{K}_{i}=\mathcal{K}$ by Proposition~\ref{prop CK}%
. On the other hand, $\operatorname{id}\in\mathcal{C}$ implies that
$\mathcal{C}\circ\mathcal{K}\supseteq\mathcal{K}$, so we have $\mathcal{C}%
\circ\mathcal{K}=\mathcal{K}$. Composing by $\left[  \mathcal{K}\right]  $ on
the right, we get $\mathcal{C}\circ\mathcal{K}\circ\left[  \mathcal{K}\right]
=\mathcal{K}\circ\left[  \mathcal{K}\right]  $, and using
Proposition~\ref{prop CK} once more, we can conclude that $\mathcal{C}%
\circ\left[  \mathcal{K}\right]  =\left[  \mathcal{K}\right]  $. The left hand
side contains $\mathcal{C}$, since $\operatorname{id}\in\left[  \mathcal{K}%
\right]  $, so we have $\mathcal{C} \subseteq\left[  \mathcal{K}\right]  $.
The containment $\left[  \mathcal{K}\right]  \subseteq\mathcal{C}$ is obvious,
so we see that $\left[  \mathcal{K}\right]  =\mathcal{C}$, and therefore
$\mathcal{K}\in\mathbf{I}\left(  \mathcal{C}\right)  $, as claimed. (Note that
what we have proved is rather surprising:\ if each $\mathcal{K}_{i}$ contains
a generating set of the clone $\mathcal{C}$, then so does $\bigcap
\mathcal{K}_{i}$.)

The statement about the unique coatom follows from Lemma~\ref{lemma +imp}: if
$\mathcal{K}$ is an element of $\mathbf{I}\left(  \mathcal{C}\right)  $ that
is different from $\mathcal{C}$, then $\mathcal{K}$ is a composition-closed
equational class that is not a clone, hence $\mathcal{K}\subseteq
\mathcal{C}\cap\Omega_{=}=\mathcal{C}_{=}$.
\end{proof}

The contents of the above theorem are represented in Figure~\ref{fig typint},
which shows a picture of a typical nontrivial interval $\mathbf{I}\left(
\mathcal{C}\right)  $. As the next corollary shows, Theorem~\ref{thm interval}
allows us to prove that $\mathbf{I}\left(  \mathcal{C}\right)  $ is trivial
for \textquotedblleft many\textquotedblright\ Boolean clones.%
\begin{figure}
[ptb]
\begin{center}
\includegraphics[
height=1.6587in,
width=0.5065in
]%
{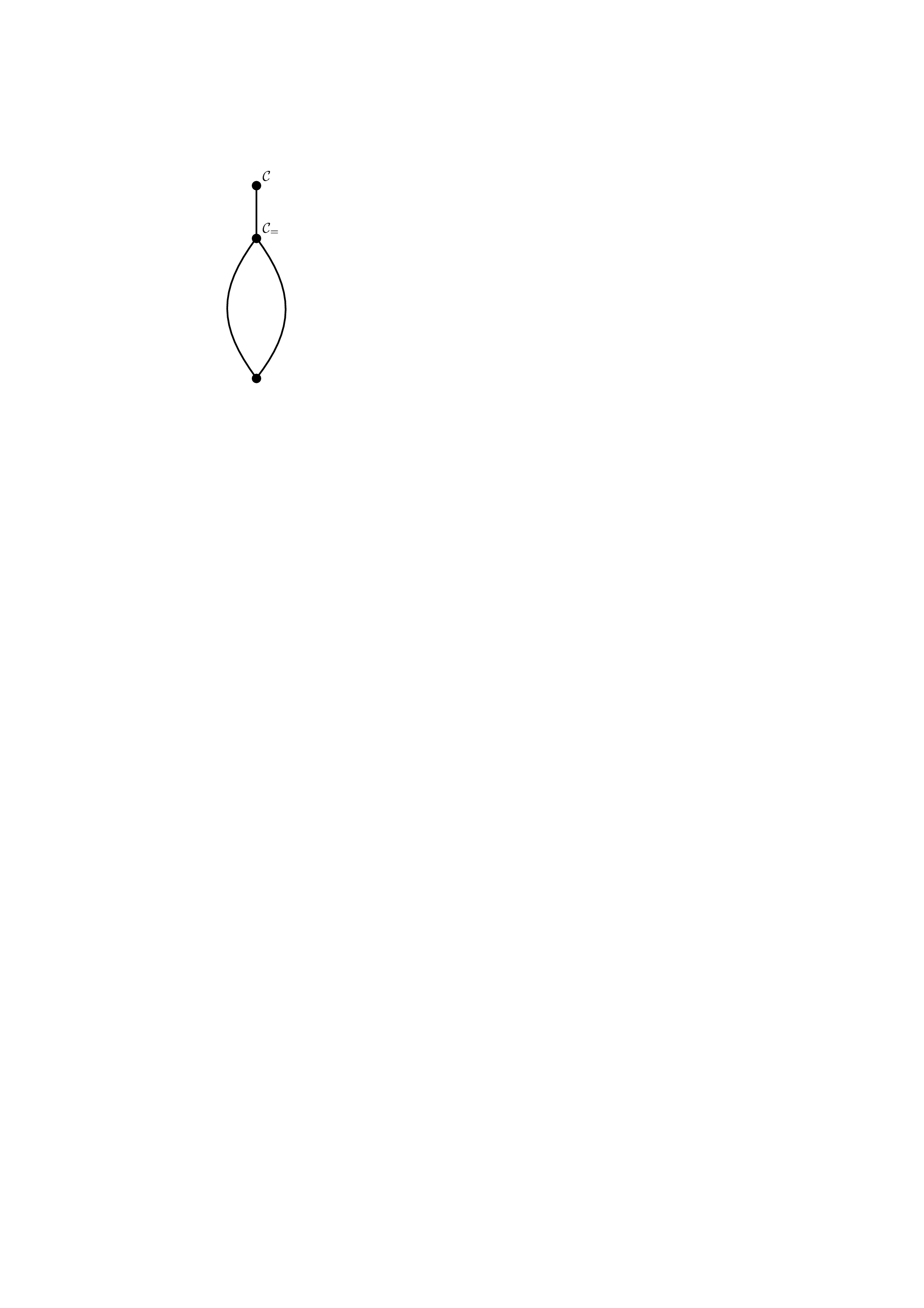}%
\caption{A typical nontrivial interval $\mathbf{I}\left(  \mathcal{C}\right)
$}%
\label{fig typint}%
\end{center}
\end{figure}

\begin{corollary}
If $\mathcal{C}$ is a clone different from $\left\{  \operatorname{id}%
\right\}  ,\allowbreak\left\{  0,\operatorname{id}\right\}  ,\allowbreak
\left\{  1,\operatorname{id}\right\}  ,\allowbreak\left\{
0,1,\operatorname{id}\right\}  ,\allowbreak\Omega,\allowbreak\Omega_{0\ast
},\allowbreak\Omega_{\ast1},\allowbreak L,\allowbreak L_{0\ast},\allowbreak
L_{\ast1},\allowbreak W^{2},\ldots,W^{\infty},\allowbreak U^{2},\ldots
,U^{\infty}$, then $\mathbf{I}\left(  \mathcal{C}\right)  =\left\{
\mathcal{C}\right\}  $.
\end{corollary}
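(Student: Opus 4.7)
The plan is to reformulate the statement via Theorem~\ref{thm interval} and then proceed by a case analysis on where $\mathcal{C}$ sits in the Post lattice. Since $\mathbf{I}(\mathcal{C})$ is an interval with top $\mathcal{C}$ whose only possible coatom is $\mathcal{C}_{=}$, we have $\mathbf{I}(\mathcal{C})=\{\mathcal{C}\}$ if and only if $[\mathcal{C}_{=}]\subsetneq\mathcal{C}$. So the corollary reduces to showing that for every clone $\mathcal{C}$ outside the excluded list, the balanced part $\mathcal{C}_{=}$ is too small to generate $\mathcal{C}$ as a clone.

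I would split the argument into four cases according to which maximal clone contains $\mathcal{C}$. If $\mathcal{C}\subseteq\Omega_{01}$, then every idempotent function has $f(\mathbf{0})=0\neq 1=f(\mathbf{1})$, so $\mathcal{C}_{=}=\emptyset$. If $\mathcal{C}\subseteq S$, every self-dual function satisfies $f(\mathbf{0})=\lnot f(\mathbf{1})$, again giving $\mathcal{C}_{=}=\emptyset$. If $\mathcal{C}\subseteq M$, a monotone function taking the same value on $\mathbf{0}$ and $\mathbf{1}$ is forced to be constant, so $\mathcal{C}_{=}\subseteq\{0,1\}$. Finally, if $\mathcal{C}\subseteq\Omega^{(1)}$, the same bound $\mathcal{C}_{=}\subseteq\{0,1\}$ holds because $\operatorname{id}$ and $\lnot$ do not lie in $\Omega_{=}$. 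In every case $[\mathcal{C}_{=}]$ belongs to $\{\{\operatorname{id}\},\{0,\operatorname{id}\},\{1,\operatorname{id}\},\{0,1,\operatorname{id}\}\}$, so the equality $[\mathcal{C}_{=}]=\mathcal{C}$ would force $\mathcal{C}$ to be one of those four already-excluded clones.

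What remains is to verify, by inspecting the Post lattice (Figure~\ref{fig post}), that every clone outside the excluded list indeed lies in one of $\Omega_{01},S,M,\Omega^{(1)}$. Any proper clone sits below one of the five maximal clones $\Omega_{0\ast},\Omega_{\ast1},M,S,L$; the cases $M,S$ are handled directly. Sub-clones of $L$ are either $L,L_{0\ast},L_{\ast1}$ (all excluded) or fall inside $L_{01}\subseteq\Omega_{01}$, $L\cap S\subseteq S$, or $\Omega^{(1)}$. For $\mathcal{C}\subseteq\Omega_{0\ast}$ not contained in $\Omega_{01}$, identifying variables in any $f\in\mathcal{C}\setminus\Omega_{01}$ produces $0\in\mathcal{C}$, and then one walks through the relevant portion of the Post lattice to conclude that $\mathcal{C}$ is one of the excluded clones $\Omega_{0\ast},L_{0\ast},U^{k},U^{\infty}$ or a sub-clone of $M$; the symmetric argument handles $\mathcal{C}\subseteq\Omega_{\ast1}$, using that $W^{k}\cap\Omega_{0\ast}\subseteq\Omega_{01}$ because $W^{k}\subseteq\Omega_{\ast1}$. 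I expect the main obstacle to be this last Post-lattice bookkeeping: the four-case reduction and the arithmetic within each case are mechanical, but exhausting every node of Figure~\ref{fig post} requires care.
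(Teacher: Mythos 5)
Your proposal is correct and follows essentially the same route as the paper: invoke Theorem~\ref{thm interval} to reduce the question to whether $\mathcal{C}_{=}$ generates $\mathcal{C}$, observe that $\Omega_{01}\cap\Omega_{=}=S\cap\Omega_{=}=\emptyset$ and $M\cap\Omega_{=}=\{0,1\}$ (the paper leaves the unary case to the preceding discussion), and then check against the Post lattice that the clones not covered by these cases are exactly those on the exception list. Your write-up merely makes the final Post-lattice bookkeeping more explicit than the paper does.
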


\begin{proof}
If $\mathbf{I}\left(  \mathcal{C}\right)  $ has at least two elements, then
$\mathcal{C}_{=}\in\mathbf{I}\left(  \mathcal{C}\right)  $ according to
Theorem~\ref{thm interval}. This implies that $\mathcal{C}_{=}$ generates the
clone $\mathcal{C}$. However, for many clones, $\mathcal{C}_{=}=\mathcal{C}%
\cap\Omega_{=}$ is too small to generate $\mathcal{C}$. Indeed, $S\cap
\Omega_{=}=\Omega_{01}\cap\Omega_{=}=\emptyset$ and $M\cap\Omega_{=}=\left\{
0,1\right\}  ,$ therefore (ignoring the unary clones) $\mathbf{I}\left(
\mathcal{C}\right)  $ is trivial unless $\mathcal{C}$ is one of the clones
represented in Figure~\ref{fig post} by a black circle with a single outline
and an empty (non-filled) interior.
\end{proof}

Up to duality, only the cases $\mathcal{C}=\allowbreak\Omega,\allowbreak
\Omega_{\ast1},\allowbreak L,\allowbreak L_{0\ast},\allowbreak W^{2}%
,\ldots,W^{\infty}$ remain. The first four cases are treated in
Section~\ref{sect easy}; we will see that in these cases $\mathbf{I}\left(
\mathcal{C}\right)  $ has at most $3$ elements. In Section~\ref{sect W_infty}
we give a characterization of the elements of $\mathbf{I}\left(  W^{\infty
}\right)  $, and we will prove that this interval is uncountable. We consider
$\mathbf{I}\left(  W^{k}\right)  $ for finite $k$ in Section~\ref{sect W_k},
and we will show that these intervals are finite, but their cardinalities do
not have a common upper bound.

\section{Idempotents corresponding to $\Omega,\Omega_{\ast1},L,L_{0\ast}%
$\label{sect easy}}

As explained in the previous section, $\mathbf{I}\left(  \mathcal{C}\right)  $
can be determined by computing all possible compositions of the form
$\mathcal{C}\circ\mathcal{K}$, where $\mathcal{K}$ is an equational class
generating the clone $\mathcal{C}$. In the following two lemmas we compute six
such compositions for $\mathcal{C}{}=\Omega$ and $\mathcal{C}{}=\Omega_{\ast
1}$, and then we will show that these suffice to determine the intervals
$\mathbf{I}\left(  \Omega\right)  $ and $\mathbf{I}\left(  \Omega_{\ast
1}\right)  $.

\begin{lemma}
\label{lemma Om.->}The following equalities hold:

\begin{enumerate}
\item $\Omega\circ\left\{  \rightarrow\right\}  =\Omega\circ\left\{
\lnot\rightarrow\right\}  =\Omega_{=}$;

\item $\Omega_{\ast1}\circ\left\{  \rightarrow\right\}  =\Omega_{11}$.
\end{enumerate}
\end{lemma}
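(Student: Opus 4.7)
The plan is to split each equality into two containments. For the easy inclusion $(\subseteq)$ in each case, I would observe that every inner function, being equivalent to $\rightarrow$ (respectively $\lnot\rightarrow$), has the form $x_{a}\rightarrow x_{b}$ (respectively $x_{a}\wedge\lnot x_{b}$) with $a\neq b$, and therefore evaluates to $1$ (respectively $0$) at both $\mathbf{0}$ and $\mathbf{1}$. Consequently, for any composite $h=f\left(g_{1},\ldots,g_{n}\right)$ of the relevant form, we have $h\left(\mathbf{0}\right)=h\left(\mathbf{1}\right)=f\left(\mathbf{1}\right)$ (resp.\ $f\left(\mathbf{0}\right)$), placing $h\in\Omega_{=}$. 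For statement (2), the same computation gives $h\left(\mathbf{0}\right)=h\left(\mathbf{1}\right)=f\left(\mathbf{1}\right)=1$, since $f\in\Omega_{\ast1}$, so $h\in\Omega_{11}$.

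For the reverse inclusion $(\supseteq)$, the idea is to exhibit a single universal choice of inner functions that encodes the argument away from $\mathbf{0}$ and $\mathbf{1}$. Concretely, given $h$ of arity $k\geq 2$, I would take $g_{a,b}\left(\mathbf{x}\right)=x_{a}\rightarrow x_{b}$ for all ordered pairs $a\neq b$ from $\left\{1,\ldots,k\right\}$, producing a tuple of length $k\left(k-1\right)$. Since $g_{a,b}\left(\mathbf{x}\right)=0$ iff $x_{a}=1$ and $x_{b}=0$, the resulting tuple is constantly $1$ precisely when $\mathbf{x}\in\left\{\mathbf{0},\mathbf{1}\right\}$, and otherwise it uniquely determines $\mathbf{x}$ via the sets $\left\{a:x_{a}=1\right\}$ and $\left\{b:x_{b}=0\right\}$. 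I would then define $f$ on the image of the encoding by $f\bigl(\left(g_{a,b}\left(\mathbf{x}\right)\right)_{a\neq b}\bigr)=h\left(\mathbf{x}\right)$; this is well-defined exactly because $h\left(\mathbf{0}\right)=h\left(\mathbf{1}\right)$ for $h\in\Omega_{=}$, and $f$ may be extended arbitrarily to the remaining tuples. For $\left\{\lnot\rightarrow\right\}$ the same plan works verbatim with $g_{a,b}\left(\mathbf{x}\right)=x_{a}\wedge\lnot x_{b}$, the only change being that $\mathbf{0}$ and $\mathbf{1}$ now both map to the all-zero tuple. For part (2), the required $1$-preservation of $f$ is automatic: since $h\in\Omega_{11}$, the single value $f\left(1,\ldots,1\right)=h\left(\mathbf{0}\right)=1$ forced on the constant-$1$ tuple is precisely what is needed.

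I do not anticipate serious obstacles. The only minor care point is the edge case $k=1$: no unary function is equivalent to $\rightarrow$, but any unary $h\in\Omega_{=}$ is a constant, hence equivalent to a binary constant, which is covered by the $k=2$ construction. Thus the real content of the proof is the one verification that the family $\left(g_{a,b}\right)_{a\neq b}$ separates $\left\{0,1\right\}^{k}\setminus\left\{\mathbf{0},\mathbf{1}\right\}$ and collapses $\left\{\mathbf{0},\mathbf{1}\right\}$ to a single point, which is a short combinatorial observation.
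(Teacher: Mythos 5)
Your proposal is correct and follows essentially the same route as the paper: the forward inclusion by evaluating the inner implications at $\mathbf{0}$ and $\mathbf{1}$, and the reverse inclusion by feeding $f$ the tuple of all pairwise implications $x_{a}\rightarrow x_{b}$, which collapses $\left\{ \mathbf{0},\mathbf{1}\right\} $ to the all-ones tuple and is injective elsewhere, so that $f$ is well-defined precisely because $h\in\Omega_{=}$. The only (immaterial) differences are that the paper indexes over all $k^{2}$ pairs rather than the $k\left(  k-1\right)  $ off-diagonal ones and recovers $\mathbf{x}$ via $a_{i}\rightarrow a_{2}=\lnot a_{i}$ instead of your two index sets.
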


\begin{proof}
(1) We prove only that $\Omega\circ\left\{  \rightarrow\right\}  =\Omega_{=}$;
the other case is similar. The functions in $\Omega\circ\left\{
\rightarrow\right\}  $ are of the following form:%
\begin{equation}
h\left(  x_{1},\ldots,x_{k}\right)  =f\left(  x_{i_{1}}\rightarrow x_{j_{1}%
},\ldots,x_{i_{n}}\rightarrow x_{j_{n}}\right)  , \label{Om.->}%
\end{equation}
where $f\in\Omega$ and $i_{1},j_{1},\ldots,i_{n},j_{n}\in\left\{
1,\ldots,k\right\}  $. Since $0\rightarrow0=1\rightarrow1$, every function of
this form belongs to $\Omega_{=}$. Conversely, for any given $h\in\Omega_{=}$
we will construct a suitable $f$ so that (\ref{Om.->}) holds. Let us choose
$n=k^{2}$, and let $\left\{  \left(  i_{1},j_{1}\right)  ,\ldots,\left(
i_{n},j_{n}\right)  \right\}  =\left\{  1,\ldots,k\right\}  \times\left\{
1,\ldots,k\right\}  $. Then we can rewrite (\ref{Om.->}) as $h\left(
\mathbf{a}\right)  =f\left(  \widehat{\mathbf{a}}\right)  $ for all
$\mathbf{a}=\left(  a_{1},\ldots,a_{k}\right)  \in\left\{  0,1\right\}  ^{k}$,
where $\widehat{\mathbf{a}}$ denotes the vector formed from all possible
implications between entries of $\mathbf{a}$. Now we can treat this as the
definition of $f$:%
\[
f\left(  \mathbf{b}\right)  :=\left\{  \!\!%
\begin{array}
[c]{ll}%
h\left(  \mathbf{a}\right)  , & \text{if }\mathbf{b}=\widehat{\mathbf{a}%
}\text{;}\\
0, & \text{if }\nexists\mathbf{a}\in\left\{  0,1\right\}  ^{k}:\mathbf{b}%
=\widehat{\mathbf{a}}.
\end{array}
\right.
\]
All we need to show is that $f$ is well-defined, i.e., that $\mathbf{b}%
=\widehat{\mathbf{a}}$ determines $h\left(  \mathbf{a}\right)  $ uniquely.

If $\widehat{\mathbf{a}}=\left(  1,\ldots,1\right)  $, then $a_{1}%
=\cdots=a_{k}$, but we cannot tell whether $\mathbf{a}=\mathbf{0}$ or
$\mathbf{a}=\mathbf{1}$. However, since $h\in\Omega_{=}$, the value of
$h\left(  \mathbf{a}\right)  $ is the same in both cases. Now let us suppose
that at least one entry of $\widehat{\mathbf{a}}$ is $0$, say, $a_{1}%
\rightarrow a_{2}=0$. Then we can infer immediately that $a_{1}=1$ and
$a_{2}=0$. Using this information we can recover the vector $\mathbf{a}$,
since $a_{i}\rightarrow a_{2}=\lnot a_{i}$ for all $i\in\left\{
1,\ldots,k\right\}  $. Thus in this case $\widehat{\mathbf{a}}$ uniquely
determines $\mathbf{a}$, hence $h\left(  \mathbf{a}\right)  $ is uniquely
determined as well.

(2) We can proceed similarly as above, with $h\in\Omega_{11}$ and $f\in
\Omega_{\ast1}$.
\end{proof}

\begin{lemma}
\label{lemma Om.+}The following equalities hold:

\begin{enumerate}
\item $\Omega\circ\left\{  +\right\}  =\Omega\circ\left\{  \lnot+\right\}
=\mathcal{R}$;

\item $\Omega_{\ast1}\circ\left\{  \lnot+\right\}  =\mathcal{R}_{11}$.
\end{enumerate}
\end{lemma}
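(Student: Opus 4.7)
The plan is to follow the template of the proof of Lemma~\ref{lemma Om.->} almost verbatim, replacing $\rightarrow$ by $+$ (or $\lnot+$) and $\Omega_{=}$ by $\mathcal{R}$ (respectively $\mathcal{R}_{11}$). The key identity that drives the easy direction is $\lnot a+\lnot b=a+b$: simultaneously negating all variables leaves every inner function $x_{i_\ell}+x_{j_\ell}$ (and also $\lnot(x_{i_\ell}+x_{j_\ell})$) unchanged, so any composition $f(x_{i_1}+x_{j_1},\ldots,x_{i_n}+x_{j_n})$ is automatically reflexive, and the same holds with $\lnot+$. This yields the inclusions $\Omega\circ\{+\},\,\Omega\circ\{\lnot+\}\subseteq\mathcal{R}$. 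For part~(2) one additionally uses $\lnot(a+a)=1$: at the inputs $\mathbf{0}$ and $\mathbf{1}$ every inner argument $\lnot(x_{i_\ell}+x_{j_\ell})$ evaluates to $1$, so if $f\in\Omega_{\ast1}$ the whole composition takes the value $f(\mathbf{1})=1$ at both constant inputs, placing it in $\mathcal{R}_{11}$.

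For the reverse inclusions, I mimic the construction from the proof of Lemma~\ref{lemma Om.->}. Given $h$ of arity $k$, I choose $n=k^{2}$ and let $(i_{1},j_{1}),\ldots,(i_{n},j_{n})$ enumerate $\{1,\ldots,k\}^{2}$; for $\mathbf{a}\in\{0,1\}^{k}$ set
\[
\widehat{\mathbf{a}}=(a_{i_{1}}+a_{j_{1}},\ldots,a_{i_{n}}+a_{j_{n}}),
\]
and use $1+a_{i_{\ell}}+a_{j_{\ell}}$ in the $\lnot+$ case. I then try to define $f$ by $f(\widehat{\mathbf{a}}):=h(\mathbf{a})$ on the image of the map $\mathbf{a}\mapsto\widehat{\mathbf{a}}$ and by $f(\mathbf{b}):=0$ outside this image; the equation $h(\mathbf{a})=f(\widehat{\mathbf{a}})$ then gives the desired factorisation.

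The substance of the proof is the well-definedness of $f$, and this is exactly the step where reflexivity is used, so I expect it to be the main obstacle. I would verify that $\mathbf{a}\mapsto\widehat{\mathbf{a}}$ is at most two-to-one with fibres of the form $\{\mathbf{a},\lnot\mathbf{a}\}$: if $\widehat{\mathbf{a}}=\widehat{\mathbf{a}'}$, comparing the coordinates indexed by the pairs $(1,j)$ gives $a_{1}+a_{j}=a_{1}'+a_{j}'$ for every $j$, so either $\mathbf{a}=\mathbf{a}'$ (when $a_{1}=a_{1}'$) or $\mathbf{a}'=\lnot\mathbf{a}$ (when $a_{1}\neq a_{1}'$); in both cases $h\in\mathcal{R}$ yields $h(\mathbf{a})=h(\mathbf{a}')$. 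For part~(2) it remains to secure $f\in\Omega_{\ast1}$; the only $\mathbf{a}$'s mapped to the all-ones vector in the $\lnot+$-encoding are $\mathbf{0}$ and $\mathbf{1}$, both of which give $h=1$ by $h\in\mathcal{R}_{11}$, so $f(\mathbf{1})=1$ is forced and the default value $0$ at the remaining points preserves this. I do not anticipate any further difficulty.
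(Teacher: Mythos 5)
Your proposal is correct and follows exactly the paper's approach: the paper's own proof is a brief remark that one reuses the argument of Lemma~\ref{lemma Om.->} with $\widehat{\mathbf{a}}$ now being the vector of all pairwise sums, which determines $\mathbf{a}$ only up to negation — precisely the ambiguity absorbed by $h\in\mathcal{R}$. Your write-up merely supplies the details (the fibre computation via the pairs $(1,j)$ and the $\Omega_{\ast1}$ check for part~(2)) that the paper leaves implicit.
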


\begin{proof}
We can use the same argument as in the previous proposition. For example,
considering $\Omega\circ\left\{  +\right\}  =\mathcal{R}$, we have%
\[
h\left(  x_{1},\ldots,x_{k}\right)  =f\left(  x_{i_{1}}+x_{j_{1}}%
,\ldots,x_{i_{n}}+x_{j_{n}}\right)  ,
\]
in place of (\ref{Om.->}), and we define $\widehat{\mathbf{a}}\in\left\{
0,1\right\}  ^{k^{2}}$ to be the vector formed from all sums of two entries of
$\mathbf{a}$. Now $\widehat{\mathbf{a}}$ tells us which entries of
$\mathbf{a}$ are the same, but does not tell us which one is $0$ and which one
is $1$. Therefore $\widehat{\mathbf{a}}$ determines $\mathbf{a}$ up to
negation, and this is equivalent to the condition $h\in\mathcal{R}$.
\end{proof}

\begin{theorem}
If $\mathcal{C}{}=\Omega$ or $\mathcal{C}{}=\Omega_{\ast1}$, then
$\mathbf{I}\left(  \mathcal{C}\right)  $ is a three-element chain:
$\mathbf{I}\left(  \mathcal{C}\right)  =\left\{  \mathcal{C},\mathcal{C}%
_{=},\mathcal{C}\cap\mathcal{R}\right\}  $.
\end{theorem}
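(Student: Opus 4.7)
My plan is to use Theorem~\ref{thm interval} as the structural scaffold: it tells us $\mathbf{I}(\mathcal{C})$ is an interval with top $\mathcal{C}$ and (when nontrivial) coatom $\mathcal{C}_{=}$. So after verifying the three candidates lie in $\mathbf{I}(\mathcal{C})$ and form the strict chain $\mathcal{C}\cap\mathcal{R}\subsetneq\mathcal{C}_{=}\subsetneq\mathcal{C}$, the only remaining task is to identify $\mathcal{C}\cap\mathcal{R}$ as the bottom of this interval and to rule out any intermediate idempotent.

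First I would verify that each of the three classes is an idempotent generating $\mathcal{C}$. Since $\Omega_{=}$ and $\mathcal{R}$ are strongly defined by relational constraints (as noted in the preceding examples), each candidate is a finite intersection of idempotents, hence itself idempotent. Lemma~\ref{lemma Om.->} gives $\mathcal{C}\circ\{\rightarrow\}=\mathcal{C}_{=}$, and Lemma~\ref{lemma Om.+} gives $\mathcal{C}\circ\{h\}=\mathcal{C}\cap\mathcal{R}$ for $h=+$ when $\mathcal{C}=\Omega$ and $h=\lnot+$ when $\mathcal{C}=\Omega_{\ast 1}$; in particular each class contains the generator $h$, and a short inspection of Post's lattice shows the class escapes every maximal subclone of $\mathcal{C}$, so its generated clone is $\mathcal{C}$. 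The chain $\mathcal{C}\cap\mathcal{R}\subseteq\mathcal{C}_{=}\subseteq\mathcal{C}$ follows because any reflexive $f$ satisfies $f(\mathbf{0})=f(\lnot\mathbf{0})=f(\mathbf{1})$, giving $\mathcal{R}\subseteq\Omega_{=}$; strictness is witnessed by $\rightarrow\in\mathcal{C}_{=}\setminus\mathcal{R}$ and $\operatorname{id}\in\mathcal{C}\setminus\mathcal{C}_{=}$.

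To rule out a fourth member, let $\mathcal{K}\in\mathbf{I}(\mathcal{C})\setminus\{\mathcal{C}\}$. Theorem~\ref{thm interval} gives $\mathcal{K}\subseteq\mathcal{C}_{=}$, and $[\mathcal{K}]=\mathcal{C}\nsubseteq\Omega^{(1)}$ combined with Lemma~\ref{lemma +imp}(2) supplies a function from $\{+,\lnot+,\rightarrow,\lnot\rightarrow\}\cap\mathcal{C}_{=}$ in $\mathcal{K}$ (the candidates reduce to $\{\rightarrow,\lnot+\}$ when $\mathcal{C}=\Omega_{\ast 1}$, as $+,\lnot\rightarrow\notin\Omega_{11}$). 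If $\rightarrow$ or $\lnot\rightarrow$ lies in $\mathcal{K}$, Proposition~\ref{prop CK} and Lemma~\ref{lemma Om.->} yield $\mathcal{K}=\mathcal{C}\circ\mathcal{K}\supseteq\mathcal{C}\circ\{\rightarrow\}=\mathcal{C}_{=}$, forcing $\mathcal{K}=\mathcal{C}_{=}$. Otherwise $+$ or $\lnot+$ is in $\mathcal{K}$ but no implication is, and Lemma~\ref{lemma Om.+} analogously gives $\mathcal{K}\supseteq\mathcal{C}\cap\mathcal{R}$.

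The main obstacle is then proving $\mathcal{K}\subseteq\mathcal{R}$ in this last subcase, which I would handle by an identification-minor argument. Assume for contradiction that some $f\in\mathcal{K}$ is non-reflexive, so there exists $\mathbf{a}$ with $f(\mathbf{a})\neq f(\lnot\mathbf{a})$; since $f\in\Omega_{=}$ we necessarily have $\mathbf{a}\neq\mathbf{0},\mathbf{1}$, so both $\{i:a_i=0\}$ and $\{i:a_i=1\}$ are non-empty. Identifying the variables in these two index sets to fresh $x$ and $y$ produces a binary subfunction $g\preceq f$ with $g(0,0)=g(1,1)$ and $g(0,1)\neq g(1,0)$. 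A direct enumeration of binary Boolean functions satisfying these two conditions shows that $g$ is equivalent to $\rightarrow$ or $\lnot\rightarrow$; the additional restriction $g\in\Omega_{11}$ that applies when $\mathcal{C}=\Omega_{\ast 1}$ leaves only $\rightarrow$. Since $\mathcal{K}$ is an equational class $g\in\mathcal{K}$, contradicting the hypothesis of this subcase. Therefore $\mathcal{K}\subseteq\mathcal{R}$, and combined with the reverse inclusion we conclude $\mathcal{K}=\mathcal{C}\cap\mathcal{R}$.
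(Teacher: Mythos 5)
Your proposal is correct and follows essentially the same route as the paper: the same case split on which of $+,\lnot+,\rightarrow,\lnot\rightarrow$ lies in $\mathcal{K}$, the same use of Lemmas~\ref{lemma Om.->} and~\ref{lemma Om.+} via $\mathcal{K}=\mathcal{C}\circ\mathcal{K}$ to pin down the lower bounds, and the same binary identification-minor construction to show $\mathcal{K}\subseteq\mathcal{R}$ (you phrase it contrapositively, the paper directly, but the argument is identical). The only cosmetic slip is writing $\mathcal{C}\circ\{\rightarrow\}$ in the subcase where only $\lnot\rightarrow\in\mathcal{K}$; the lemma of course covers $\mathcal{C}\circ\{\lnot\rightarrow\}$ as well.
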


\begin{proof}
Let us consider first $\mathcal{C}{}=\Omega$. If $\mathcal{K}{}\in
\mathbf{I}\left(  \Omega\right)  $, then $\Omega\circ\mathcal{K}=\mathcal{K}$
by Proposition~\ref{prop CK}. If $\mathcal{K}$ is a clone, then clearly we
have $\mathcal{K}{}=\Omega$. If this is not the case, then $\mathcal{K}%
{}\subseteq\Omega_{=}$, and at least one of the operations $+,\lnot
\rightarrow,\lnot+,\rightarrow$ belongs to $\mathcal{K}$ by
Lemma~\ref{lemma +imp}. If $\rightarrow\in\mathcal{K}{}$ or $\lnot
\rightarrow\in\mathcal{K}{}$, then $\mathcal{K}{}=\Omega\circ\mathcal{K}%
{}\supseteq\Omega_{=}$ by Lemma~\ref{lemma Om.->}, hence $\mathcal{K}{}%
=\Omega_{=}$. In the remaining cases the binary operations in $\mathcal{K}$
are $+$ and/or $\lnot+$, hence $\mathcal{K}=\Omega\circ\mathcal{K}%
\supseteq\mathcal{R}{}$ by Lemma~\ref{lemma Om.+}. On the other hand, in these
cases every binary operation in $\mathcal{K}{}$ is reflexive. We will show
that this implies $\mathcal{K}{}\subseteq\mathcal{R}{}$.

If $f\left(  x_{1},\ldots,x_{n}\right)  \in\mathcal{K}{}$ and $\mathbf{a}%
\in\left\{  0,1\right\}  ^{n}$, then $f\left(  \mathbf{a}\right)  =g\left(
0,1\right)  $, where $g\left(  x,y\right)  $ is obtained from $f\left(
x_{1},\ldots,x_{n}\right)  $ by replacing $x_{i}$ by $x$ or $y$ depending on
whether $a_{i}=0$ or $a_{i}=1$. Since $g$ is a subfunction of $f$, we have
$g\in\mathcal{K}{}$, and then the reflexivity of $g$ implies $f\left(
\lnot\mathbf{a}\right)  =g\left(  1,0\right)  =g\left(  0,1\right)  =f\left(
\mathbf{a}\right)  $. This is true for all $\mathbf{a\in}\left\{  0,1\right\}
^{n}$, thus $f$ is reflexive, as claimed.

The above considerations show that the only possible elements of
$\mathbf{I}\left(  \Omega\right)  $ are $\Omega,\Omega_{=}$ and $\mathcal{R}$.
It is easily verified that each of these three classes indeed generate the
clone $\Omega$, hence $\mathbf{I}\left(  \Omega\right)  =\left\{
\Omega,\Omega_{=},\mathcal{R}\right\}  $.

If $\mathcal{C}=\Omega_{\ast1}$, then $+,\lnot\rightarrow$ cannot belong to
$\mathcal{K}{}$. A similar argument as the above one shows that $\mathcal{K}%
{}=\Omega_{\ast1}$ if $\operatorname{id}\in\mathcal{K}{}$, $\mathcal{K}%
{}=\Omega_{\ast1}\cap\Omega_{=}=\Omega_{11}$ if $\operatorname{id}%
\notin\mathcal{K}{}$ but $\rightarrow\in\mathcal{K}{}$, and $\mathcal{K}%
{}=\Omega_{\ast1}\cap\mathcal{R}=\mathcal{R}_{11}$ otherwise. Thus we have
$\mathbf{I}\left(  \Omega_{\ast1}\right)  =\left\{  \Omega_{\ast1},\Omega
_{11},\mathcal{R}_{11}\right\}  $.
\end{proof}

As one can expect, the case of linear functions is considerably simpler.

\begin{theorem}
If $\mathcal{C}=L$ or $\mathcal{C}=L_{0\ast}$ then $\mathbf{I}\left(
\mathcal{C}\right)  $ is a two-element chain: $\mathbf{I}\left(
\mathcal{C}\right)  =\left\{  \mathcal{C},\mathcal{C}_{=}\right\}  $.
\end{theorem}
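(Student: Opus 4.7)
Plan: By Theorem~\ref{thm interval}, $\mathbf{I}(\mathcal{C})$ is an interval with top $\mathcal{C}$ whose only coatom, when it exists, is $\mathcal{C}_{=}$. The claim thus amounts to showing (a)~$\mathcal{C}_{=}\in\mathbf{I}(\mathcal{C})$, and (b)~$\mathcal{C}_{=}$ is also the minimum of $\mathbf{I}(\mathcal{C})$.

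For (a), $\mathcal{C}_{=}=\mathcal{C}\cap\Omega_{=}$ is the intersection of two composition-closed classes ($\mathcal{C}$ is a clone; $\Omega_{=}$ is composition-closed, as noted after Theorem~\ref{thm galois connection}), and the intersection of two composition-closed equational classes is again composition-closed. The equality $[\mathcal{C}_{=}]=\mathcal{C}$ is an easy verification: $L_{00}$ already contains $x+y$, iterating which with projections produces every $x_1+\cdots+x_n$ together with the constant $0$, yielding $L_{0\ast}$; adjoining the element $x+y+1\in L_{=}\setminus L_{00}$ further produces the constant $1$ and hence every affine function, yielding $L$.

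For (b), fix a non-clone $\mathcal{K}\in\mathbf{I}(\mathcal{C})$. Lemma~\ref{lemma +imp}(1) gives $\mathcal{K}\subseteq\mathcal{C}_{=}$, so what must be shown is the reverse inclusion. Since $[\mathcal{K}]=\mathcal{C}\nsubseteq\Omega^{(1)}$, Lemma~\ref{lemma +imp}(2) forces $\mathcal{K}\cap\{+,\lnot+,\rightarrow,\lnot\rightarrow\}\neq\emptyset$, and as $\rightarrow,\lnot\rightarrow\notin L$ this reduces to $+\in\mathcal{K}$ or $\lnot+\in\mathcal{K}$. If $\mathcal{C}=L_{0\ast}$, only $+$ is possible, and Example~\ref{ex lgen+} gives $L_{00}=\lfloor+\rfloor\subseteq\mathcal{K}$, finishing that case. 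If $\mathcal{C}=L$, the two subcases are symmetric (swapping the roles of $L_{00}$ and $L_{11}$), so I assume $+\in\mathcal{K}$, whence $L_{00}\subseteq\mathcal{K}$. Since $[L_{00}]=L_{0\ast}\subsetneq L=[\mathcal{K}]$, $\mathcal{K}$ must meet $L_{11}$ too. To conclude I manufacture $\lnot+$ inside $\mathcal{K}$ via Proposition~\ref{prop CK} (which permits arbitrary $L$-outer functions over $\mathcal{K}$-inners) using the cancellation identity
\[
(z_1+z_2+1)\bigl(x+w,\,y+w\bigr)=x+y+1,
\]
whose inner operands lie in $L_{00}\subseteq\mathcal{K}$; the resulting ternary function has $w$ as an inessential variable and is therefore equivalent to $\lnot+$. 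Example~\ref{ex lgen+} then yields $L_{11}=\lfloor\lnot+\rfloor\subseteq\mathcal{K}$, whence $\mathcal{K}=L_{=}$.

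The one genuinely nontrivial step is the last: extracting $\lnot+$ from a class $\mathcal{K}$ whose known basic binary operation is $+$ alone. The point is that although $\mathcal{K}$ is not a clone, Proposition~\ref{prop CK} still allows us to use \emph{every} $L$-function as an outer operation, and the auxiliary variable $w$ is slipped in precisely so that $w+w=0$ cancels, returning (up to an inessential variable) the desired negated sum.
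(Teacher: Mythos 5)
Your proof is correct, and apart from one step it follows the same route as the paper: both arguments use Theorem~\ref{thm interval} (equivalently Lemma~\ref{lemma +imp}) to squeeze a non-clone $\mathcal{K}\in\mathbf{I}(\mathcal{C})$ between $\lfloor +\rfloor$ and $\mathcal{C}_{=}$, and both rely on Example~\ref{ex lgen+} for $\lfloor +\rfloor=L_{00}$ and $\lfloor \lnot+\rfloor=L_{11}$. The $\mathcal{C}=L_{0\ast}$ case is identical in the two treatments.

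Where you diverge is the $\mathcal{C}=L$ case, in showing that $\mathcal{K}$ must contain \emph{both} $L_{00}$ and $L_{11}$. The paper argues indirectly: if $\mathcal{K}$ contained only one of $\lfloor +\rfloor,\lfloor \lnot+\rfloor$, it would essentially be confined to $\Omega_{00}$ or $\Omega_{11}$ and so could not generate $L$. You instead produce $\lnot+$ directly from $+$ by the explicit composition $(z_1+z_2+1)(x+w,\,y+w)=x+y+1$, which is legitimate because Proposition~\ref{prop CK} gives $L\circ\mathcal{K}=\mathcal{K}$ (the outer function $z_1+z_2+1$ lies in $L=[\mathcal{K}]$, the inner functions are $+$ with a dummy variable adjoined, hence in the equational class $\mathcal{K}$, and the result has $w$ inessential). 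This is a genuinely different, and arguably cleaner, way to close that case: it is constructive, it makes your intermediate observation that $\mathcal{K}$ meets $L_{11}$ superfluous, and it sidesteps the small point the paper leaves implicit (namely that $\lnot+\notin\mathcal{K}$ only forces $\mathcal{K}\cap L_{11}\subseteq\{1\}$ rather than $\mathcal{K}\subseteq\Omega_{00}$ outright, so one must still rule out $1\in\mathcal{K}$). The symmetry you invoke to reduce to the subcase $+\in\mathcal{K}$ is the standard dualization $f\mapsto\lnot f(\lnot x_1,\ldots,\lnot x_n)$, which indeed fixes $L$ and swaps $L_{00}$ with $L_{11}$, so that reduction is sound.
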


\begin{proof}
If $\mathcal{K}{}\in\mathbf{I}\left(  L_{0\ast}\right)  \setminus\left\{
L_{0\ast}\right\}  $, then $\mathcal{K}{}\subseteq L_{0\ast}\cap\Omega_{=}$ by
Theorem~\ref{thm interval}. However, $L_{0\ast}\cap\Omega_{=}=L_{00}%
=\left\lfloor +\right\rfloor $ (see Example~\ref{ex lgen+}), so we can
conclude that $\mathcal{K}{}\subseteq\left\lfloor +\right\rfloor $. On the
other hand, since $\mathcal{K}{}\subseteq\Omega_{00}$, it contains at least
one of the functions $+,\lnot\rightarrow$ by Lemma~\ref{lemma +imp}. Clearly
$\lnot\rightarrow\notin\mathcal{K}$, so we must have $+\in\mathcal{K}{}$, and
then $\left\lfloor +\right\rfloor \subseteq\mathcal{K}{}$. Thus $\mathcal{K}%
{}=\left\lfloor +\right\rfloor =L_{00}$, hence $\mathbf{I}\left(  L_{0\ast
}\right)  =\left\{  L_{0\ast},L_{00}\right\}  $.

Now let $\mathcal{K}{}\in\mathbf{I}\left(  L\right)  \setminus\left\{
L\right\}  $. Then $\mathcal{K}{}\subseteq L\cap\Omega_{=}=L_{00}\cup
L_{11}=\left\lfloor +\right\rfloor \cup\left\lfloor \lnot+\right\rfloor $.
Similarly to the previous case we see that $\left\lfloor +\right\rfloor
\subseteq\mathcal{K}$ or $\left\lfloor \lnot+\right\rfloor \subseteq
\mathcal{K}$. If only one of these held, then $\mathcal{K}{}$ would be a
subset of either $\Omega_{00}$ or $\Omega_{11}$, contradicting $\left[
\mathcal{K}\right]  =L$. So we must have $\mathcal{K}=\left\lfloor
+\right\rfloor \cup\left\lfloor \lnot+\right\rfloor =L_{=}$, hence
$\mathbf{I}\left(  L\right)  =\left\{  L,L_{=}\right\}  $.
\end{proof}

\section{Idempotents corresponding to $W^{\infty}$\label{sect W_infty}}

In the above cases for each given clone $\mathcal{C}$, the interval
$\mathbf{I}\left(  \mathcal{C}\right)  $ was finite. For $\mathcal{C}%
=W^{\infty}$ the situation is more complicated: we will prove in this section
that there are continuously many idempotents generating the clone $W^{\infty}%
$. In the next section we will see that the intervals $\mathbf{I}\left(
W^{k}\right)  $ are finite, but their sizes tend to infinity as $k\rightarrow
\infty$. In these two sections we will often work with the set of zeros (set
of \textquotedblleft false points\textquotedblright) of functions, so let us
set up some notation for this.

For an $n$-ary Boolean function $f$, let $f^{-1}\left(  0\right)  =\left\{
\mathbf{a}\in\left\{  0,1\right\}  ^{n}:f\left(  \mathbf{a}\right)
=0\right\}  $. We will often treat this set as a matrix: writing the elements
of $f^{-1}\left(  0\right)  $ as row vectors below each other, we obtain an
$m\times n$ matrix, where $m=\left\vert f^{-1}\left(  0\right)  \right\vert $.
The order of the rows is irrelevant, moreover, since we do not need to
distinguish between equivalent functions, we may rearrange the columns as well
(by permuting variables). Let us observe that $f\in W^{\infty}$ iff
$f^{-1}\left(  0\right)  $ has a constant $0$ column, and $f\in W^{k}$ iff
every matrix formed from at most $k$ rows of $f^{-1}\left(  0\right)  $ has a
constant $0$ column.

Next we introduce some operators on function classes that deal with zeros of
functions. For $k\geq2$ and $\mathcal{K}\subseteq\Omega$ let $Z_{k}%
\mathcal{K}$ denote the set of functions $f\in\Omega$ such that for every at
most $k$-element subset $H\subseteq f^{-1}\left(  0\right)  $ there exists a
function $g\in\mathcal{K}$ of the same arity as $f$ with $H\subseteq
g^{-1}\left(  0\right)  $. Furthermore, let $Z_{\infty}\mathcal{K}$ be the set
of functions $f\in\Omega$ such that there exists a function $g\in\mathcal{K}$
of the same arity as $f$ with $f^{-1}\left(  0\right)  \subseteq g^{-1}\left(
0\right)  $. Clearly, $Z_{2},Z_{3},\ldots,Z_{\infty}$ are closure operators on
$\Omega$, and for any $\mathcal{K}\subseteq\Omega$ we have%
\begin{equation}
Z_{2}\mathcal{K}\supseteq Z_{3}\mathcal{K}\supseteq\cdots\supseteq Z_{\infty
}\mathcal{K}=\bigcap_{k\geq2}\left(  Z_{k}\mathcal{K}\right)  .
\label{eq Z_2...Z_infty}%
\end{equation}

Proposition~\ref{prop W_infty} below describes $\mathbf{I}\left(  W^{\infty
}\right)  $ with the help of the operator $Z_{\infty}$.

\begin{lemma}
\label{lemma ize}If $\mathcal{K}$ is an idempotent equational class such that
$\left[  \mathcal{K}\right]  =W^{k}$ for some $k\in\left\{  2,3,\ldots
,\infty\right\}  $, then $Z_{\infty}\mathcal{K}=\mathcal{K}$.
\end{lemma}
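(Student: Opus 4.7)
The containment $\mathcal{K} \subseteq Z_{\infty}\mathcal{K}$ is immediate (take $g=f$), so the work is to prove $Z_{\infty}\mathcal{K} \subseteq \mathcal{K}$. I would fix an $n$-ary $f \in Z_{\infty}\mathcal{K}$ together with an $n$-ary $g \in \mathcal{K}$ satisfying $f^{-1}(0) \subseteq g^{-1}(0)$, and aim to exhibit $f$ as a composition $h(g_{1},\ldots,g_{m})$ with $h \in W^{k}$ and $g_{1},\ldots,g_{m} \in \mathcal{K}$; Proposition~\ref{prop CK} then places $f$ in $W^{k} \circ \mathcal{K} = \mathcal{K}$. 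If $\mathcal{K}$ is already a clone, then $\mathcal{K}=W^{k}$ and no construction is needed: every $k$ zeros of $f$ lie in $g^{-1}(0)$ and hence share a common $0$ column by $g \in W^{k}$, so $f \in W^{k} = \mathcal{K}$.

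\textbf{Locating $\mathcal{K}$ in the non-clone case.} Assume from now on that $\mathcal{K}$ is not a clone. By Lemma~\ref{lemma +imp}, at least one of $+,\lnot+,\rightarrow,\lnot\rightarrow$ belongs to $\mathcal{K}$, and a glance at the zero-set matrices rules out all but $\rightarrow$ (for instance the zeros $(0,0),(1,1)$ of $+$ have no common $0$ column, so $+\notin W^{2}\supseteq W^{k}$). Hence $\rightarrow \in \mathcal{K}$. Next, every function in $W^{k}$ takes the value $1$ at $\mathbf{1}$: the row $\mathbf{1}$, taken as a one-row submatrix, contains no $0$, so it cannot lie in the zero set of any member of $W^{k}$. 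Combined with $\mathcal{K} \subseteq \Omega_{=}$, this forces $\mathcal{K} \subseteq \Omega_{11}$. Since $\mathbf{0},\mathbf{1} \notin g^{-1}(0) \supseteq f^{-1}(0)$, we conclude $f(\mathbf{0})=f(\mathbf{1})=1$ as well.

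\textbf{Building $h$.} Following the pattern of Lemma~\ref{lemma Om.->}, I would take as inner functions $g$ together with the $n^{2}$ implications $x_{i} \rightarrow x_{j}$ for $1 \le i,j \le n$, each regarded as an $n$-ary function; all of these lie in $\mathcal{K}$ (the implications as subfunctions of $\rightarrow$). The joint map $G(\mathbf{a}) = (g(\mathbf{a}), (a_{i} \rightarrow a_{j})_{i,j})$ cannot distinguish $\mathbf{0}$ from $\mathbf{1}$, but by the argument of Lemma~\ref{lemma Om.->} it separates any two distinct points outside $\{\mathbf{0},\mathbf{1}\}$. Set $h(G(\mathbf{a})) = f(\mathbf{a})$ on the image of $G$ and $h = 1$ elsewhere; this is unambiguous because $f(\mathbf{0}) = f(\mathbf{1}) = 1$. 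Then $h^{-1}(0) = \{G(\mathbf{a}) : \mathbf{a} \in f^{-1}(0)\}$, and the first coordinate of every such row is $g(\mathbf{a}) = 0$, thanks to $f^{-1}(0) \subseteq g^{-1}(0)$. This constant $0$ column places $h$ in $W^{\infty} \subseteq W^{k}$, so Proposition~\ref{prop CK} yields $f = h(g, x_{1} \rightarrow x_{1}, \ldots, x_{n} \rightarrow x_{n}) \in W^{k} \circ \mathcal{K} = \mathcal{K}$.

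\textbf{Main obstacle.} The delicate step is arranging $h \in W^{k}$ while being forbidden from using projections as inner functions. Using only the implications $x_{i} \rightarrow x_{j}$ (as in Lemma~\ref{lemma Om.->}) would recover $\mathbf{a}$ from $\widehat{\mathbf{a}}$ but would leave $h \in W^{k}$ delicate to verify; prepending $g$ itself to the tuple of inner functions is the clean trick, since it manufactures a constant $0$ column in $h^{-1}(0)$ and lands $h$ inside $W^{\infty}$.
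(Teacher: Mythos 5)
Your proof is correct, but it takes a genuinely different route from the paper's. The paper first reduces to the case $\left\vert g^{-1}\left(0\right)\setminus f^{-1}\left(0\right)\right\vert =1$ and then realizes $f$ by the explicit formula $\bigl(\bigvee g\left(x_{i},\ldots,x_{i},y_{j},\ldots,y_{j}\right)\bigr)\rightarrow g\left(x_{1},\ldots,x_{l},y_{1},\ldots,y_{n-l}\right)$: the outer function is the concrete $W^{\infty}$-function $\left(\bigvee z_{k}\right)\rightarrow w$, the inner functions are identification minors of $g$ together with $g$ itself, and the non-clone hypothesis enters via the observation that $g\left(\mathbf{0}\right)=1$ (otherwise $\operatorname{id}\preceq g$). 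You instead dispose of the whole containment $f^{-1}\left(0\right)\subseteq g^{-1}\left(0\right)$ in one step using the encoding trick of Lemma~\ref{lemma Om.->}: the tuple $\left(g\left(\mathbf{a}\right),\widehat{\mathbf{a}}\right)$ determines $\mathbf{a}$ up to the harmless $\mathbf{0}/\mathbf{1}$ collision, the abstractly defined outer function $h$ lands in $W^{\infty}$ because its zero set inherits a constant $0$ column from the $g$-coordinate, and the non-clone hypothesis enters through Lemma~\ref{lemma +imp} (to secure $\rightarrow\in\mathcal{K}$ and $\mathcal{K}\subseteq\Omega_{11}$). Both arguments hinge on the same key device of prepending $g$ to the inner functions so as to force the outer function into $W^{\infty}$; yours avoids the one-value-at-a-time reduction at the price of first having to rule out $+,\lnot+,\lnot\rightarrow$, while the paper's uses only subfunctions of $g$ itself. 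One typographical slip: your final display lists only the diagonal implications $x_{i}\rightarrow x_{i}$, which are constant $1$ and carry no information; it should list all $n^{2}$ implications $x_{i}\rightarrow x_{j}$, as the body of your construction makes clear.
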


\begin{proof}
For $\mathcal{K}=W^{k}$ the statement is clear, so let us suppose that
$\mathcal{K}\in\mathbf{I}\left(  W^{k}\right)  \setminus\left\{
W^{k}\right\}  $. Let $f\in\Omega,g\in\mathcal{K}$ be $n$-ary functions such
that $f^{-1}\left(  0\right)  \subseteq g^{-1}\left(  0\right)  $. We have to
show that $f\in\mathcal{K}$. We can suppose without loss of generality that
$\left\vert g^{-1}\left(  0\right)  \setminus f^{-1}\left(  0\right)
\right\vert =1$, i.e., $f$ and $g$ differ only at one position $\mathbf{a}%
\in\left\{  0,1\right\}  ^{n}$. (Otherwise we apply this several times
changing the values of $g$ one by one until we reach the desired function
$f$.) For notational simplicity let us also assume that $a_{1}=\cdots=a_{l}=0$
and $a_{l+1}=\cdots=a_{n}=1$. Then $g\left(  0,\ldots,0,1,\ldots,1\right)  =0$
and $f\left(  0,\ldots,0,1,\ldots,1\right)  =1$. (Here, and in the rest of the
proof all $n$-tuples are split into two parts of size $l$ and $n-l$.)

From Proposition~\ref{prop CK} we know that $\mathcal{K}=\left[
\mathcal{K}\right]  \circ\mathcal{K}=W^{k}\circ\mathcal{K}\supseteq W^{\infty
}\circ\mathcal{K}$, so it suffices to show that $f\in W^{\infty}%
\circ\mathcal{K}$. We consider the following function of arity $N+1$ with
$N=l\left(  n-l\right)  $, which clearly belongs to $W^{\infty}$:%
\[
h\left(  z_{1},\ldots,z_{N},w\right)  =\Bigl(\,%
{\displaystyle\bigvee\limits_{1\leq k\leq N}}
z_{k}\Bigr)\rightarrow w\text{.}%
\]
Let us construct the following function $f^{\prime}\in W^{\infty}%
\circ\mathcal{K}$, which is a composition of $h$ (as outer function) and
several subfunctions of $g$ (as inner functions):%
\begin{multline*}
f^{\prime}\left(  x_{1},\ldots,x_{l},y_{1},\ldots,y_{n-l}\right) \\
=\Bigl(%
{\displaystyle\bigvee\limits_{\substack{1\leq i\leq l\\1\leq j\leq n-l}}}
g\left(  x_{i},\ldots,x_{i},y_{j},\ldots,y_{j}\right)  \Bigr)\rightarrow
g\left(  x_{1},\ldots,x_{l},y_{1},\ldots,y_{n-l}\right)  \text{.}%
\end{multline*}

We claim that $f^{\prime}=f$. In order to verify this fact, let us observe
that since $g\left(  0,\ldots,0,1,\ldots,1\right)  =0$, we have $g\left(
1,\ldots,1,0,\ldots,0\right)  =1$, as $g\in\mathcal{K}\subseteq W^{2}$. We
also have $g\left(  1,\ldots,1,1,\ldots,1\right)  =1$, and then we must have
$g\left(  0,\ldots,0,0,\ldots,0\right)  =1$, since otherwise
$\operatorname{id}$ would be a subfunction of $g$, implying $\mathcal{K}%
=\left[  \mathcal{K}\right]  =W^{k}$, contrary to our assumption. Therefore
$g\left(  x_{i},\ldots,x_{i},y_{j},\ldots,y_{j}\right)  =0$ iff $x_{i}=0$ and
$y_{j}=1$, hence the disjunction in the definition of $f^{\prime}$ equals $0$
iff $x_{1}=\cdots=x_{l}=0$ and $y_{1}=\cdots=y_{n-l}=1$. Thus $f^{\prime}$
differs from $g$ only at the position $\mathbf{a}$, where $f^{\prime}\left(
\mathbf{a}\right)  =1$ and $g\left(  \mathbf{a}\right)  =0$. Hence
$f=f^{\prime}\in W^{\infty}\circ\mathcal{K}\subseteq\mathcal{K}$, as claimed.
\end{proof}

\begin{proposition}
\label{prop W_infty}Let $\mathcal{K}$ be an equational class such that
$\left[  \mathcal{K}\right]  =W^{\infty}$. Then%
\[
\mathcal{K}\in\mathbf{I}\left(  W^{\infty}\right)  \iff Z_{\infty}%
\mathcal{K}=\mathcal{K}.
\]

\end{proposition}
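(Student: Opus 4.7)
The plan is to prove the two implications separately. The forward direction is immediate: applying Lemma~\ref{lemma ize} with $k=\infty$ gives that any $\mathcal{K}\in\mathbf{I}(W^{\infty})$ satisfies $Z_{\infty}\mathcal{K}=\mathcal{K}$.

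For the converse, I would suppose $Z_{\infty}\mathcal{K}=\mathcal{K}$. Since $[\mathcal{K}]=W^{\infty}$ is part of the hypothesis, it remains only to verify that $\mathcal{K}$ is closed under composition. So fix $f\in\mathcal{K}$ of arity $n$ and $g_{1},\ldots,g_{n}\in\mathcal{K}$ of arity $k$, and consider $h=f(g_{1},\ldots,g_{n})$. To show $h\in\mathcal{K}=Z_{\infty}\mathcal{K}$, the goal is to exhibit a $k$-ary function in $\mathcal{K}$ whose zero set contains $h^{-1}(0)$.

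The key observation is that $f\in\mathcal{K}\subseteq W^{\infty}$, so by the characterization of $W^{\infty}$ recalled just before the definition of $Z_{k}$, the matrix $f^{-1}(0)$ has a constant $0$ column; let $j$ index such a column. Then $f(y_{1},\ldots,y_{n})=0$ forces $y_{j}=0$. Specializing to $y_{i}=g_{i}(\mathbf{a})$, every $\mathbf{a}\in h^{-1}(0)$ satisfies $g_{j}(\mathbf{a})=0$, so $h^{-1}(0)\subseteq g_{j}^{-1}(0)$. Since $g_{j}\in\mathcal{K}$ has arity $k$, the definition of $Z_{\infty}$ yields $h\in Z_{\infty}\mathcal{K}=\mathcal{K}$, which establishes closure under composition and hence $\mathcal{K}\in\mathbf{I}(W^{\infty})$.

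There is essentially no obstacle to overcome: the crucial point is recognizing that the defining property of $W^{\infty}$ hands us a witness for the $Z_{\infty}$-membership of $h$ for free, in the form of one of the inner functions $g_{j}$ itself. As a byproduct, the same argument actually proves the slightly stronger statement $W^{\infty}\circ\mathcal{K}\subseteq\mathcal{K}$, which is consistent with what Proposition~\ref{prop CK} predicts once composition-closure has been established.
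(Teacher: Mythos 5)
Your proof is correct and follows essentially the same route as the paper: the forward direction cites Lemma~\ref{lemma ize} with $k=\infty$, and the converse uses the constant-$0$ column of $f^{-1}(0)$ to exhibit an inner function $g_j$ with $h^{-1}(0)\subseteq g_j^{-1}(0)$, exactly as in the paper's argument. Your closing remark that the argument in fact gives $W^{\infty}\circ\mathcal{K}\subseteq\mathcal{K}$ is a correct observation, since only $f\in W^{\infty}$ is used.
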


\begin{proof}
\textquotedblleft$\implies$\textquotedblright: Follows from the previous lemma
with $k=\infty$.

\textquotedblleft$\impliedby$\textquotedblright: Let us suppose that $\left[
\mathcal{K}\right]  =W^{\infty}$ and $Z_{\infty}\mathcal{K}=\mathcal{K}$. Let
$f\in\mathcal{K}$ be $n$-ary and $g_{1},\ldots,g_{n}\in\mathcal{K}$ be $k$-ary
functions. We have to prove that $h=f\left(  g_{1},\ldots,g_{n}\right)  $
belongs to $\mathcal{K}$. Since $f\in W^{\infty}$, there exists an index $i$
such that for all $\left(  a_{1},\ldots,a_{n}\right)  \in f^{-1}\left(
0\right)  $ we have $a_{i}=0$. Therefore, for all $\left(  b_{1},\ldots
,b_{k}\right)  \in h^{-1}\left(  0\right)  $ we have $g_{i}\left(
b_{1},\ldots,b_{k}\right)  =0$, i.e., $h^{-1}\left(  0\right)  \subseteq
g_{i}^{-1}\left(  0\right)  $. This proves that $h\in Z_{\infty}%
\mathcal{K}=\mathcal{K}$.
\end{proof}

\begin{remark}
\label{rem ideal}Let us note that $Z_{\infty}\mathcal{K}=\mathcal{K}$ iff
$\mathcal{K}$ is a filter in the usual pointwise ordering $\leq$ of Boolean
functions. Let $\sqsubseteq$ be the transitive closure of $\preceq\cup\geq$.
Then $\sqsubseteq$ is a quasiorder on $\Omega$, and a class $\mathcal{K}$ is
an ideal with respect to this quasiorder iff $\mathcal{K}$ is an ideal w.r.t.
$\preceq$ (i.e., an equational class) and a filter w.r.t. $\leq$ (i.e.,
satisfies $Z_{\infty}\mathcal{K}=\mathcal{K}$). Thus we can reformulate the
above proposition as follows: $\mathcal{K}\in\mathbf{I}\left(  W^{\infty
}\right)  $ iff $\left[  \mathcal{K}\right]  =W^{\infty}$ and $\mathcal{K}$ is
an ideal w.r.t. $\sqsubseteq$. This implies the somewhat surprising fact that
a union of idempotents is idempotent in this case, i.e., the lattice
operations in the interval $\mathbf{I}\left(  W^{\infty}\right)  $ coincide
with the set operations $\cap$ and $\cup$. Consequently, $\mathbf{I}\left(
W^{\infty}\right)  $ is a distributive lattice.
\end{remark}

In the next proposition we describe $\bigcap\mathbf{I}\left(  W^{\infty
}\right)  $, the bottom element of the interval $\mathbf{I}\left(  W^{\infty
}\right)  $.

\begin{proposition}
\label{prop bottom W_infty}The bottom element of the interval $\mathbf{I}%
\left(  W^{\infty}\right)  $ is $\left\lfloor \rightarrow\right\rfloor $,
which is strongly defined by the relational constraints%
\[
\Bigl(\left\{  0,1\right\}  ^{k}\setminus\left\{  \mathbf{1}\right\}
,\left\{  0,1\right\}  ^{k}\setminus\left\{  \mathbf{0}\right\}
\Bigr)\quad\left(  k\in\mathbb{N}\right)  .
\]

\end{proposition}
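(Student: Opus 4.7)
The proposition contains two assertions: $\lfloor\rightarrow\rfloor$ is the minimum element of $\mathbf{I}(W^{\infty})$, and this minimum is strongly defined by the given family of relational constraints. I would handle them in that order.

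For the minimality, take an arbitrary $\mathcal{K}\in\mathbf{I}(W^{\infty})$ and aim to show $\rightarrow\in\mathcal{K}$. If $\mathcal{K}=W^{\infty}$ there is nothing to prove; otherwise Theorem~\ref{thm interval} gives $\mathcal{K}\subseteq W^{\infty}_{=}$, and combining this with $W^{\infty}\subseteq\Omega_{\ast1}$ yields $\mathcal{K}\subseteq W^{\infty}\cap\Omega_{11}$. Since $[\mathcal{K}]=W^{\infty}$ is not essentially unary, Lemma~\ref{lemma +imp} forces at least one of $+,\lnot+,\rightarrow,\lnot\rightarrow$ to lie in $\mathcal{K}$. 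A quick inspection rules out three of them: $+$ and $\lnot\rightarrow$ belong to $\Omega_{00}$, while $\lnot+\notin W^{2}$ because $(\lnot+)^{-1}(0)=\{(0,1),(1,0)\}$ has no constant-$0$ column. Hence $\rightarrow\in\mathcal{K}$, so $\lfloor\rightarrow\rfloor\subseteq\mathcal{K}$.

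For the relational description, let $\mathcal{M}$ denote the class strongly defined by the given constraints. In the zero-set language of this section, preservation of $Q_{k}$ is precisely membership in $W^{k}$, while satisfaction of $(P_{k},Q_{k})$ amounts to the statement that every at most $k$-element subset of $f^{-1}(0)$ has a constant-$1$ column. Collecting over all $k\in\mathbb{N}$ and using that $f^{-1}(0)$ is finite, $f\in\mathcal{M}$ iff $f^{-1}(0)=\emptyset$ or $f^{-1}(0)$ has both a constant-$0$ and a constant-$1$ column. In particular $\rightarrow\in\mathcal{M}$, and since $\mathcal{M}$ is a composition-closed equational class by Theorem~\ref{thm galois connection}, the inclusion $\lfloor\rightarrow\rfloor\subseteq\mathcal{M}$ is immediate.

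The reverse inclusion is where the real work happens. Given $f\in\mathcal{M}$ of arity $n$ (the case $f\equiv1$ being covered by $f=x\rightarrow x$), pick indices $i,j$ witnessing a constant-$1$ and a constant-$0$ column of $f^{-1}(0)$, so $i\neq j$. Using $\lfloor\rightarrow\rfloor=W^{\infty}\circ\{\rightarrow\}$ from Proposition~\ref{prop gen vs lgen}, I would exhibit inner functions $u_{pq}(\mathbf{x})=x_{p}\rightarrow x_{q}$ for $1\leq p,q\leq n$ and an outer function $\tilde{f}\in W^{\infty}$ of arity $n^{2}$ such that $f(\mathbf{x})=\tilde{f}(u_{11}(\mathbf{x}),\ldots,u_{nn}(\mathbf{x}))$. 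Writing $\hat{\mathbf{a}}=(u_{pq}(\mathbf{a}))_{p,q=1}^{n}$, the natural prescription is $\tilde{f}(\hat{\mathbf{a}})=f(\mathbf{a})$ on the image of $\mathbf{a}\mapsto\hat{\mathbf{a}}$ and $\tilde{f}\equiv1$ elsewhere. Three things then remain to verify: (a)~$\tilde{f}$ is well-defined, which reduces to showing that $\hat{\mathbf{a}}$ recovers $\mathbf{a}$ except when $\mathbf{a}\in\{\mathbf{0},\mathbf{1}\}$, combined with the observation that $f(\mathbf{0})=f(\mathbf{1})=1$ (since neither point can belong to $f^{-1}(0)$ without contradicting the existence of a constant-$1$ or constant-$0$ column); (b)~$\tilde{f}\in W^{\infty}$, because for every $\mathbf{a}\in f^{-1}(0)$ the $(i,j)$-entry of $\hat{\mathbf{a}}$ equals $a_{i}\rightarrow a_{j}=1\rightarrow0=0$, so column $(i,j)$ of $\tilde{f}^{-1}(0)$ is constant $0$; (c)~the composition reproduces $f$, immediate from the definition. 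The main obstacle is (a), packaging the argument that $\mathbf{a}$ is determined by $\hat{\mathbf{a}}$ modulo the $\{\mathbf{0},\mathbf{1}\}$ ambiguity; this is a variant of the injectivity trick used in the proofs of Lemmas~\ref{lemma Om.->} and~\ref{lemma Om.+}.
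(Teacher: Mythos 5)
Your proof is correct, but it reaches the conclusion by a genuinely different route than the paper. The paper establishes a single cycle of inclusions $\bigcap\mathbf{I}(W^{\infty})\subseteq\lfloor\rightarrow\rfloor\subseteq\mathcal{B}^{\infty}\subseteq\bigcap\mathbf{I}(W^{\infty})$, where the only nontrivial step is the last one: given $f\in\mathcal{B}^{\infty}$, it takes a nonmonotone member of the bottom element, and uses $Z_{\infty}$-closure (Lemma~\ref{lemma ize}) together with identification of variables and cylindrification to manufacture a $g_{4}$ with $g_{4}^{-1}(0)=\{1\}\times\{0\}\times\{0,1\}^{n-2}\supseteq f^{-1}(0)$, whence $f$ lies in the bottom element. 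This proves minimality and the relational description simultaneously and never needs to exhibit $\rightarrow$ inside an arbitrary $\mathcal{K}\in\mathbf{I}(W^{\infty})$. You instead split the statement: you prove minimality directly by combining Lemma~\ref{lemma +imp} with $\mathcal{K}\subseteq W^{\infty}\cap\Omega_{11}$ to force $\rightarrow\in\mathcal{K}$ (the eliminations of $+$, $\lnot\rightarrow$, $\lnot+$ are all sound), and you prove $\mathcal{B}^{\infty}\subseteq\lfloor\rightarrow\rfloor$ by an explicit composition $f=\tilde{f}(u_{11},\ldots,u_{nn})$ with $\tilde{f}\in W^{\infty}$, adapting the injectivity argument of Lemma~\ref{lemma Om.->}. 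All three verifications (a)--(c) go through: the collision $\hat{\mathbf{0}}=\hat{\mathbf{1}}$ is harmless because the two constant columns of $f^{-1}(0)$ force $f(\mathbf{0})=f(\mathbf{1})=1$, and the $(i,j)$-column argument does put $\tilde{f}$ in $W^{\infty}$. What your route buys is a concrete normal form for the members of $\lfloor\rightarrow\rfloor$ (outer function in $W^{\infty}$ applied to all pairwise implications) and an explicit unfolding of what the constraints say ($f^{-1}(0)$ empty or possessing both a constant-$0$ and a constant-$1$ column), which the paper leaves implicit; what it costs is that the construction of $\tilde{f}$ is longer and more delicate than the paper's $Z_{\infty}$-based absorption argument, which also recycles directly in the finite case (Proposition~\ref{prop bottom W_k}).
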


\begin{proof}
Let $\mathcal{B}^{\infty}$ denote the function class strongly defined by the
above constraints. We will prove that%
\[
\bigcap\mathbf{I}\left(  W^{\infty}\right)  \subseteq\left\lfloor
\rightarrow\right\rfloor \subseteq\mathcal{B}^{\infty}\subseteq\bigcap
\mathbf{I}\left(  W^{\infty}\right)  .
\]

The first containment follows immediately from the fact that $\left\lfloor
\rightarrow\right\rfloor \in\mathbf{I}\left(  W^{\infty}\right)  $, and for
the second one it suffices to verify that $\rightarrow\in\mathcal{B}^{\infty}%
$, since $\mathcal{B}^{\infty}$ is an idempotent according to
Theorem~\ref{thm galois connection}.

For the third containment, let $f$ be an arbitrary $n$-ary function in
$\mathcal{B}^{\infty}$, and let $k=\left\vert f^{-1}\left(  0\right)
\right\vert $. Since $f$ preserves the relation $\left\{  0,1\right\}
^{k}\setminus\left\{  \mathbf{0}\right\}  $, the matrix $f^{-1}\left(
0\right)  $ has a constant $0$ column. Moreover, since $f$ satisfies the
relational constraint $(\left\{  0,1\right\}  ^{k}\setminus\left\{
\mathbf{1}\right\}  ,\left\{  0,1\right\}  ^{k}\setminus\left\{
\mathbf{0}\right\}  )$, the matrix $f^{-1}\left(  0\right)  $ has a constant
$1$ column as well. We can suppose without loss of generality that the first
column of $f^{-1}\left(  0\right)  $ is constant $1$ and the second column is
constant $0$.

Since $\left[  \bigcap\mathbf{I}\left(  W^{\infty}\right)  \right]
=W^{\infty}$, there is at least one nonmonotone function $g_{1}\in
\bigcap\mathbf{I}\left(  W^{\infty}\right)  $. For such a function,
$g_{1}^{-1}\left(  0\right)  \nsubseteq\left\{  \mathbf{0},\mathbf{1}\right\}
$, thus, permuting variables, we may suppose that $g_{1}^{-1}\left(  0\right)
$ contains a vector of the form $\left(  1,\ldots,1,0,\ldots,0\right)  $.
Applying the operator $Z_{\infty}$, we can obtain a function $g_{2}\in
Z_{\infty}\bigcap\mathbf{I}\left(  W^{\infty}\right)  =\bigcap\mathbf{I}%
\left(  W^{\infty}\right)  $ of the same arity as $g_{1}$ with $g_{2}%
^{-1}\left(  0\right)  =\left\{  \left(  1,\ldots,1,0,\ldots,0\right)
\right\}  $. Identifying appropriately the variables of $g_{2}$, we get a
binary function $g_{3}\in\bigcap\mathbf{I}\left(  W^{\infty}\right)  $ with
$g_{3}^{-1}\left(  0\right)  =\left\{  \left(  1,0\right)  \right\}  $
($g_{3}$ is nothing else but the implication). Adding $n-2$ dummy variables,
we can obtain a function $g_{4}\in\bigcap\mathbf{I}\left(  W^{\infty}\right)
$ of arity $n$, such that $g_{4}^{-1}\left(  0\right)  =\left\{  1\right\}
\times\left\{  0\right\}  \times\left\{  0,1\right\}  ^{n-2}$, i.e.,
\emph{every} $n$-tuple of the form $\left(  1,0,\ldots\right)  $ belongs to
$g_{4}^{-1}\left(  0\right)  $. Since $f^{-1}\left(  0\right)  $ consists of
\emph{some} of these tuples, we have $f\in Z_{\infty}\left\{  g_{4}\right\}
\subseteq Z_{\infty}\bigcap\mathbf{I}\left(  W^{\infty}\right)  =\bigcap
\mathbf{I}\left(  W^{\infty}\right)  $.
\end{proof}

Combining the previous two propositions and Remark~\ref{rem ideal}, we get the
following characterizations of the idempotents corresponding to $W^{\infty}$.

\begin{theorem}
\label{thm W_infty}For any class $\mathcal{K}$ of Boolean functions the
following conditions are equivalent:

\begin{enumerate}
\item $\mathcal{K}\in\mathbf{I}\left(  W^{\infty}\right)  $;

\item $\rightarrow\in\mathcal{K}\subseteq W^{\infty}$, and $\mathcal{K}$ is an
equational class satisfying $Z_{\infty}\mathcal{K}=\mathcal{K}$;

\item $\rightarrow\in\mathcal{K}\subseteq W^{\infty}$, and $\mathcal{K}$ is an
ideal with respect to the quasiorder $\sqsubseteq$.
\end{enumerate}
\end{theorem}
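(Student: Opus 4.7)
The plan is to derive this three-way equivalence as a clean packaging of the preceding results, with no substantial new work required. I would establish $(1)\Leftrightarrow(2)$ by combining Proposition~\ref{prop W_infty} with Proposition~\ref{prop bottom W_infty}, and then observe that $(2)\Leftrightarrow(3)$ is exactly the order-theoretic reformulation provided by Remark~\ref{rem ideal}.

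For the direction $(1)\Rightarrow(2)$: if $\mathcal{K}\in\mathbf{I}(W^{\infty})$, then by definition $\mathcal{K}$ is a composition-closed equational class with $[\mathcal{K}]=W^{\infty}$, so in particular $\mathcal{K}\subseteq W^{\infty}$. Proposition~\ref{prop bottom W_infty} identifies $\lfloor\rightarrow\rfloor$ as the bottom element of $\mathbf{I}(W^{\infty})$, hence $\lfloor\rightarrow\rfloor\subseteq\mathcal{K}$, which gives $\rightarrow\in\mathcal{K}$. The equality $Z_{\infty}\mathcal{K}=\mathcal{K}$ is then immediate from Proposition~\ref{prop W_infty} (or directly from Lemma~\ref{lemma ize} with $k=\infty$). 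For $(2)\Rightarrow(1)$, assume the conditions of (2). Since $\rightarrow\in\mathcal{K}$ and $\mathcal{K}\subseteq W^{\infty}$, we have $W^{\infty}=[\rightarrow]\subseteq[\mathcal{K}]\subseteq[W^{\infty}]=W^{\infty}$, so $[\mathcal{K}]=W^{\infty}$. Proposition~\ref{prop W_infty} then delivers $\mathcal{K}\in\mathbf{I}(W^{\infty})$.

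Finally, $(2)\Leftrightarrow(3)$ is precisely the content of Remark~\ref{rem ideal}. The statement ``$\mathcal{K}$ is an equational class'' is by definition the condition that $\mathcal{K}$ is an ideal with respect to $\preceq$; the condition $Z_{\infty}\mathcal{K}=\mathcal{K}$ is, by the remark, the condition that $\mathcal{K}$ is a filter with respect to the pointwise order $\leq$. Since $\sqsubseteq$ is the transitive closure of $\preceq\cup\geq$, being an ideal with respect to $\sqsubseteq$ is equivalent to the conjunction of these two properties, and the condition $\rightarrow\in\mathcal{K}\subseteq W^{\infty}$ is the same in (2) and (3).

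There is no real obstacle in the argument; the three propositions and the remark above do all the work, and the theorem merely consolidates them. The only point worth double-checking is the implication $\rightarrow\in\mathcal{K}\subseteq W^{\infty}\implies[\mathcal{K}]=W^{\infty}$, which is justified by the well-known fact that $\rightarrow$ generates $W^{\infty}$ as a clone (noted in Subsection~\ref{subsect Post lattice}).
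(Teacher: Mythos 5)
Your proof is correct and matches the paper exactly: the paper presents this theorem as an immediate consequence of Proposition~\ref{prop W_infty}, Proposition~\ref{prop bottom W_infty} and Remark~\ref{rem ideal}, with no further argument, and your write-up supplies precisely the routine details of that combination (including the key observation that $\rightarrow\in\mathcal{K}\subseteq W^{\infty}$ forces $\left[\mathcal{K}\right]=W^{\infty}$ because $\rightarrow$ generates $W^{\infty}$).
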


We conclude this section by proving that the interval $\mathbf{I}\left(
W^{\infty}\right)  $ is uncountable.

\begin{theorem}
The interval $\mathbf{I}\left(  W^{\infty}\right)  $ has continuum cardinality.
\end{theorem}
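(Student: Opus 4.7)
The plan is to establish $|\mathbf{I}(W^{\infty})| = 2^{\aleph_{0}}$ by producing an infinite $\sqsubseteq$-antichain inside $W^{\infty} \setminus \lfloor \rightarrow \rfloor$ and then turning its subsets into distinct elements of $\mathbf{I}(W^{\infty})$ via Theorem~\ref{thm W_infty} and Remark~\ref{rem ideal}. The upper bound $|\mathbf{I}(W^{\infty})| \leq 2^{\aleph_{0}}$ is immediate, as there are only countably many Boolean functions up to $\equiv$, so at most $2^{\aleph_{0}}$ equational classes in total.

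For the lower bound, for each $n \geq 2$ I would take $a_{n}$ to be the $(n+1)$-ary Boolean function with
\[
a_{n}^{-1}(0) = \{\,(0, \mathbf{e}_{1}),\; (0, \mathbf{e}_{2}),\; \ldots,\; (0, \mathbf{e}_{n}),\; (0, \mathbf{1})\,\},
\]
where $\mathbf{e}_{i}$ is the $i$-th standard basis vector in $\{0,1\}^{n}$. The first column of this zero matrix is constantly $0$, while every other column contains at least one $0$; hence $a_{n} \in W^{\infty}$ but, by Proposition~\ref{prop bottom W_infty}, $a_{n} \notin \lfloor \rightarrow \rfloor$.

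The main task is to verify that $\{a_{n}\}_{n \geq 2}$ is a $\sqsubseteq$-antichain. Unpacking the definition, $a_{p} \sqsubseteq a_{q}$ holds iff there exists $\sigma \colon [q+1] \to [p+1]$ such that for every $\mathbf{x} \in a_{p}^{-1}(0)$ the tuple $(x_{\sigma(1)}, \ldots, x_{\sigma(q+1)})$ lies in $a_{q}^{-1}(0)$. The $1$-set of the projection of the row $(0,\mathbf{1})$ equals $[q+1] \setminus \sigma^{-1}(1)$, and it must match one of the $1$-sets $\{2\}, \ldots, \{q+1\}, \{2, \ldots, q+1\}$ realized by the rows of $a_{q}$. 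For $p \geq 2$, the singleton option forces $p-1$ of the rows $(0, \mathbf{e}_{i})$ to project to the invalid all-zero tuple, so we must have $\sigma^{-1}(1) = \{1\}$. Then the sets $\sigma^{-1}(i+1)$ for $i = 1, \ldots, p$ are nonempty disjoint pieces partitioning $\{2, \ldots, q+1\}$, and each must again be a singleton or the full set. The ``full set'' option wipes out the other pieces, so all are singletons, and $p = q$ follows. Hence $a_{p}$ and $a_{q}$ are $\sqsubseteq$-incomparable whenever $p \neq q$.

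Granted the antichain, the rest is mechanical. For each $S \subseteq \{2, 3, \ldots\}$ set
\[
\mathcal{K}_{S} = \lfloor \rightarrow \rfloor \;\cup\; \bigcup_{n \in S} \{\, g \in \Omega : g \sqsubseteq a_{n} \,\}.
\]
This is a union of $\sqsubseteq$-ideals and thus itself a $\sqsubseteq$-ideal; it contains $\rightarrow$; and it lies inside $W^{\infty}$ because both $\preceq$-down and $\leq$-up preserve the existence of a constant $0$ column in the zero set. By Theorem~\ref{thm W_infty}, $\mathcal{K}_{S} \in \mathbf{I}(W^{\infty})$. The antichain property together with $a_{n} \notin \lfloor \rightarrow \rfloor$ yields $a_{n} \in \mathcal{K}_{S} \iff n \in S$, so distinct $S$ yield distinct $\mathcal{K}_{S}$, producing $2^{\aleph_{0}}$ elements of $\mathbf{I}(W^{\infty})$. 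I expect the only real obstacle to be the antichain verification, where the bookkeeping of which $\sigma^{-1}$-preimages can match which $1$-sets of $a_{q}$ requires some care.
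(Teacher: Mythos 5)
Your proof is correct, but it takes a genuinely different route from the paper. The paper works with ``cycle'' functions $f_n$ whose zero sets are the rows of the matrices $J_n$, and separates the classes $\left\lfloor \left\{ f_i : i\in I\right\}\right\rfloor$ by exhibiting, for each odd $m$, a relational constraint $\left( P_m, \left\{0,1\right\}^m\setminus\left\{\mathbf{0}\right\}\right)$ that $f_n$ strongly satisfies iff $m\neq n$; the separation is proved by a graph-theoretic parity argument on incidence matrices. You instead exploit the order-theoretic characterization of Theorem~\ref{thm W_infty}(3) directly: you build an explicit $\sqsubseteq$-antichain $\left\{a_n\right\}$ outside $\left\lfloor\rightarrow\right\rfloor$ and take, for each $S$, the union of $\left\lfloor\rightarrow\right\rfloor$ with the principal $\sqsubseteq$-ideals of the $a_n$, $n\in S$. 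This is more self-contained (no relational constraints, no graph argument, no need to compute generated idempotents) and your functions $a_n$ are simpler than the $f_n$; what the paper's approach buys is an explicit family of invariants (strongly satisfied constraints) distinguishing the classes. All the verifications go through: $W^{\infty}$ and $\left\lfloor\rightarrow\right\rfloor$ are $\sqsubseteq$-ideals, a union of $\sqsubseteq$-ideals is one, $a_n\notin\left\lfloor\rightarrow\right\rfloor$ since the zero-set matrix of $a_n$ has no constant $1$ column, and your case analysis on $\sigma^{-1}(1)$ correctly rules out $a_p\sqsubseteq a_q$ for $p\neq q\geq 2$. The one step you gloss over is the phrase ``unpacking the definition'': since $\sqsubseteq$ is defined as the \emph{transitive closure} of $\preceq\cup\geq$, the reduction of $a_p\sqsubseteq a_q$ to a single map $\sigma$ (i.e., to the two-step composite $a_p\geq b\preceq a_q$) requires the observation that $g\preceq h\geq f$ implies $g\geq f'\preceq f$ where $f'$ is the subfunction of $f$ obtained by the same variable substitution, so that the composite relation is already transitive. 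This is easy and true, but it is genuinely needed in the direction you use (ruling out arbitrary $\sqsubseteq$-chains from $a_p$ to $a_q$), so it should be stated.
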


\begin{proof}
Let $J_{n}$ be the following $n\times\left(  n+1\right)  $ matrix over
$\left\{  0,1\right\}  $:%
\[
J_{n}=%
\begin{pmatrix}
1 & 1 & 0 & 0 & \cdots & 0 & 0 & 0 & 0\\
0 & 1 & 1 & 0 & \cdots & 0 & 0 & 0 & 0\\
0 & 0 & 1 & 1 & \cdots & 0 & 0 & 0 & 0\\
0 & 0 & 0 & 1 & \cdots & 0 & 0 & 0 & 0\\
\vdots & \vdots & \vdots & \vdots & \ddots & \vdots & \vdots & \vdots &
\vdots\\
0 & 0 & 0 & 0 & \cdots & 1 & 1 & 0 & 0\\
0 & 0 & 0 & 0 & \cdots & 0 & 1 & 1 & 0\\
1 & 0 & 0 & 0 & \cdots & 0 & 0 & 1 & 0
\end{pmatrix}
,
\]
and let $f_{n}$ be the $\left(  n+1\right)  $-ary Boolean function such that
$f_{n}^{-1}\left(  0\right)  $ consists of the rows of $J_{n}$, and let
$P_{n}$ be the $n$-ary relation consisting of the columns of $J_{n}$. It is
straightforward to verify that $\left[  f_{n}\right]  =W^{\infty}$, hence
$\left\lfloor f_{n}\right\rfloor \in\mathbf{I}\left(  W^{\infty}\right)  $. We
claim that for all natural numbers $m,n$,%
\[
\text{if }m\text{ is odd, then }f_{n}\text{ strongly satisfies }\left(
P_{m},\left\{  0,1\right\}  ^{m}\setminus\left\{  \mathbf{0}\right\}  \right)
\text{ iff }m\neq n.
\]

It is clear that $f_{n}$ does not satisfy $\left(  P_{n},\left\{  0,1\right\}
^{n}\setminus\left\{  \mathbf{0}\right\}  \right)  $, since $f_{n}\left(
J_{n}\right)  =\mathbf{0}$. Now let us assume that $m$ is odd and $m\neq n$.
The matrix $J_{n}$ has a constant $0$ column, therefore $f_{n}$ preserves
$\left\{  0,1\right\}  ^{m}\setminus\left\{  \mathbf{0}\right\}  $. Suppose
for contradiction that $f_{n}$ does not satisfy $\left(  P_{m},\left\{
0,1\right\}  ^{m}\setminus\left\{  \mathbf{0}\right\}  \right)  $, i.e., there
exists a $P_{m}$-matrix $N$ of size $m\times\left(  n+1\right)  $ such that
$f\left(  N\right)  =\mathbf{0}$. This means that every column of $N$ is a
column of $J_{m}$, and every row of $N$ is a row of $J_{n}$.

Let us interpret the matrix $N$ as the incidence matrix of a graph $G$ (with
possibly multiple edges): the columns of $N$ correspond to the vertices
$v_{1},\ldots,v_{n+1}$ of $G$, and the rows of $N$ correspond to the $m$ edges
of $G$. Since each row of $N$ is a row of $J_{n}$, the vertex $v_{n+1}$ is
isolated, and every edge connects two consecutive vertices in the cyclical
ordering of $v_{1},\ldots,v_{n}$. Since every column of $N$ is a column of
$J_{m}$, every vertex has degree $0$ or $2$. These properties imply that $G$
is either a cycle on the vertices $v_{1},\ldots,v_{n}$ (together with the
isolated vertex $v_{n+1}$), or the components of $G$ are $2$-cycles (double
edges) and isolated vertices:

\medskip

\hspace{\stretch{0.1999}}%
{\includegraphics[
height=1.4236in,
width=1.4369in
]%
{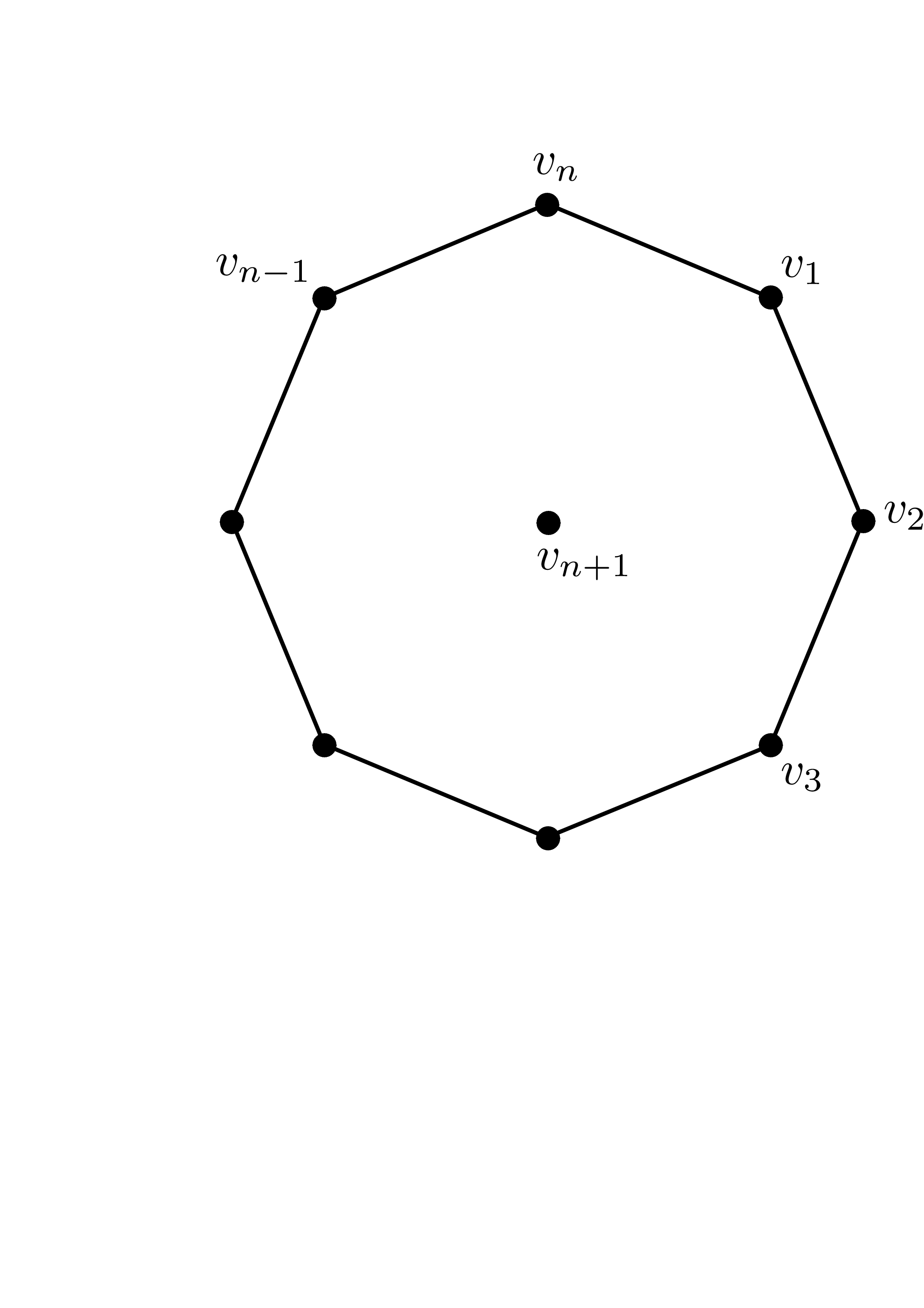}%
}
\hfill%
{\includegraphics[
height=1.4236in,
width=1.4369in
]%
{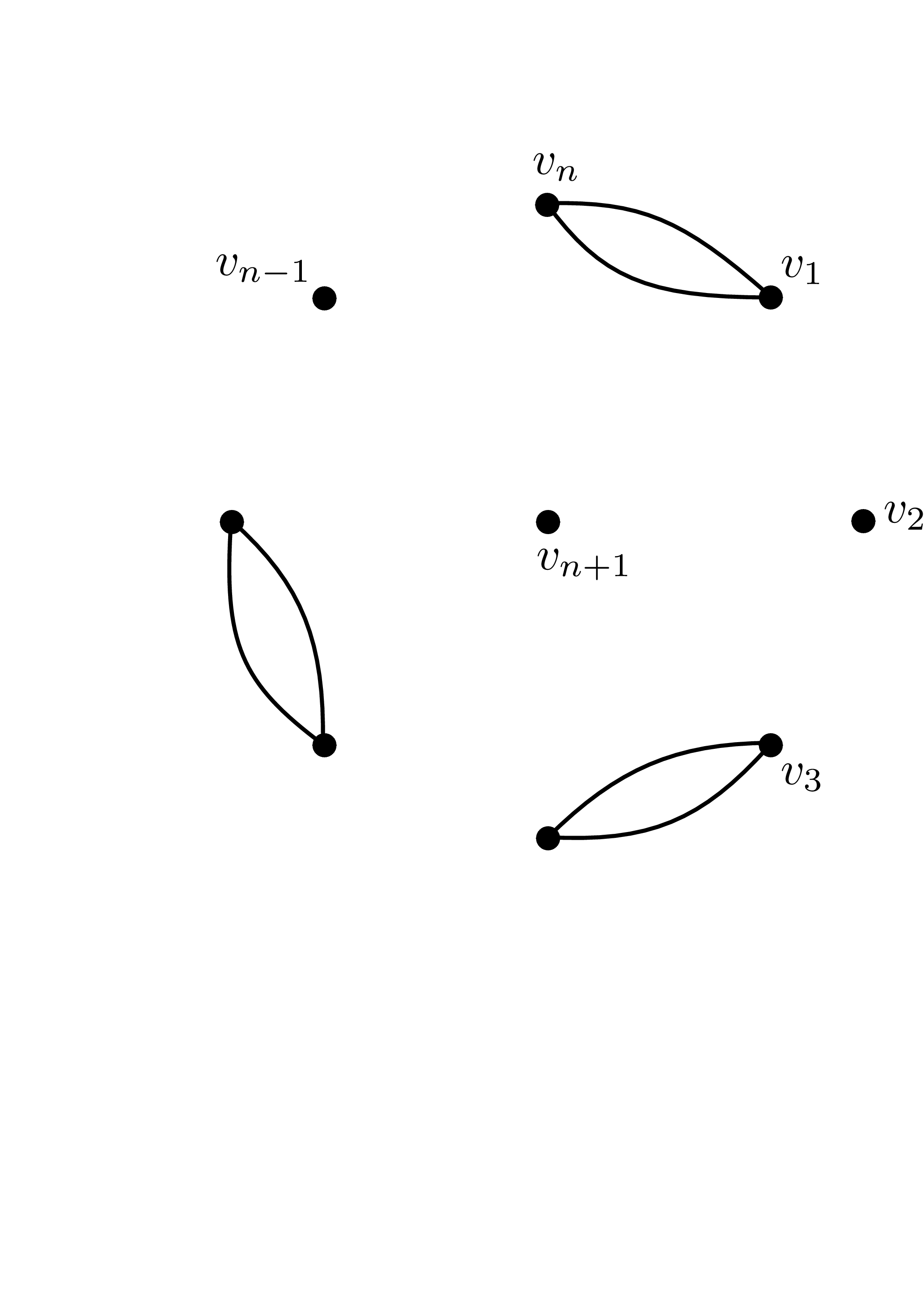}%
}
\hspace{\stretch{0.1999}}

\medskip

\noindent Both cases lead to a contradiction: in the former case we have
$m=n$, while in the latter case we can conclude that $m$ is even. This
contradiction proves our claim.

To finish the proof of the theorem it suffices to observe that our claim
implies that $I\mapsto\left\lfloor \left\{  f_{i}:i\in I\right\}
\right\rfloor $ embeds the power set of the set of odd natural numbers into
$\mathbf{I}\left(  W^{\infty}\right)  $.
\end{proof}

\begin{remark}
We have shown that the power set of a countably infinite set embeds into
$\mathbf{I}\left(  W^{\infty}\right)  $, and it is obvious that $\mathbf{I}%
\left(  W^{\infty}\right)  $ embeds into the power set of $\Omega$. Thus,
$\mathbf{I}\left(  W^{\infty}\right)  $ is equimorphic to the power set of a
countably infinite set.
\end{remark}

\pagebreak

\section{Idempotents corresponding to $W^{k}$\label{sect W_k}}

We will prove that the analogue of Proposition~\ref{prop W_infty} holds for
finite values of $k$ as well. First we need a technical lemma.

\begin{lemma}
For any $\mathcal{K}\subseteq\Omega$ and $l\geq2$ we have $Z_{l}%
\mathcal{K}\subseteq Z_{l+1}\left(  W^{l}\circ\mathcal{K}\right)  $.
\end{lemma}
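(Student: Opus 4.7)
The plan is to fix $f \in Z_{l}\mathcal{K}$ of arity $n$, pick an arbitrary subset $H' \subseteq f^{-1}(0)$ of size at most $l+1$, and produce a function in $W^{l} \circ \mathcal{K}$ of arity $n$ whose set of zeros contains $H'$. If $|H'| \leq l$, we are immediately done: the defining property of $Z_{l}\mathcal{K}$ supplies some $g \in \mathcal{K}$ of arity $n$ with $H' \subseteq g^{-1}(0)$, and since projections lie in every clone (in particular in $W^{l}$) we have $g = e_{1}^{(1)}(g) \in W^{l} \circ \mathcal{K}$. So the real content is the case $|H'| = l+1$.

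Write $H' = \{\mathbf{a}_{1}, \ldots, \mathbf{a}_{l+1}\}$. For each $i$, the set $H' \setminus \{\mathbf{a}_{i}\}$ has $l$ elements, so $Z_{l}\mathcal{K} \ni f$ gives us an $n$-ary $g_{i} \in \mathcal{K}$ with $H' \setminus \{\mathbf{a}_{i}\} \subseteq g_{i}^{-1}(0)$; thus $g_{i}(\mathbf{a}_{j}) = 0$ for every $j \neq i$, while $g_{i}(\mathbf{a}_{i}) \in \{0,1\}$ is unconstrained. View the $(l+1) \times (l+1)$ matrix $M$ with $M_{j,i} = g_{i}(\mathbf{a}_{j})$: all off-diagonal entries are $0$, so row $j$ of $M$ is either $\mathbf{0}$ or the standard unit vector $\mathbf{e}_{j} \in \{0,1\}^{l+1}$, according to the value $g_{j}(\mathbf{a}_{j})$.

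Now define an $(l+1)$-ary Boolean function $h$ by
\[
h^{-1}(0) = \{\mathbf{0}\} \cup \{\mathbf{e}_{j} : 1 \leq j \leq l+1,\ g_{j}(\mathbf{a}_{j}) = 1\}.
\]
By construction, $h(\text{row } j \text{ of } M) = 0$ for every $j$, so the composition $g' := h(g_{1}, \ldots, g_{l+1})$ satisfies $g'(\mathbf{a}_{j}) = 0$ for all $j$, i.e.\ $H' \subseteq g'^{-1}(0)$. So provided $h \in W^{l}$, we will have $g' \in W^{l} \circ \mathcal{K}$ and we are done.

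The step that requires a small verification is checking $h \in W^{l}$. Pick any subset $R \subseteq h^{-1}(0)$ with $|R| \leq l$; we must exhibit a coordinate that is $0$ throughout $R$. The unit vectors appearing in $R$ are indexed by a set $T \subseteq \{1, \ldots, l+1\}$ of size at most $l$, so $\{1, \ldots, l+1\} \setminus T$ is nonempty; picking any $j^{*}$ from it, the $j^{*}$-th coordinate vanishes on every $\mathbf{e}_{j}$ with $j \in T$ (since $j \neq j^{*}$) and on $\mathbf{0}$, hence is constantly $0$ on $R$. This is the only nontrivial point in the argument, and it is really just a pigeonhole count: we have $l+1$ columns but at most $l$ unit-vector rows, so some column index is missed.
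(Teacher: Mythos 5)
Your proof is correct and follows essentially the same route as the paper: choose $g_i\in\mathcal{K}$ vanishing on all of $H'$ except possibly $\mathbf{a}_i$, and compose with an $(l+1)$-ary outer function in $W^l$ whose zero set consists of tuples with at most one coordinate equal to $1$ (the paper uses this fixed $h$ for all cases, while you tailor $h^{-1}(0)$ to the diagonal values $g_j(\mathbf{a}_j)$ and treat $|H'|\leq l$ separately, but these are cosmetic differences). Your pigeonhole verification that $h\in W^l$ is exactly the point the paper leaves as "easy to see."
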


\begin{proof}
Let $f$ be an $n$-ary function in $Z_{l}\mathcal{K}$, and let $\mathbf{a}%
_{1},\ldots,\mathbf{a}_{l+1}\in f^{-1}\left(  0\right)  $. We need to show
that there exists an $n$-ary function $u\in W^{l}\circ\mathcal{K}$ such that
$\mathbf{a}_{1},\ldots,\mathbf{a}_{l+1}\in u^{-1}\left(  0\right)  $. Since
$f\in Z_{l}\mathcal{K}$, for every $j=1,\ldots,l+1$ we can find a function
$g_{j}\in\mathcal{K}$ such that $\mathbf{a}_{1},\ldots,\mathbf{a}%
_{j-1},\mathbf{a}_{j+1},\ldots,\mathbf{a}_{l+1}\in g_{j}^{-1}\left(  0\right)
$. Now let us consider the following function $h$ of arity $l+1$:%
\[
h\left(  x_{1},\ldots,x_{l+1}\right)  =\left\{  \!\!%
\begin{array}
[c]{ll}%
0, & \text{if }\left\vert \left\{  i:x_{i}=1\right\}  \right\vert \leq1;\\
1, & \text{otherwise.}%
\end{array}
\right.
\]
It is easy to see that $h\in W^{l}$, therefore $u=h\left(  g_{1}%
,\ldots,g_{l+1}\right)  \in W^{l}\circ\mathcal{K}$, and $\mathbf{a}_{1}%
,\ldots,\mathbf{a}_{l+1}\in u^{-1}\left(  0\right)  $. This proves that $f\in
Z_{l+1}\left(  W^{l}\circ\mathcal{K}\right)  $.
\end{proof}

\begin{proposition}
\label{prop W_k}Let $\mathcal{K}$ be an equational class such that $\left[
\mathcal{K}\right]  =W^{k}$. Then%
\[
\mathcal{K}\in\mathbf{I}\left(  W^{k}\right)  \iff Z_{k}\mathcal{K}%
=\mathcal{K}.
\]

\end{proposition}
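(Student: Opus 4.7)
The two directions have very different flavors: $(\Leftarrow)$ is a direct mimic of the corresponding direction in Proposition~\ref{prop W_infty}, while $(\Rightarrow)$ requires bootstrapping, using the preceding lemma to lift the information already extracted from Lemma~\ref{lemma ize}.

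For the direction $(\Leftarrow)$, assume $[\mathcal{K}]=W^{k}$ and $Z_{k}\mathcal{K}=\mathcal{K}$, and let us show that $\mathcal{K}$ is closed under composition. Take $f\in\mathcal{K}$ of arity $n$ and $g_{1},\ldots,g_{n}\in\mathcal{K}$ of arity $m$, and put $h=f(g_{1},\ldots,g_{n})$. For any subset $\{\mathbf{b}_{1},\ldots,\mathbf{b}_{k}\}\subseteq h^{-1}(0)$ of size at most $k$, each tuple $(g_{1}(\mathbf{b}_{j}),\ldots,g_{n}(\mathbf{b}_{j}))$ lies in $f^{-1}(0)$; since $f\in\mathcal{K}\subseteq W^{k}$, these $k$ tuples share a common zero coordinate $i$, whence $\{\mathbf{b}_{1},\ldots,\mathbf{b}_{k}\}\subseteq g_{i}^{-1}(0)$ with $g_{i}\in\mathcal{K}$. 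Therefore $h\in Z_{k}\mathcal{K}=\mathcal{K}$, as required.

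For the direction $(\Rightarrow)$, assume $\mathcal{K}\in\mathbf{I}(W^{k})$. Lemma~\ref{lemma ize} already gives $Z_{\infty}\mathcal{K}=\mathcal{K}$, so the task is to upgrade this to $Z_{k}\mathcal{K}=\mathcal{K}$, i.e.\ to show that the inclusion $Z_{\infty}\mathcal{K}\subseteq Z_{k}\mathcal{K}$ of (\ref{eq Z_2...Z_infty}) is in fact an equality. The key observation is that for every $l\geq k$ we have $W^{l}\subseteq W^{k}=[\mathcal{K}]$, so Proposition~\ref{prop CK} yields
\[
W^{l}\circ\mathcal{K}\subseteq\lbrack\mathcal{K}]\circ\mathcal{K}=\mathcal{K}.
\]
Feeding this into the preceding lemma, $Z_{l}\mathcal{K}\subseteq Z_{l+1}(W^{l}\circ\mathcal{K})\subseteq Z_{l+1}\mathcal{K}$; combined with the standard inclusion $Z_{l+1}\mathcal{K}\subseteq Z_{l}\mathcal{K}$ we obtain $Z_{l}\mathcal{K}=Z_{l+1}\mathcal{K}$ for every $l\geq k$. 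Hence the decreasing chain $(Z_{l}\mathcal{K})_{l\geq k}$ is constant, and intersecting gives
\[
Z_{k}\mathcal{K}=\bigcap_{l\geq k}Z_{l}\mathcal{K}=Z_{\infty}\mathcal{K}=\mathcal{K}.
\]

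The main conceptual hurdle is recognizing that the technical lemma is shaped precisely so that it can iteratively strengthen the $Z_{l}$-closure of $\mathcal{K}$ provided the error term $W^{l}\circ\mathcal{K}$ stays inside $\mathcal{K}$; this is exactly what Proposition~\ref{prop CK} guarantees as soon as $l\geq k$. Everything else is routine monotonicity. Note that the argument also explains why the threshold is $k$ and not smaller: for $l<k$ one would need $W^{l}\circ\mathcal{K}\subseteq\mathcal{K}$, which fails because $W^{l}$ is not contained in $[\mathcal{K}]=W^{k}$.
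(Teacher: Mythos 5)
Your proof is correct and follows essentially the same route as the paper: the $(\Leftarrow)$ direction is the same $Z_{k}$-closure argument adapted from Proposition~\ref{prop W_infty}, and the $(\Rightarrow)$ direction combines Lemma~\ref{lemma ize} with the technical lemma and Proposition~\ref{prop CK} to collapse the chain $Z_{k}\mathcal{K}\supseteq Z_{k+1}\mathcal{K}\supseteq\cdots$ down to $Z_{\infty}\mathcal{K}=\mathcal{K}$, exactly as in the paper.
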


\begin{proof}
\textquotedblleft$\implies$\textquotedblright: Let $\mathcal{K}$ be an
idempotent such that $\left[  \mathcal{K}\right]  =W^{k}$.
Lemma~\ref{lemma ize} shows that $Z_{\infty}\mathcal{K}=\mathcal{K}$. For any
$l\geq k$ we have $W^{l}\circ\mathcal{K}\subseteq W^{k}\circ\mathcal{K}%
=\mathcal{K}$ by Proposition~\ref{prop CK}, and using the previous lemma we
get $Z_{l}\mathcal{K}\subseteq Z_{l+1}\left(  W^{l}\circ\mathcal{K}\right)
\subseteq Z_{l+1}\mathcal{K}$. The reversed containment $Z_{l}\mathcal{K}%
\supseteq Z_{l+1}\mathcal{K}$ is obvious, so we have $Z_{k}\mathcal{K}%
=Z_{k+1}\mathcal{K}=Z_{k+2}\mathcal{K}=\cdots=Z_{\infty}\mathcal{K}%
=\mathcal{K}$ in light of (\ref{eq Z_2...Z_infty}).

\textquotedblleft$\impliedby$\textquotedblright: We just need to modify
slightly the second part of the proof of Proposition~\ref{prop W_infty}. Let
us suppose that $\mathcal{K}$ is an equational class such that $\left[
\mathcal{K}\right]  =W^{k}$ and $\mathcal{K}$ is closed under the operator
$Z_{k}$. Let $f\in\mathcal{K}$ be $n$-ary, and $g_{1},\ldots,g_{n}%
\in\mathcal{K}$ be $m$-ary functions. We have to prove that $h=f\left(
g_{1},\ldots,g_{n}\right)  $ belongs to $\mathcal{K}=Z_{k}\mathcal{K}$. If
$\mathbf{a}_{1},\ldots,\mathbf{a}_{k}\in h^{-1}\left(  0\right)  $, then the
vectors $\left(  g_{1}\left(  \mathbf{a}_{j}\right)  ,\ldots,g_{n}\left(
\mathbf{a}_{j}\right)  \right)  $ belong to $f^{-1}\left(  0\right)  $ for
$j=1,\ldots,k$. Since $f\in W^{k}$, there exists an index $i$ such that
$g_{i}\left(  \mathbf{a}_{j}\right)  =0$ for $j=1,\ldots,k$, i.e.,
$\mathbf{a}_{1},\ldots,\mathbf{a}_{k}\in g_{i}^{-1}\left(  0\right)  $. This
shows that $h\in Z_{k}\mathcal{K}=\mathcal{K}$.
\end{proof}

Our next task is, just like in the previous section, to describe
$\bigcap\mathbf{I}\left(  W^{k}\right)  $, the bottom element of the interval
$\mathbf{I}\left(  W^{k}\right)  $. The proof is very similar to the proof of
Proposition~\ref{prop bottom W_infty}.

\begin{proposition}
\label{prop bottom W_k}The bottom element of the interval $\mathbf{I}\left(
W^{k}\right)  $ is $\left\lfloor w_{k}\right\rfloor $, which is strongly
defined by the relational constraint%
\[
\Bigl(\left\{  0,1\right\}  ^{k}\setminus\left\{  \mathbf{1}\right\}
,\left\{  0,1\right\}  ^{k}\setminus\left\{  \mathbf{0}\right\}  \Bigr).
\]

\end{proposition}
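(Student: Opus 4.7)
Let $\mathcal{B}^k$ denote the class strongly defined by the given constraint. I shall follow the template of Proposition~\ref{prop bottom W_infty} and prove the triple inclusion
\[
\bigcap\mathbf{I}(W^k)\subseteq\lfloor w_k\rfloor\subseteq\mathcal{B}^k\subseteq\bigcap\mathbf{I}(W^k).
\]

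The first inclusion is immediate from the fact recorded in Subsection~\ref{subsect Post lattice} that $[w_k]=W^k$, which places $\lfloor w_k\rfloor$ in $\mathbf{I}(W^k)$. For the second, Theorem~\ref{thm galois connection} reduces matters to verifying that $w_k$ strongly satisfies the constraint. Writing out $w_k^{-1}(0)$ as a $(k+1)\times(k+2)$ matrix, one sees that its first column is constantly $1$, while each remaining column contains a single $1$; thus removing any one row exposes an all-$0$ column. Consequently any $k$ rows of $w_k^{-1}(0)$ admit both a constant $\mathbf{0}$ column and a constant $\mathbf{1}$ column, yielding simultaneously $w_k\in W^k$ and the satisfaction of $\left(\{0,1\}^k\setminus\{\mathbf{1}\},\{0,1\}^k\setminus\{\mathbf{0}\}\right)$.

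The substantive work is the third inclusion. Fix $f\in\mathcal{B}^k$ of arity $n$ and $\mathcal{K}\in\mathbf{I}(W^k)$. By Proposition~\ref{prop W_k}, $\mathcal{K}=Z_k\mathcal{K}$, so it suffices to produce, for each $H\subseteq f^{-1}(0)$ with $|H|\leq k$, an $n$-ary $g\in\mathcal{K}$ with $H\subseteq g^{-1}(0)$. Assemble the elements of $H$ into a $k\times n$ matrix $N$, padding by repetitions if necessary. Since $f$ annihilates every row of $N$, the two halves of the strong constraint force respectively a constantly $\mathbf{1}$ column (at some position $i_0$) and a constantly $\mathbf{0}$ column (at some position $j_0$); hence every $\mathbf{a}\in H$ satisfies $a_{i_0}=1$ and $a_{j_0}=0$. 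The function $g(x_1,\ldots,x_n)=x_{i_0}\rightarrow x_{j_0}$, extended by $n-2$ dummy variables, has zero set $\{x\in\{0,1\}^n:x_{i_0}=1,\,x_{j_0}=0\}\supseteq H$, so it will do, provided $\rightarrow\in\mathcal{K}$.

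The proof that $\rightarrow\in\mathcal{K}$ is borrowed nearly verbatim from Proposition~\ref{prop bottom W_infty}. Since compositions of monotone functions with projections are monotone, $\mathcal{K}\subseteq M$ would propagate to $[\mathcal{K}]\subseteq M$, contradicting $[\mathcal{K}]=W^k$; so $\mathcal{K}$ itself contains a nonmonotone function $g_1$. As $\mathcal{K}\subseteq\Omega_{=}\cap W^k$ (by Lemma~\ref{lemma +imp} when $\mathcal{K}$ is not a clone; the clone case is trivial), the zero set of $g_1$ avoids both $\mathbf{0}$ and $\mathbf{1}$, so after a permutation of variables $g_1^{-1}(0)$ contains a tuple of the form $(1,\ldots,1,0,\ldots,0)$. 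The function of the same arity with exactly this single tuple as its zero set belongs to $Z_k\mathcal{K}=\mathcal{K}$---trivially, since a one-element zero set meets the $Z_k$-condition via $g_1$---and identifying its $1$-entries with one variable and its $0$-entries with another yields $\rightarrow\in\mathcal{K}$. The key subtle point is the extraction of a nonmonotone member \emph{inside} $\mathcal{K}$ rather than merely inside $[\mathcal{K}]$; apart from that, the argument is a routine transposition of the $W^\infty$ case, the substitution $Z_\infty\rightsquigarrow Z_k$ being licensed precisely by Proposition~\ref{prop W_k}.
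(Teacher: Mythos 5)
Your proof is correct and follows essentially the same route as the paper: the same triple inclusion, with the third inclusion established by extracting a nonmonotone function from the class, shrinking its zero set to a singleton to obtain the implication, and then covering any at most $k$-element subset of $f^{-1}\left(0\right)$ by a dilated implication so that $Z_{k}$-closure applies. The only (harmless) differences are that you argue $\mathcal{B}^{k}\subseteq\mathcal{K}$ for an arbitrary $\mathcal{K}\in\mathbf{I}\left(W^{k}\right)$ and use $Z_{k}$ throughout, whereas the paper works directly with $\bigcap\mathbf{I}\left(W^{k}\right)$ and invokes its $Z_{\infty}$-closure before the final $Z_{k}$ step, and that you spell out the verification of $w_{k}\in\mathcal{B}^{k}$, which the paper leaves to the reader.
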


\begin{proof}
Let $\mathcal{B}^{k}$ denote the function class strongly defined by the above
constraint. We will prove that%
\[
\bigcap\mathbf{I}\left(  W^{k}\right)  \subseteq\left\lfloor w_{k}%
\right\rfloor \subseteq\mathcal{B}^{k}\subseteq\bigcap\mathbf{I}\left(
W^{k}\right)  .
\]

The first containment follows immediately from the fact that $\left\lfloor
w_{k}\right\rfloor \in\mathbf{I}\left(  W^{k}\right)  $, and for the second
one it suffices to verify that $w_{k}\in\mathcal{B}^{k}$, since $\mathcal{B}%
^{k}$ is an idempotent according to Theorem~\ref{thm galois connection}.

For the third containment, let $f$ be an arbitrary $n$-ary function in
$\mathcal{B}^{k}$, and $N$ be an at most $k$-element subset of $f^{-1}\left(
0\right)  $. Since $f$ preserves the relation $\left\{  0,1\right\}
^{k}\setminus\left\{  \mathbf{0}\right\}  $, viewing $N$ as a matrix, it has a
constant $0$ column. Moreover, since $f$ satisfies the constraint $(\left\{
0,1\right\}  ^{k}\setminus\left\{  \mathbf{1}\right\}  ,\left\{  0,1\right\}
^{k}\setminus\left\{  \mathbf{0}\right\}  )$, the matrix $N$ has a constant
$1$ column as well. We can suppose without loss of generality that the first
column of $N$ is constant $1$ and the second column is constant $0$.

Since $\left[  \bigcap\mathbf{I}\left(  W^{k}\right)  \right]  =W^{k}$, there
is at least one nonmonotone function in $\bigcap\mathbf{I}\left(
W^{k}\right)  $, and taking into account that $\bigcap\mathbf{I}\left(
W^{k}\right)  $ is closed under the operator $Z_{\infty}$, we can apply the
same argument as in the proof of Proposition~\ref{prop bottom W_infty} to
construct a function $g_{4}\in\bigcap\mathbf{I}\left(  W^{k}\right)  $ of
arity $n$, such that $g_{4}^{-1}\left(  0\right)  =\left\{  1\right\}
\times\left\{  0\right\}  \times\left\{  0,1\right\}  ^{n-2}$. Then we have
$N\subseteq g_{4}^{-1}\left(  0\right)  $, and since we can construct such a
function $g_{4}$ for any at most $k$-element subset $N$ of $f^{-1}\left(
0\right)  $, we can conclude that $f\in Z_{k}\bigcap\mathbf{I}\left(
W^{k}\right)  =\bigcap\mathbf{I}\left(  W^{k}\right)  $.
\end{proof}

The previous two propositions yield the following characterization of the
idempotents corresponding to $W^{k}$.

\begin{theorem}
\label{thm W_k}For any class $\mathcal{K}$ of Boolean functions the following
conditions are equivalent:

\begin{enumerate}
\item $\mathcal{K}\in\mathbf{I}\left(  W^{k}\right)  $;

\item $w_{k}\in\mathcal{K}\subseteq W^{k}$, and $\mathcal{K}$ is an equational
class satisfying $Z_{k}\mathcal{K}=\mathcal{K}$.
\end{enumerate}
\end{theorem}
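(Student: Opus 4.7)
The plan is to derive this theorem as a short consequence of the two preceding propositions: Proposition~\ref{prop W_k} characterizes $\mathbf{I}(W^k)$ among equational classes generating $W^k$ by the condition $Z_k\mathcal{K}=\mathcal{K}$, and Proposition~\ref{prop bottom W_k} identifies $\lfloor w_k\rfloor$ as the bottom of $\mathbf{I}(W^k)$. The only work left is to translate the clone-theoretic condition ``$[\mathcal{K}]=W^k$'' into the more concrete bracketing ``$w_k\in\mathcal{K}\subseteq W^k$''.

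For the direction (1)$\Rightarrow$(2), I would start from $\mathcal{K}\in\mathbf{I}(W^k)$, which by definition means $\mathcal{K}$ is an idempotent equational class with $[\mathcal{K}]=W^k$. The inclusion $\mathcal{K}\subseteq[\mathcal{K}]=W^k$ is immediate, and $Z_k\mathcal{K}=\mathcal{K}$ follows at once from Proposition~\ref{prop W_k}. To obtain $w_k\in\mathcal{K}$, I would invoke Proposition~\ref{prop bottom W_k}, which gives $\lfloor w_k\rfloor=\bigcap\mathbf{I}(W^k)\subseteq\mathcal{K}$, and in particular $w_k\in\mathcal{K}$.

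For the direction (2)$\Rightarrow$(1), suppose $w_k\in\mathcal{K}\subseteq W^k$, $\mathcal{K}$ is an equational class, and $Z_k\mathcal{K}=\mathcal{K}$. The only hypothesis of Proposition~\ref{prop W_k} still to verify is $[\mathcal{K}]=W^k$. The inclusion $[\mathcal{K}]\subseteq W^k$ follows from $\mathcal{K}\subseteq W^k$ together with the fact that $W^k$ is a clone. For the reverse inclusion I would use that $w_k$ generates $W^k$ (this is precisely the content of the footnote to the definition of $W^k$ in Subsection~\ref{subsect Post lattice}), so $W^k=[w_k]\subseteq[\mathcal{K}]$. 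With $[\mathcal{K}]=W^k$ in hand, Proposition~\ref{prop W_k} yields $\mathcal{K}\in\mathbf{I}(W^k)$.

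In short, there is no real obstacle: the theorem is a bookkeeping consequence of the two previous propositions, and the only point requiring a moment's attention is the fact that $w_k$ is a single generator of $W^k$, which has already been recorded. The substantive difficulties were absorbed into Proposition~\ref{prop W_k} (the technical lemma about $Z_l\mathcal{K}\subseteq Z_{l+1}(W^l\circ\mathcal{K})$, allowing one to pass from $Z_\infty$ to $Z_k$) and Proposition~\ref{prop bottom W_k} (the construction of $g_4$ witnessing that any prescribed zero pattern lies below some function in $\bigcap\mathbf{I}(W^k)$).
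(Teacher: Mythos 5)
Your proposal is correct and matches the paper's (implicit) argument: the paper derives Theorem~\ref{thm W_k} as an immediate consequence of Propositions~\ref{prop W_k} and~\ref{prop bottom W_k}, exactly as you do, with the translation between $[\mathcal{K}]=W^k$ and $w_k\in\mathcal{K}\subseteq W^k$ resting on the recorded fact that $w_k$ is a single generator of $W^k$. No gaps.
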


Finally, we prove that the intervals $\mathbf{I}\left(  W^{k}\right)  $ are
finite, but their sizes do not have a common upper bound.

\begin{theorem}
For any $k\geq2$, the interval $\mathbf{I}\left(  W^{k}\right)  $ is finite
and has at least $k+1$ elements.
\end{theorem}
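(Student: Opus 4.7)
The plan is to prove finiteness and the lower bound $|\mathbf{I}(W^k)|\ge k+1$ separately. For finiteness I will invoke Theorem~\ref{thm W_k}: any $\mathcal{K}\in\mathbf{I}(W^k)$ is an equational class satisfying $Z_k\mathcal{K}=\mathcal{K}$, so $\mathcal{K}$ is determined by the behaviour of its members on $k$-element subsets of their zero sets. To each such subset $H\subseteq\{0,1\}^n$, viewed as a $k\times n$ matrix, I associate its column set $C(H)\subseteq\{0,1\}^k$. Introducing dummy variables enlarges $C(H)$ and identifying variables collapses repeated columns, so every $k$-subset is equivalent, for the purposes of an idempotent class, to the canonical $k$-subset of arity $|C(H)|$ determined by $C(H)$. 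Writing $\mathcal{T}(\mathcal{K})\subseteq 2^{\{0,1\}^k}$ for the collection of column sets that appear in some $k$-subset of some $f^{-1}(0)$ with $f\in\mathcal{K}$, I will verify using $Z_k\mathcal{K}=\mathcal{K}$ that $f\in\mathcal{K}$ iff every $k$-subset of $f^{-1}(0)$ has its column set in $\mathcal{T}(\mathcal{K})$. Consequently $\mathcal{K}\mapsto\mathcal{T}(\mathcal{K})$ is injective, and since $2^{2^{\{0,1\}^k}}$ is finite, $\mathbf{I}(W^k)$ is finite.

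For the lower bound I build an explicit chain of length $k+1$ using relational constraints. For $j\in\{0,1,\ldots,k-1\}$ let $R_j=\{c\in\{0,1\}^k:w(c)\le j\}$, where $w(c)$ denotes the Hamming weight, and set $Q=\{0,1\}^k\setminus\{\mathbf{0}\}$; let $\mathcal{K}_j$ be the class strongly defined by the single constraint $(R_j,Q)$. By Theorem~\ref{thm galois connection} each $\mathcal{K}_j$ is a composition-closed equational class. The case $j=k-1$ recovers the constraint of Proposition~\ref{prop bottom W_k}, so $\mathcal{K}_{k-1}=\mathcal{B}^k$; in particular every $\mathcal{K}_j$ contains $\mathcal{B}^k$ and is contained in $W^k$, so $[\mathcal{K}_j]=W^k$ and $\mathcal{K}_j\in\mathbf{I}(W^k)$. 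Since $R_0\subsetneq R_1\subsetneq\cdots\subsetneq R_{k-1}$, we obtain the candidate chain $W^k\supseteq\mathcal{K}_0\supseteq\mathcal{K}_1\supseteq\cdots\supseteq\mathcal{K}_{k-1}$ of $k+1$ entries.

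The remaining task, and the main combinatorial obstacle, is to show every inclusion is strict. The top step $W^k\supsetneq\mathcal{K}_0$ follows from $\operatorname{id}\in W^k\setminus\mathcal{K}_0$. For $j\in\{0,\ldots,k-2\}$ I plan to exhibit a witness $f_j\in\mathcal{K}_j\setminus\mathcal{K}_{j+1}$: let $f_j$ be the Boolean function whose coordinates are indexed by the $(j+1)$-subsets $S\subseteq\{1,\ldots,k\}$ together with one extra coordinate $*$, and whose zero set is $\{b_1,\ldots,b_k\}$ where $b_i^{(S)}=1$ iff $i\in S$ and $b_i^{(*)}=0$. The $*$-column is constantly $0$ on any multi-row sample from $f_j^{-1}(0)$, so $f_j$ preserves $Q$; using $(b_1,\ldots,b_k)$ as the rows yields $S$-columns of weight exactly $j+1$, witnessing $f_j\notin\mathcal{K}_{j+1}$. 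For $f_j\in\mathcal{K}_j$ I need that whenever multiplicities $(c_1,\ldots,c_k)$ of nonnegative integers satisfy $\sum c_i=k$, some $S$-column has weight $\sum_{i\in S}c_i>j$. A short averaging argument gives this: the top $j+1$ of the $c_i$'s have average at least $k/k=1$, so their sum is at least $j+1$. This pigeonhole step is elementary but is the decisive computation of the proof; once it is in place, the rest is bookkeeping.
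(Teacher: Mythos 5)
Your argument is correct in substance, with one repair needed in the finiteness half and a genuinely different route in the lower-bound half.

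The finiteness part is essentially the paper's own proof: the paper attaches to $\mathcal{K}$ its \emph{skeleton}, the collection of matrices obtained from at most $k$ rows of $f^{-1}(0)$ (over all $f\in\mathcal{K}$) after deleting repeated columns, and uses $Z_k\mathcal{K}=\mathcal{K}$ to show the skeleton determines $\mathcal{K}$; your $\mathcal{T}(\mathcal{K})$ is the same invariant, and your dummy-variable/identification bookkeeping is the same reconstruction. However, you must record column sets of \emph{at most} $k$-element subsets (as in the definition of $Z_k$), not only of exactly $k$-element ones. As literally stated, your criterion ``$f\in\mathcal{K}$ iff every $k$-subset of $f^{-1}(0)$ has its column set in $\mathcal{T}(\mathcal{K})$'' is vacuously satisfied by any $f$ with $\left\vert f^{-1}(0)\right\vert <k$, so it would, for instance, place $\operatorname{id}$ (whose zero set is a singleton) into $\mathcal{B}^{k}$, which is false. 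With ``at most $k$'' throughout, the criterion and the injectivity of $\mathcal{K}\mapsto\mathcal{T}(\mathcal{K})$ go through exactly as you sketch.

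The lower bound is where you diverge from the paper, which uses the chain $W^{k}\supset W_{=}^{k}\supset\mathcal{B}^{2}\cap W^{k}\supset\cdots\supset\mathcal{B}^{k}$ (varying the \emph{arity} of the constraint) and separates consecutive terms by the single witness $v_{j}(x_{1},\ldots,x_{j+2})=w_{j}(\lnot x_{1},\ldots,\lnot x_{j+2})$. You instead keep the arity fixed at $k$ and vary the antecedent relation over Hamming balls $R_{j}$. Your construction checks out: $\mathcal{K}_{k-1}=\mathcal{B}^{k}$; each $\mathcal{K}_{j}$ is squeezed between $\mathcal{B}^{k}$ and $W^{k}$ and hence generates $W^{k}$; the matrix with rows $b_{1},\ldots,b_{k}$ is an $R_{j+1}$-matrix mapped to $\mathbf{0}$ by $f_{j}$; and the averaging step is sound, since the largest $j+1$ of the multiplicities $c_{1},\ldots,c_{k}$ (which sum to $k$ over $k$ indices) have average at least $1$ and hence sum at least $j+1>j$, so no $R_{j}$-matrix can have all its rows in $f_{j}^{-1}(0)$. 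Together with $\operatorname{id}\in W^{k}\setminus\mathcal{K}_{0}$ this gives $k+1$ distinct elements. The trade-off: the paper's witnesses have arity only $j+2$ versus your $\binom{k}{j+1}+1$, and its chain members are the classes $\mathcal{B}^{j}\cap W^{k}$ that reappear in Figure~\ref{fig intervals}, whereas your version keeps every constraint at the single arity $k$ and reduces all strictness claims to one pigeonhole computation.
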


\begin{proof}
To obtain the lower bound, let us observe that for any $j,k\geq2$ we have
$\mathcal{B}^{j}\cap W^{k}\supseteq\mathcal{B}^{j+1}\cap W^{k}$, and this
containment is proper, since $v_{j}\in\mathcal{B}^{j}\cap W^{k}$ and
$v_{j}\notin\mathcal{B}^{j+1}\cap W^{k}$, where $v_{j}\left(  x_{1}%
,\ldots,x_{j+2}\right)  =w_{j}\left(  \lnot x_{1},\ldots,\lnot x_{j+2}\right)
$. Thus we have the following chain of length $k+1$ in $\mathbf{I}\left(
W^{k}\right)  $ (see Figure~\ref{fig intervals}):%
\[
W^{k}\supset W_{=}^{k}\supset\mathcal{B}^{2}\cap W^{k}\supset\mathcal{B}%
^{3}\cap W^{k}\supset\cdots\supset\mathcal{B}^{k}\cap W^{k}=\mathcal{B}^{k}.
\]%
\begin{figure}
[ptb]
\begin{center}
\includegraphics[
height=7.9303cm,
width=9.8386cm
]%
{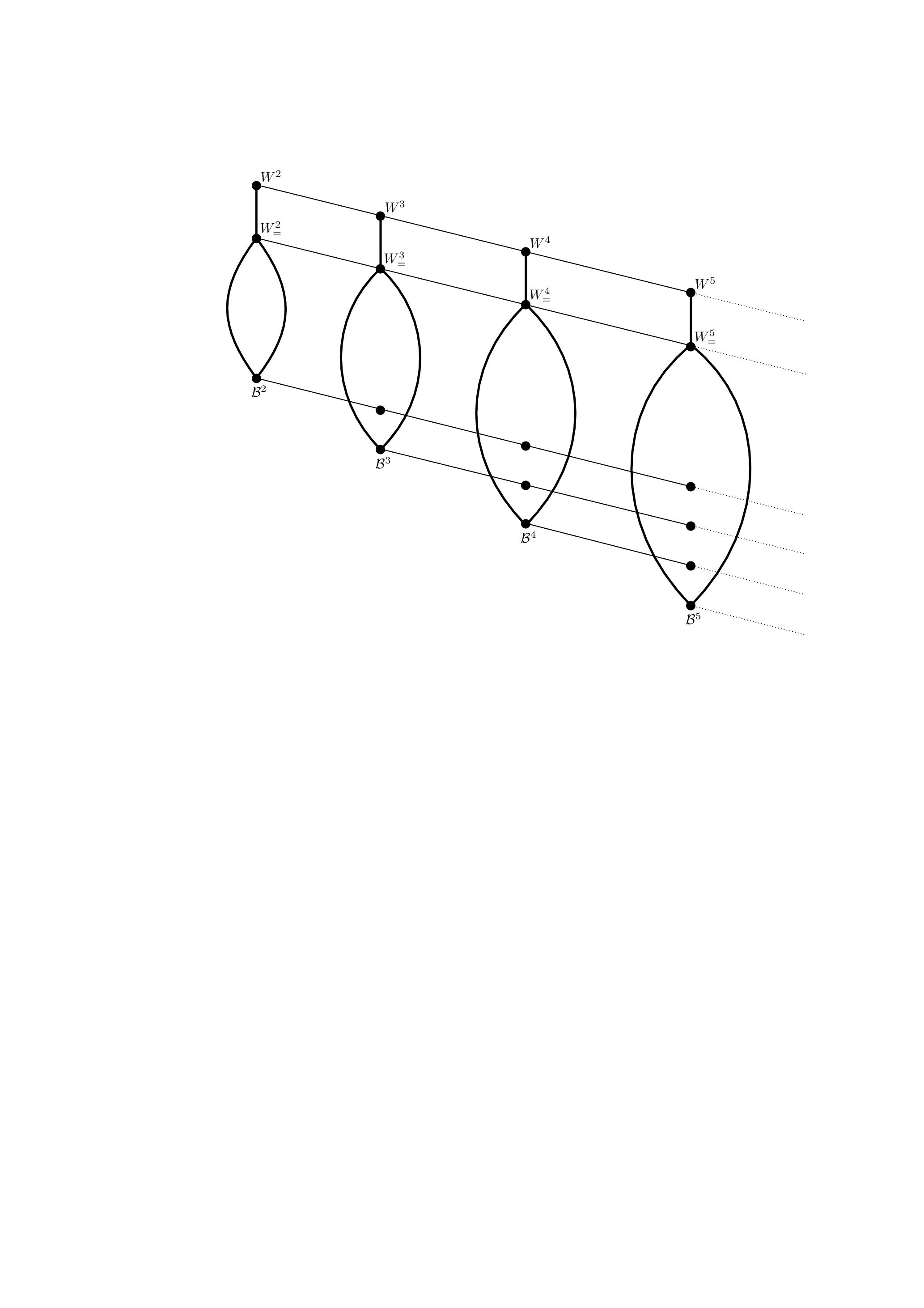}%
\caption{The intervals $\mathbf{I}\left(  W^{k}\right)  $}%
\label{fig intervals}%
\end{center}
\end{figure}

In order to prove the finiteness of $\mathbf{I}\left(  W^{k}\right)  $, we
define the \emph{skeleton} of an idempotent class $\mathcal{K}\in
\mathbf{I}\left(  W^{k}\right)  $ as follows. For every $f\in\mathcal{K}$, let
us construct all matrices formed by at most $k$ rows of the matrix
$f^{-1}\left(  0\right)  $, and delete repeated columns, if there are any. The
skeleton of $\mathcal{K}$ is the collection of all such matrices. Every matrix
in the skeleton has at most $k$ rows, and at most $2^{k}$ columns, since there
are no repeated columns. There are only finitely many such matrices, hence
there are only finitely many possible skeletons. Therefore, it suffices to
prove that different idempotents in $\mathbf{I}\left(  W^{k}\right)  $ have
different skeletons.

So let us suppose that $\mathcal{K}_{1},\mathcal{K}_{2}\in\mathbf{I}\left(
W^{k}\right)  $ have the same skeleton $\mathcal{S}$. Let $f_{1}$ be any
function in $\mathcal{K}_{1}$, and let $N_{1}\subseteq f_{1}^{-1}\left(
0\right)  $ be any set with at most $k$ elements. Deleting repeated columns of
the matrix $N_{1}$, we obtain a matrix $N^{\prime}\in\mathcal{S}$. Since
$\mathcal{S}$ is the skeleton of $\mathcal{K}_{2}$ as well, there exists a
function $f_{2}\in\mathcal{K}_{2}$ and a matrix $N_{2}$ formed by at most $k$
rows of $f_{2}^{-1}\left(  0\right)  $, such that deleting repeated columns of
$N_{2}$ we obtain the same matrix $N^{\prime}$. Identifying variables of
$f_{2}$ we can construct a function $g_{2}\in\mathcal{K}_{2}$ such that
$g_{2}^{-1}\left(  0\right)  \supseteq N^{\prime}$. Now adding dummy variables
to $g_{2}$, we can construct a function $h_{2}\in\mathcal{K}_{2}$ such that
$h_{2}^{-1}\left(  0\right)  \supseteq N_{1}$. Since we can do this for any at
most $k$-element subset $N_{1}$ of $f_{1}^{-1}\left(  0\right)  $, we can
conclude that $f_{1}\in Z_{k}\mathcal{K}_{2}=\mathcal{K}_{2}$. This proves
that $\mathcal{K}_{1}\subseteq\mathcal{K}_{2}$, and a similar argument yields
$\mathcal{K}_{1}\supseteq\mathcal{K}_{2}$, thus we have $\mathcal{K}%
_{1}=\mathcal{K}_{2}$.
\end{proof}

\section{Concluding remarks\label{sect concluding}}

Assembling the results of the previous sections, we can draw the lattice of
projection-free idempotents as shown in Figure~\ref{fig lattice}. The bottom
of the interval $\mathbf{I}\left(  U^{k}\right)  $ is denoted by
$\mathcal{D}^{k}$; it is the dual of $\mathcal{B}^{k}$, hence it can be
strongly defined by the relational constraint $(\left\{  0,1\right\}
^{k}\setminus\left\{  \mathbf{0}\right\}  ,\left\{  0,1\right\}  ^{k}%
\setminus\left\{  \mathbf{1}\right\}  )$ for finite $k$, and by all of these
constraints for $k=\infty$. To obtain the whole lattice $\mathbf{I}$, we have
to put together this lattice with the Post lattice, as shown schematically in
Figure~\ref{fig biglattice}. For the \textquotedblleft real
picture\textquotedblright\ we would have to connect $\mathcal{C}_{=}$ to
$\mathcal{C}$ for each nontrivial interval $\mathbf{I}\left(  \mathcal{C}%
\right)  $, but this would make the figure incomprehensibly complex.%

\begin{figure}
[t]
\begin{center}
\includegraphics[
height=8.2728cm,
width=11.8437cm
]%
{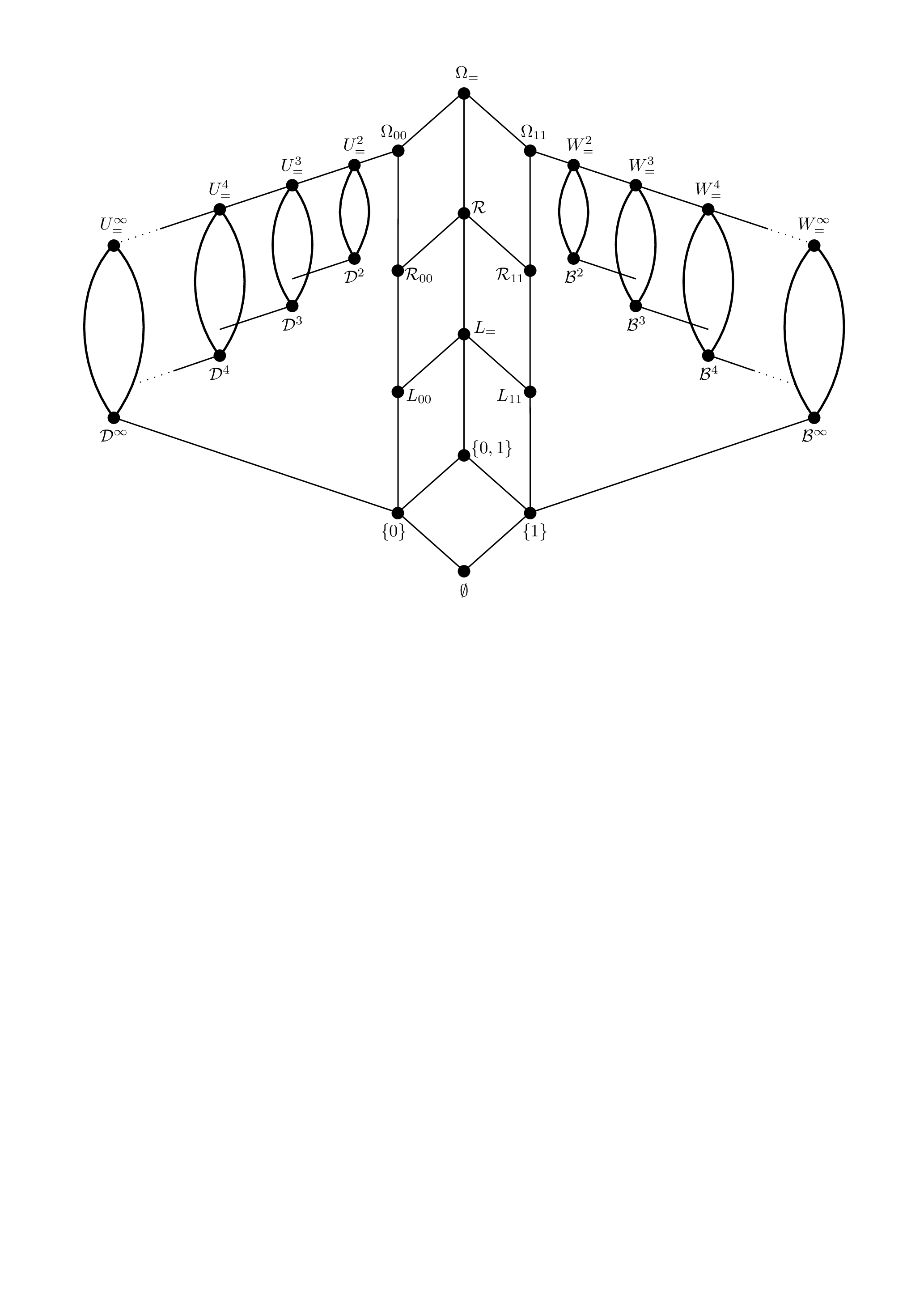}%
\caption{The lattice of closed classes without projections}%
\label{fig lattice}%
\end{center}
\end{figure}

\begin{figure}
[t]
\begin{center}
\includegraphics[
height=8.3584cm,
width=11.8414cm
]%
{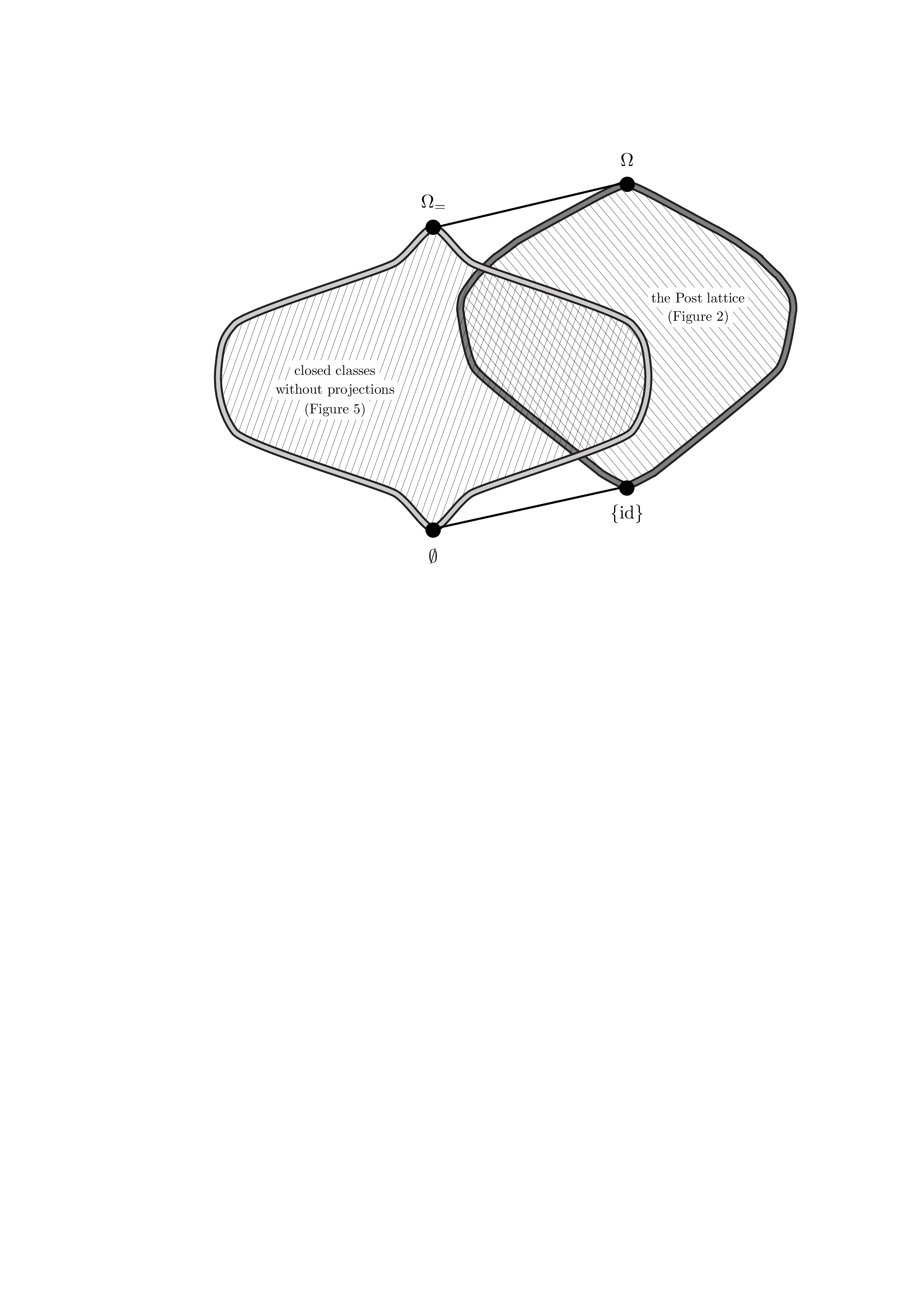}%
\caption{The lattice $\mathbf{I}$ of composition-closed equational classes of
Boolean functions}%
\label{fig biglattice}%
\end{center}
\end{figure}

Finally, let us mention a few directions for further investigations. Our
characterization of the intervals $\mathbf{I}\left(  W^{\infty}\right)  $ and
$\mathbf{I}\left(  W^{k}\right)  $ is not explicit, hence a more concrete
description would be desirable. In particular, it would be interesting to
determine (at least asymptotically) the size of $\mathbf{I}\left(
W^{k}\right)  $. To better understand the structure of $\mathbf{I}\left(
W^{\infty}\right)  $, the quasiorder $\sqsubseteq$ defined in
Remark~\ref{rem ideal} should be studied. Although the lattice of clones over
a base set with at least three elements is not fully described, it may be
possible to get some results about composition-closed equational classes over
arbitrary finite domains, e.g., determine minimal and maximal closed classes.
The description of $\mathbf{I}$ obtained in this paper can be regarded as a
first step in the study of the semigroup $\left(  \mathbf{E};\circ\right)  $;
for further results in this direction see \cite{ACW}.

\section*{Acknowledgments}

The present project is supported by the National Research Fund, Luxembourg,
and cofunded under the Marie Curie Actions of the European Commission
(FP7-COFUND), and supported by the Hungarian National Foundation for
Scientific Research under grants no. {T48809, K60148 and }K77409. The author
would like express his gratitude to Miguel Couceiro, P\'{e}ter Hajnal, Erkko
Lehtonen and the anonymous referee for helpful comments and suggestions.


\begin{thebibliography}{99}                                                                                               %


\bibitem {ACW}J.~Almeida, M.~Couceiro, T.~Waldhauser, \textit{On the
topological semigroup of equational classes of finite functions under
composition,} manuscript.

\bibitem {BKKR}V. G. Bodnar\v{c}uk, L. A. Kalu\v{z}nin, V. N. Kotov, B. A.
Romov, \textit{Galois theory for Post algebras I-II, }(Russian), Kibernetika
(Kiev) \textbf{3} (1969), 1--10; \textbf{5} (1969), 1--9. Translated in
Cybernetics and Systems Analysis \textbf{3} (1969), 243--252; \textbf{5}
(1969), 531--539.

\bibitem {BurSan}S.~Burris, H.~P.~Sankappanavar, \textit{A Course in Universal
Algebra,} Graduate Texts in Mathematics 78, Springer-Verlag, New York (1981)

\bibitem {C}M.~Couceiro, \textit{On the lattice of equational classes of
Boolean functions and its closed intervals,} Journal of Multiple-Valued Logic
and Soft Computing \textbf{18} (2008), 81--104.

\bibitem {composition of post classes}M. Couceiro, S. Foldes, E. Lehtonen,
\textit{Composition of Post classes and normal forms of Boolean functions,}
Discrete Math. \textbf{306} (2006), 3223--3243.

\bibitem {CF}M. Couceiro, S. Foldes, \textit{Functional equations,
constraints, definability of function classes, and functions of Boolean
variables,} Acta Cybernet. \textbf{18} (2007), 61--75.

\bibitem {quasi-ordering}M. Couceiro, M. Pouzet, \textit{On a quasi-ordering
on Boolean functions,} Theoret. Comput. Sci. \textbf{396} (2008), 71--87.

\bibitem {EFHH}O. Ekin, S. Foldes, P. Hammer, L. Hellerstein,
\textit{Equational characterizations of Boolean function classes,} Discrete
Math. \textbf{211} (2000), 27--51.

\bibitem {G}D. Geiger, \textit{Closed systems and functions of predicates,
}Pacific J. Math. \textbf{27} (1968), 95--100.

\bibitem {H}W.~Harnau, \textit{A generalized concept of relations for the
algebra of many-valued logic. I-III,} (German), Rostock. Math. Kolloq.
\textbf{28} (1985), 5--17; \textbf{31} (1987), 11--20; \textbf{32} (1987), 15--24.

\bibitem {JM}Ju. I. Janov, A. A. Mu\v{c}nik, \textit{Existence of }%
$k$\textit{-valued closed classes without a finite basis,} (Russian), Dokl.
Akad. Nauk SSSR \textbf{127} (1959), 44--46.

\bibitem {Lau}D.~Lau, \textit{Function algebras on finite sets, }Springer
Monographs in Mathematics, Springer-Verlag, Berlin, 2006.

\bibitem {M}A.~I.~Mal'cev, \textit{Iterative algebras and Post varieties,}
(Russian), Algebra i Logika \textbf{5} (1966), 5--24.

\bibitem {Pip}N. Pippenger, \textit{Galois theory for minors of finite
functions,} Discrete Math. \textbf{254} (2002), 405--419.

\bibitem {Post}E. L. Post, \textit{The two-valued iterative systems of
mathematical logic}, Annals of Mathematics Studies, no. 5, Princeton
University Press, Princeton, N. J., 1941.

\bibitem {PK}R.~P\"{o}schel, L.~A.~Kalu\v{z}nin, \textit{Funktionen- und
Relationenalgebren, }Mathematische Monographien, VEB Deutscher Verlag der
Wissenschaften, Berlin, 1979. (German)
\end{thebibliography}
\end{document}